\newcolumntype{C}[1]{>{\centering\let\newline\\\arraybackslash\hspace{0pt}}m{#1}}
\renewcommand\appendix{\par
\setcounter{section}{0}%
\setcounter{subsection}{0}%
\setcounter{table}{0}
\setcounter{table}{0}
\setcounter{figure}{0}
\gdef\thetable{\Alph{table}}
\gdef\thefigure{\Alph{figure}}
%\section*{Appendix}
\gdef\thesection{\Alph{section}}
\setcounter{section}{0}}
\newtheorem{theorem}{Theorem}[section]
\newtheorem{lemma}[theorem]{Lemma}
\newtheorem{corollary}[theorem]{Corollary}
\newtheorem{definition}[theorem]{Definition}
\newcommand{\corr}[1]{\textcolor{black}{#1}}
\newcommand{\corrr}[1]{\textcolor{red}{#1}}
\newcommand{\Ex}{{\mathbb{E}}_x}
\newcommand{\pp}{^{\prime\prime}}
\newcommand{\phiqr}{\phi_{\gamma+\delta}}
\newcommand{\Wq}{W_{\delta}}
\newcommand{\Zq}{Z_{\delta}}
\newcommand{\Zqr}{Z_{\gamma,\delta}}
\newcommand{\Wqr}{W_{\gamma+\delta}}
\newcommand{\Wb}{\overline{W}_{\gamma+\delta}}
\newcommand{\Wbb}{\overline{\overline{W}}_{\gamma+\delta}}
\newcommand{\Wa}{W_{\gamma,\delta,a}}
\newcommand{\WB}{W_{\gamma,\delta,b}}
\newcommand{\Ma}{\mathscr{M}^{(\gamma,\delta)}_a}
\newcommand{\T}{{T_{E_B}}}
\newcommand{\pis}{{\pi_{b_u,b_l}^{\kappa,s}}}
\newcommand{\pio}{{\pi_{b_u,b_l}^{\kappa,*}}}
\newcommand{\Vs}{{V_s}}
\newcommand{\Vo}{{V_*}}
\newcommand{\Voo}{V_\kappa^\prime(0;\pi^{\kappa,s}_{b_u,0})}
\begin{document}

\begin{frontmatter}

\title{Optimal periodic dividend strategies for spectrally negative L\'evy processes with fixed transaction costs}

\author[UMelb]{Benjamin Avanzi}
\ead{b.avanzi@unimelb.edu.au}

\author[UNSW]{Hayden Lau\corref{cor}}
\ead{kawai.lau@unsw.edu.au}

\author[UNSW]{Bernard Wong}
\ead{bernard.wong@unsw.edu.au}

\cortext[cor]{Corresponding author.}% Tel.: +1 514 343 6695; fax: +1 514 343 5700.}

\address[UMelb]{Centre for Actuarial Studies, Department of Economics \\ University of Melbourne VIC 3010, Australia}
\address[UNSW]{School of Risk and Actuarial Studies, UNSW Australia Business School\\ UNSW Sydney NSW 2052, Australia}
%\address[UCop]{Department of Mathematical Sciences, University of Copenhagen\\ DK-2100 Copenhagen {\O}, Denmark}

\begin{abstract}

Maximising dividends is one classical stability criterion in actuarial risk theory. Motivated by the fact that dividends are paid periodically in real life, \emph{periodic} dividend strategies were  recently introduced \citep*{AlGeSh11}. In this paper, we incorporate fixed transaction costs into the model and study the optimal periodic dividend strategy with fixed transaction costs for spectrally negative L\'evy processes. 

The value function of a periodic $(b_u,b_l)$ strategy is calculated by means of exiting identities and It\^o's excusion when the surplus process is of unbounded variation. %It is known that the shape of the scale function plays an important role for barrier types of strategy to be optimal. 
We show that a sufficient condition for optimality is that the L\'evy measure admits a density which is completely monotonic. Under such assumptions, a periodic $(b_u,b_l)$ strategy is confirmed to be optimal.

Results are illustrated.

\end{abstract}

\begin{keyword}
Optimal periodic dividends \sep SNLP \sep Fixed transaction costs

JEL codes: %http://www.aeaweb.org/journal/jel_class_system.php
C44 \sep %Statistical Decision Theory; Operations Research
C61 \sep %Optimization Techniques; Programming Models; Dynamic Analysis
G24 \sep %Investment Banking; Venture Capital; Brokerage; Ratings and Ratings Agencies
G32 \sep %Financing Policy; Financial Risk and Risk Management; Capital and Ownership Structure
G35 %\sep %Payout Policy

%Subject Categories: %http://www.elsevier.com/authored_subject_sections/S04/misc/subject_cat.htm
%IM13 \sep %ruin and other stability criteria
%IE50 %\sep %finance, general and miscellaneous
%IE53 \sep investment

\end{keyword}

\end{frontmatter}

\newtheorem{remark}{Remark}[section]
\numberwithin{equation}{section}

\section{Introduction} \label{S_intro}

The first to study the now so-called ``stability problem'' were \citet*{Lun09,Cra30}, with the traditional stability criterion being the probability of ruin \citep*{Buh70,Ger72}. A major criticism of this set-up is that companies do not let their surplus grow to infinity (as the probability of ruin criterion suggests they should), and they do distribute profits to their beneficiaries eventually. This means that the ruin probability calculated does not actually represent the probability of ruin of the company (even with infinite horizon) - the calculations are flawed \citep*[as first argued by][]{deF57}. In addition, making decisions based on probability of ruin does not capture the risk and reward trade-off which companies typically face. Because of this, Bruno \citet*{deF57} first introduced an alternative formulation where distribution of surplus, or `dividends', is allowed and the stability criterion is the maximised expected present value of dividends paid until ruin. This formulation is arguably more balanced, as neither paying too much nor too little will maximise the dividends. A strategy that maximises dividends is called an optimal strategy and the form of an optimal strategy is of particular interest. Since then, the optimal dividend problem for an insurance company has been studied intensively \citep*[see, for instance,][]{Ava09,AlTh09}. 

While this literature does not belong to corporate finance, and does not really mean to (directly) inform companies how one should pay dividends in real life, the qualitative results we obtain from the modelling can only be improved by making the dividend strategies more realistic \citep*[see][for a formal discussion of what `realistic' means in this context]{AvTuWo16c}. 

In this spirit, periodic dividend strategies were introduced by \cite*{AlChTh11a} and have caught some recent attention as they capture the periodicity of dividend payments in real life. A periodic dividend strategy refers to the scenario when dividends can only be paid at some ``specified'' times. One motivation of this setting is that companies typically distribute (part of) their surplus to shareholders (as dividends or share buy-backs, for instance) at specific times in a year. Unfortunately, paying dividends at deterministic times introduces technical difficulties as one needs to keep track of the time until the next payment time. However, an \textit{Erlang}-$n$ random variable can be used as an approximation to a deterministic constant. This technique was first used in ruin theory by \citet*{AsAvUs02} to approximate the probability of ruin in finite time. The same technique was subsequently introduced in the dividend setting by \citet*{AlChTh11a}. They considered the case when the dividend payment times (also called ``decision times'' as the dividends are ``decided'' and paid instantly at those times) are random variables and the solvency of the company is also considered at that same period. This means that a negative surplus is possible as long as it reverts to a non-negative value at the next observation time. This is related to the concept of Parisian (soft) ruin, where it is argued that companies do not go bankrupt instantaneously and may be able to recover before bankruptcy. \citet*{AvChWoWo13} studied periodic barrier strategies with continuous monitoring of solvency, that is, when ruin happens as soon as the surplus hits 0 (the assumption in this paper). 

In this paper, we determine the optimal periodic dividend strategy under spectrally negative L\'evy process, in presence of \emph{fixed} transaction costs (see Remark \ref{Remark1.1} below). Here, transaction costs refer to the costs of transferring the surplus of the company to the shareholders. This includes both explicit components (e.g., tax and administrative costs), but also potentially implicit components (e.g., opportunity costs, penalty if it is undesirable to pay too often). The studies of optimal dividend strategies under fixed transaction costs have been done in `continuous' decision making models \citep*[`continuous' here is mentioned as opposed to `periodic'; see also][for a discussion of the interaction between periodic and continuous dividend decisions]{AvTuWo16}; see for example \citet*[in the Brownian model]{JeSh95}, \citet*[\corr{for spectrally negative L\'evy processes}]{Loe08a}, \citet*[in the dual model]{BaKyYa13} and \citet*[with taxes]{ChYuWa20}. Although inspired by the Erlang-$n$ technique, we only consider the case when $n=1$ in this paper to enable neat expressions and formula, see \citet*{AlIvZh16}. This means that a dividend decision time is activated when a Poisson process jumps.

\begin{remark}\label{Remark1.1}
	Transaction costs are typically comprised of two components, proportional costs and fixed costs. For example, if a dividend amount of $\xi$ is paid, the cost is $\rho\xi+\kappa$, where $\corr{\kappa}\geq 0$ and $1>\rho\geq 0$. Note that the complexity of the problem does not increase with the presence of proportional cost $\rho$ (e.g. tax) because it can be removed by scaling the risk metric (or currency) with a ratio of $(1-\rho)^{-1}$ and considering another fixed cost $\kappa'$. Hence, we can assume without loss of generality that there are no proportional costs. 
\end{remark}

L\'evy processes \citep*{Ber98} encompass a wide class of models present in the literature, including the Cram\'er-Lundberg \citep*{Ger69}, Brownian \citep*{Ger70}, and dual models \citep*{AvGeSh07}. When there are no positive jumps---models are then referred to as `spectrally negative'---fluctuation theory takes the nicest form and various quantities can be expressed explicitly in terms of scale functions $W_q$ \citep*[see][p. 239, for a remark regarding their historical development]{Kyp14}.

Thanks to the recent development in the theories regarding spectrally negative L\'evy processes, the Cram\'er-Lundberg model is often extended and studied as a spectrally negative L\'evy process. \corr{Classical examples of spectrally negative L\'evy processes include Brownian motion with drift, Cram\'er-Lundberg risk processes and $\alpha$-stable processes with stability parameter $\alpha\in(1,2)$. More recently, a new family of examples known as Gaussian Tempered Stable Convolution class has been derived in \citet*{HuFrKy08} \citep*[See also][]{Loe08a,Loe09,Loe09b,AvPaPi07,WaZh18,WaXu20,XuWaGa20}.}

For spectrally negative L\'evy processes (``SNLP''), it is known that in general barrier types of strategy are not necessarily optimal \citep*[e.g.,][]{Ger69,AzMu05}. It is also observed in \citet*{AvPaPi07} that the shape of the scale function $W_q$ plays an important role in the optimal dividend problem for spectrally negative L\'evy process. In particular, one sufficient condition for the barrier type of strategy to be optimal is that the L\'evy measure has a completely monotonic density \corr{(e.g. Cram\'er-Lundberg under certain conditions \corrr{such as with hyper exponentially distributed jumps,}  or (one-sided) tempered stable processes as discussed above); see} \citet*{Loe08}. Under such assumption, \citet*{NoPeYaYa17} recently proved that a periodic barrier strategy is optimal when the surplus is a spectrally negative L\'evy process. Extending \citet*{Loe08} and \citet*{NoPeYaYa17}, we show that a periodic $(b_u,b_l)$ strategy is also optimal under SNLP with the same assumption on the L\'evy measure, when fixed transaction costs on dividends are present. \corr{Sometimes, forced capital injection is used to replace the sufficient condition on the L\'evy process, see e.g. \citet*{KuSc08}}.

The paper is organised as follows. In Section \ref{section.the.model}, the Mathematical model is introduced. Following that, Sections \ref{section.definition.scale.function} and \ref{section.additional.assumption} briefly review some results in fluctuation theory for L\'evy processes, and the well-known sufficient optimality result in the literature, respectively. A verification lemma is then presented in Section \ref{section.verification.snlp}. In Sections \ref{section.value.choice.snlp}-\ref{section.optimal.snlp}, a candidate strategy is constructed and proved to be optimal. Convergence results for $\kappa\downarrow 0$ is shown in Section \ref{S.conv.kappa}. Section \ref{section.numerical} illustrates and Section \ref{section.conclusion} concludes.

\section{The model}\label{section.the.model}
In this paper we use the standard set-up for stochastic processes \citep*[e.g.][Chapter O]{Ber98}. A spectrally negative L\'evy process on the real line $Y=\{Y(t);t\geq 0\}$ is defined through its characteristic exponent, i.e.
\begin{equation}\label{def.snlp.1}
\mathbb{E}[e^{\theta Y(t)}]=e^{t\psi_Y(\theta)}
\end{equation}
and
\begin{equation}\label{def.snlp.2}
\psi_Y(\theta)= c\theta+\frac{\sigma^2}{2}\theta^2+\int_{(-\infty,0)}(e^{\theta s}-1-\theta s 1_{\{s>-1\}})\corr{\Upsilon}(ds),
\end{equation}
with
\begin{equation}\label{def.snlp.3}
\int_{(-\infty,0)}(1\wedge z^2)\corr{\Upsilon}(dz)<\infty,
\end{equation}
where $(c,\sigma,\corr{\Upsilon})$ are the L\'evy triplet of $Y$. In order to avoid trivial cases, we also require that $Y$ does not have a monotonic path. In this paper, we will use $\mathbb{P}_x$ and $\mathbb{E}_x$ to denote the probability measure and expectation for quantities for $X:=x+Y$ instead of $Y$, for $x\in\mathbb{R}$. For example $\mathbb{P}_x(X\in B):=\mathbb{P}(x+Y\in B)$ for a ``measurable'' set $B$. Note in particular we have ($\mathbb{P}$-a.s.) $X(0)=x$.

Periodic dividend decision (payment) times, or in short decision times, are the times when the Poisson process (independent of $X$) with rate $\gamma$, $N_\gamma(t)$, jumps from $i-1$ to $i$, i.e. the set $\mathbb{T}=\{T_i,i\in\mathbb{N}\}$ with
\begin{equation}
T_i=\inf\{t\geq 0: N_\gamma(t)=i | N_\gamma(0)=0\}.
\end{equation}
Let $\mathbb{F}:=\{\mathscr{F}(t);t\geq 0\}$ be the filtration generated by the process $(X,N_\gamma)$. Then, a periodic dividend strategy $\pi:=\{D^\pi(t);t\geq 0\}$ is a non-decreasing, right-continuous and $\mathbb{F}$-adapted process where the cumulative amount of dividends $D^\pi=\{D^\pi(t);t\geq 0\}$ admits the form
\[ D^\pi(t) = \int_{[0,t]}\nu^\pi(s)dN_\gamma(s),~t\geq 0,\quad D^\pi(0)=0.\]

Hence, the dividend amount paid at $T_i$ is $\xi^\pi_i:=\nu^\pi(T_i)$ (the increment of $D^\pi$ at $T_i$) and the strategy $\pi$ can also be specified in terms of $\{\xi^\pi_i;i\in\mathbb{N}\}$. The modified surplus $X^\pi=\{X^\pi(t);t\geq 0\}$ is defined as
\begin{align}
X^\pi(t)=X(t)-D^\pi(t)
\end{align}
and the ruin time $\tau^\pi$ is defined as
\begin{align}
\tau^\pi=\inf\{t\geq 0:X^\pi(t)< 0\},
\end{align} 
with the convention
\begin{equation}
\inf\emptyset =\infty.
\end{equation}

We now introduce some constraints for a periodic dividend strategy to be admissible. Since we are not allowed to inject capital to the company and a dividend payment cannot exceed the current surplus, a periodic strategy $\pi$ is admissible if it satisfies the following restriction:
\begin{equation}
X^\pi(T_i)\geq 0,~\forall ~T_i<\tau^\pi,~i\in\mathbb{N}.
\end{equation}
Intuitively, given that a fixed transaction cost $\kappa>0$ is incurred on each dividend payment, the amount of dividend should be large enough to pay the transaction cost, i.e.
\begin{equation}\label{Pikappa}
\xi_i^\pi \geq\kappa~\mbox{if}~\xi^\pi_i\neq 0.
\end{equation}
This holds naturally (see property 4 in Remark \ref{remark.different.kappa.on.v} below).

We can see from the above definitions that not paying any dividend is also allowed. In this case,  no transaction cost is incurred. We denote $\Pi$ the set of all admissible strategies and $\Pi_{\kappa}$ the set of all admissible strategy such that (\ref{Pikappa}) holds. Note when $X$ is of unbounded variation (i.e. with diffusion), it is possible that a dividend payment can cause ruin, which refers to liquidation of the company, i.e. the company chooses to close its business by distributing all the available surplus.

Lastly, we introduce the time preference parameter $\delta>0$. The value function of a strategy $\pi,~\pi\in\Pi$ with initial surplus $x$ is denoted as $V_{\kappa}(x;\pi)$ with the following definition:
\begin{align}
&V_{\kappa}(x;\pi)=\mathbb{E}_x\Big[\sum_{i=1}^{\infty}e^{-\delta T_i}(\xi_i^\pi-\kappa)1_{\{\xi_i^\pi>0\}}1_{\{T_i\leq\tau^\pi\}}\Big].\label{def.value.fcn.kappa}
\end{align}
Our goal is to find an optimal strategy $\pi^*_{\kappa}$ (if exists) such that
\begin{align}\label{def.optimal.value.function}
V_{\kappa}(x;\pi_{\kappa}^*)=v_{\kappa}(x):=\sup_{\pi\in\Pi} V_{\kappa}(x;\pi)(\geq 0).
\end{align}

\begin{remark}\label{remark.different.kappa.on.v}
	From the definitions of $V_{\kappa}$ and $\Pi_\kappa$, we have for any $0\leq\kappa_1\leq \kappa_2$
	\begin{enumerate}
		\item $\pi\in\Pi_{\kappa_2}\implies\pi\in\Pi_{\kappa_1}$ and
		\item $\pi\in\Pi_{\kappa_2}\implies V_{\kappa_1}(x;\pi)\geq V_{\kappa_2}(x;\pi)$ for all $x\geq 0$, and
		\item $v_{\kappa_1}(x)\geq v_{\kappa_2}(x)$ for all $x\geq 0$, and
		\item $V_{\kappa}(x;\pi)\geq 0$ for all $x\geq 0$ and $\pi\in\Pi_\kappa$,
		\item $v_{\kappa}(x)=\sup_{\pi\in\Pi_\kappa} V_{\kappa}(x;\pi)$.
	\end{enumerate}
Further justification for item 5. is provided in \citet*{AvLaWo20d}.
\end{remark}

Thanks to the fifth property in Remark \ref{remark.different.kappa.on.v}, it is sufficient to only consider the strategies in $\Pi_\kappa$. Therefore, in the remaining of this paper, we restrict ourselves to strategies in $\Pi_{\kappa}$.

Note that ruin is immediate when $X(0)=x<0$, which implies for any strategy $\pi$. 
\begin{equation}\label{Veq0.xleq0}
V_\kappa(x;\pi)=0,~x<0.
\end{equation}

\begin{definition}[Periodic ($b_u,b_l$) strategy]\label{D_bubl}
	A periodic $(b_u,b_l) $ strategy with $0\leq b_l\leq b_u$ is the strategy that pays $x-b_l$ whenever the surplus $x$ is above or equal to $b_u$, at decision times. This reduces the surplus level to $b_l$.
	
	\begin{figure}[H]
		\centering
		\includegraphics[width=0.7\textwidth]{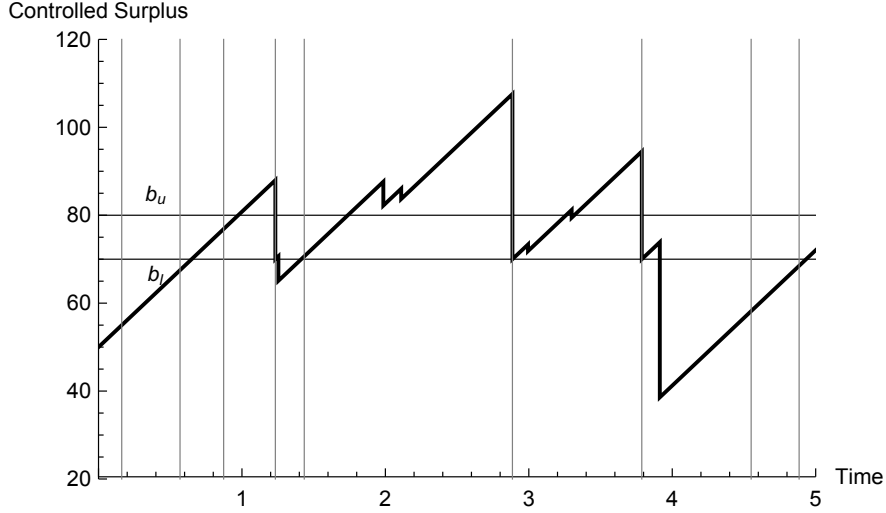}
		\caption{An illustration of a periodic $(b_u,b_l)$ strategy. The vertical lines represent the (Poissonian) dividend decision times.}
		\label{fig.bubl}
	\end{figure}
\end{definition}

By denoting the strategy as $\pi_{b_u,b_l}$, we have
\begin{align}\label{divpis}
\xi_i^{\pi_{b_u,b_l}}=[X^{\pi_{b_u,b_l}}(T_i-)-b_l]1_{\{X^{\pi_{b_u,b_l}}(T_i-)\geq b_u\}}.
\end{align}
Clearly, we have $\pi_{b_u,b_l}\in\Pi_\kappa\iff b_u-b_l\geq\kappa$.

\begin{definition}
	Similarly, a periodic barrier strategy at barrier level $b>0$, denoted as $\pi_{b}$, is defined as
	\begin{align}\label{divpbs}
	\xi_i^{\pi_b}=[X^{\pi_{b}}(T_i-)-b]1_{\{X^{\pi_b}(T_i-)\geq b\}}.
	\end{align}
\end{definition}

\section{Scale functions}\label{section.definition.scale.function}

This section very briefly review knowledge of (fluctuation theory of spectrally negative L\'evy processes and) scale functions for our purpose, i.e. to calculate the value function of a periodic $(b_u,b_l)$ strategy. Interested reader can refer to standard textbook such as \citet*{Ber98} and \citet*{Kyp06}. The tool we are going to use is fluctuation theory for L\'evy processes which is quite standard within the community. Specifically, we will borrow the recent results from the Section 6 of \citet*{PeYa16b}. To fully understand the results, we recommend the work of \citet*{Ber98}, \citet*{Kyp14}, \citet*{LoReZh14}, \citet*{Pan17}, \citet*{ChDo05}, \citet*{PaPeRi15}, \citet*{AvPeYa18} and \citet*{PeYa16b} (in the order), together with the references therein. 

The $q$-scale function, $W_q$, for $x\geq0$, $q\geq 0$ is defined through the inverse Laplace transform of $\frac{1}{\psi(\theta)-q}$, i.e.
\begin{equation*}
\int_{0}^{\infty}e^{-\theta x}W_q(x)dx=\frac{1}{\psi(\theta)-q},~~\theta>\phi_q,
\end{equation*}
where 
\begin{equation*}
\phi_q = \sup \{s\geq 0:\psi(s)=q\}.
\end{equation*}
Next, for $x\geq0$, the ``tilted'' $q$-scale function is define as
\begin{equation}
Z_q(x,\theta)=e^{\theta x}\Big(1-(\psi(\theta)-q)\int_{0}^{x}e^{-\theta y}W_q(y)dy\Big).
\end{equation}
In particular, when $\theta=0$, we write
\begin{align}
Z_q(x)=~&Z_q(x,0)=1+q\int_{0}^{x}W_q(y)dy.
\end{align} 
When $\theta=\phi_{q+r}$, we define
\begin{equation}
Z_{r,q}(x):=Z_q(x,\phi_{r+q})=r\int_0^\infty e^{-\phi_{r+q}u}W_q(x+u)du,
\end{equation}
where the last equality comes from $\int_0^\infty e^{-\phi_{r+q}u}W_q(u)du=1/r$.
Lastly, we define for $b\geq 0$
\begin{equation}\label{eqt.W3}
W_{r,q,b}(x):= W_q(x)+r\int_{b}^{x}W_{q+r}(x-y)W_q(y)dy,
\end{equation}
where the integral vanishes when $x\leq b$.

Notice that when $x<0$, all functions defined above are extended with $$W_q(x)=0,$$ which implies 
\begin{align}
Z_q(x)&=1,\\
{\overline{Z}}_q(x)&=x.
\end{align}
We also define the integral of functions by adding an overhead line to it, e.g. 
\begin{align*}
{\overline{W}}_q(x)&=\int_{0}^{x}W_q(y)dy,\\
{\overline{\overline{W}}}_q(x)&=\int_{0}^{x}{\overline{W}}_q(y)dy,\\
{\overline{Z}}_q(x)&=\int_{0}^{x}Z_q(y)dy,\nonumber
\end{align*}
for $x\geq 0$.

We end this section with a remark that except in a few cases explicit calculation of $W_q$ is difficult, if not impossible. We refer to Remark 1.1 in \citet*{LoReZh14} for a review in the numerical aspect of scale functions.

\section{Additional assumption for optimality}\label{section.additional.assumption}

It is well known that for a spectrally negative L\'evy process, barrier type of strategy is in general not optimal, e.g. see \cite*{AzMu05}. However, with the additional assumption that the L\'evy measure has completely monotonic density, the shape of the scale function is ``nice'' and barrier type of strategy is optimal, see e.g. \citet*{Loe08}, \citet*{Loe08a}, \citet*{NoPeYaYa17}.

In this section, we follow the lines of \citet*{Loe08} and assume that the L\'evy measure of $X$ has completely monotonic density, i.e. the L\'evy measure of the dual process $-{X}$, $\overline{\corr{\Upsilon}}$, admits a density $\eta$, whose $n^{th}$ derivative, $\eta^{(n)}$, exists for all $n\in\mathbb{N}$ with
\begin{equation}
{(-1)}^n\eta^{(n)}(x)\geq 0,~x>0.
\end{equation}

Note that this is a known sufficient condition for a (continuously decided) barrier strategy to be optimal. In general, the optimal strategy for a Cram\'er-Lundberg model is a band strategy as discovered by \citet*{AzMu05}, where many different ``bands'' are possible. Although being artificial, we note that if the optimal strategy is a barrier strategy, the optimal strategy is then characterised by the barrier level(s), which offers much simplicity to obtain qualitative insights.

Lastly, note that while a ``log-convex'' condition on the L\'evy measure \citep*[see][]{LoRe10} \corrr{is weaker, and} would imply a completely monotonic L\'evy density (which is the assumption we will make throughout the paper), we chose to directly assume the latter because it offers an explicit formula for the calculation of the first derivative of the value function in the upper branch (see Lemma \ref{lemma.10}).

\section{Preliminary results}\label{section.prelim.snlp}
From \citet*{NoPeYaYa17}, we know that the value function of a periodic barrier strategy at barrier level $b\geq 0$, ${\pi}_{b}$ is given by
\begin{align}
&{{V}}_0(x;{\pi}_b)=\frac{\gamma}{\phiqr\Zqr^\prime(b)}\Big(\WB(x)-\gamma\Wq(b)\Wb(x-b)\Big)-\gamma\Wbb(x-b),~x\in\mathbb{R},\\
&{{V}}_0(x;{\pi}_b)=\frac{\gamma}{\phiqr\Zqr^\prime(b)}\Wq(x),\quad x\leq b\label{value.b.lower}
\end{align}
when there are no fixed transaction costs, i.e. $\kappa=0$, \corr{where we recall the function $\WB$ is defined in \eqref{eqt.W3}}. In addition, when the L\'evy measure of $X$ has completely monotonic density, 
\begin{enumerate}
	\item We have $W_q\in\mathscr{C}^\infty$ and
	\begin{equation}
	W_q(x)=\frac{\partial}{\partial y}\phi(y)\Big|_{y=q}e^{\phi_q x}-\int_{0}^{\infty}e^{-x t}\mu_q(dt),
	\end{equation}
	%where $\int_{0}^{\infty}e^{-x t}\mu_q(dt)$ is the density of the $q$-resolvent measure of $-X$ at $x>0$.
	for some finite measure $\mu_q$.
	
	\item We have $\Wq^{\prime\prime\prime}>0\implies$ there exists $\bar{b}\geq 0$ such that $\Wq^{\prime\prime}(b)<0, ~b\in(0,\bar{b})$,  $\Wq^{\prime\prime}(\bar{b})=0$ and $\Wq^{\prime\prime}(b)>0,~ b\in(\bar{b},\infty)$.
	
	\item For $x\geq b$, ${{V}}'_0(x;{\pi}_b)$ is given by
	\begin{equation}
	{{V}}_0^\prime(x;{\pi}_b)=K+\gamma\frac{\int_{0}^{\infty}e^{-tx}g(t,b)\mu_{q+r}(dt)}{\phi_{q+r}\Zqr^\prime(b)},
	\end{equation}
	where $K\in(0,1)$ and 
	\begin{equation}
	g(t,b)=t+\gamma\Wq(0)+\gamma\int_{0}^{b}e^{ut}\big(\Wq^\prime(u)-\frac{\phi_{q+r}}{\gamma}\Zqr^\prime(b)\big)du-\frac{\phi_{q+r}}{t}\Zqr^\prime(b).
	\end{equation}
	\item By defining the function $h$ as
	\begin{equation}
	h(x):=e^{-\phiqr x}\Zqr''(x)=\gamma\int_{x}^{\infty}e^{-\phiqr y}\Wq^{\prime\prime}(y)dy,~x>0,
	\end{equation}
	where the second equality is equation (4.11) in \citet*{NoPeYaYa17}, we have that
	\begin{equation}\label{result1.noba}
	\mbox{either $h(x)\geq 0$ for $x\geq 0$ or there exists a ${b}^*>0$ such that $h(x)\leq 0\iff x\in(0,{b}^*]$}.
	\end{equation}
	\item We have ${b}^*>0\iff h(0)\geq 0$. Moreover, it holds that
	\begin{equation}
	{{v}}_0(x)={{V}}_0(x;{\pi}_{{b}^*}),~x\geq 0
	\end{equation}
	and
	\begin{equation}\label{ineq.vDbstar}
	{{V}}_0^\prime(x;{\pi}_{{b}^*})<1, ~x\geq {b}^*.
	\end{equation}
	
\end{enumerate}

\section{Verification lemma}\label{section.verification.snlp}
A function $F$ is said to be smooth if $F\in\mathscr{C}^2$ (resp. $F\in\mathscr{C}^1$) if ${X}$ is of unbounded variation (resp. bounded variation). The extended generator for ${X}$, ${\mathscr{L}}$, applied on a function $F$ is given by
\begin{equation}\label{Def.extended.operation.2}
{\mathscr{L}}F(x):=cF'(x)+\frac{\sigma^2}{2}F''(x)+\int_{(-\infty,0)}\big[F(x+s)-F(x)-F'(x)s1_{\{|s|<1\}}\big]\corr{\Upsilon}(ds)
\end{equation}
if it is well defined, where the term $\frac{\sigma^2}{2}F''(x)$ is understood to be vanished if ${X}$ is of bounded variation (no Gaussian component). 
The following lemma characterises sufficient conditions for a candidate strategy to satisfy in order to be optimal.

\begin{lemma}\label{lemma.ver}
	Suppose $\pi\in{\Pi}_\kappa$ and its value function $H(x):={V}_{\kappa}(x;\pi)$ satisfies
	\begin{enumerate}
		\item $H$ is smooth, %and $H'$ is bounded on sets $[1/n,n]$ for all $n\in\mathbb{N}$,
		\item $H\geq 0$ on $\mathbb{R}$,
		\item $(\mathscr{L}-\delta)H(x) +\gamma\corr{\max_{l\in[0,x]}}\Big(\corr{(l-\kappa)1_{\{l>0\}}}+ H(x-l)-H(x)\Big) \leq 0$, \corr{$x\geq 0$}.
	\end{enumerate}
	Then the strategy $\pi$ is optimal, i.e. $H(x)={v}_\kappa(x)$ for all $x\in\mathbb{R}$.
\end{lemma}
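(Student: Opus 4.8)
The plan is to use a standard stochastic-calculus verification argument: show that any admissible strategy cannot do better than $H$, and that $\pi$ itself attains $H$. Fix an arbitrary $\tilde\pi\in\Pi_\kappa$ with associated surplus $X^{\tilde\pi}$, ruin time $\tau^{\tilde\pi}$, decision times $\{T_i\}$ and dividend amounts $\{\xi_i\}$. The key object is the process $e^{-\delta t}H(X^{\tilde\pi}(t))$, to which I would apply the appropriate change-of-variables / It\^o formula for spectrally negative L\'evy processes. Because $H$ is only $\mathscr{C}^2$ (resp. $\mathscr{C}^1$), one uses the Meyer--It\^o / It\^o formula for semimartingales valid at that smoothness (this is standard in this literature, e.g.\ as in \citet*{NoPeYaYa17}); between decision times $X^{\tilde\pi}$ evolves as $X$, at each $T_i$ it jumps down by $\xi_i$, and the compensated jump measure together with the continuous part produce the operator $\mathscr{L}$. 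I would first localise by a sequence of stopping times $\{\sigma_n\}$ to control the martingale part and ensure integrability, then remove the localisation at the end via monotone/dominated convergence, using $H\geq 0$ (hypothesis 2) and the admissibility/discounting to justify the limits.

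The second step is to split the resulting expression into (i) the continuous-time drift contribution, which is $\int_0^{t\wedge\tau} e^{-\delta s}(\mathscr{L}-\delta)H(X^{\tilde\pi}(s))\,ds$, (ii) the jump contributions at the decision times, $\sum_{i:\,T_i\le t\wedge\tau} e^{-\delta T_i}\big(H(X^{\tilde\pi}(T_i-)-\xi_i)-H(X^{\tilde\pi}(T_i-))\big)$, and (iii) a mean-zero local-martingale term. Taking expectations kills (iii). Now, at a decision time the realised dividend cash-flow is $e^{-\delta T_i}(\xi_i-\kappa)1_{\{\xi_i>0\}}$; the compensator of the Poissonian decision clock has rate $\gamma$, so in expectation the decision-time jumps contribute $\gamma\int_0^{t\wedge\tau} e^{-\delta s}\,\mathbb{E}_x\big[H(X^{\tilde\pi}(s)-L_s)-H(X^{\tilde\pi}(s))\big]\,ds$ for the chosen dividend rule $L_s$, while the cash flow itself contributes $\gamma\int_0^{t\wedge\tau} e^{-\delta s}\,\mathbb{E}_x[(L_s-\kappa)1_{\{L_s>0\}}]\,ds$. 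Combining, the integrand is bounded above by $(\mathscr{L}-\delta)H(X^{\tilde\pi}(s)) + \gamma\max_{l\in[0,X^{\tilde\pi}(s)]}\big((l-\kappa)1_{\{l>0\}} + H(X^{\tilde\pi}(s)-l)-H(X^{\tilde\pi}(s))\big)$, which is $\le 0$ by hypothesis 3. Rearranging and letting $t\to\infty$ and $n\to\infty$ then yields
\[
H(x)\ \ge\ \mathbb{E}_x\Big[\sum_{i=1}^\infty e^{-\delta T_i}(\xi_i-\kappa)1_{\{\xi_i>0\}}1_{\{T_i\le\tau^{\tilde\pi}\}}\Big] = V_\kappa(x;\tilde\pi),
\]
using $H\ge 0$ to drop the leftover boundary term at $t\wedge\tau\wedge\sigma_n$. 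Since $\tilde\pi$ was arbitrary, $H(x)\ge v_\kappa(x)$. The reverse inequality is immediate: $H(x)=V_\kappa(x;\pi)\le v_\kappa(x)$ by definition of the supremum (and $\pi\in\Pi_\kappa$). Hence $H=v_\kappa$ and $\pi$ is optimal. For $x<0$ both sides are $0$ by \eqref{Veq0.xleq0}, so the identity extends to all of $\mathbb{R}$.

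The main obstacle, as usual in these verification lemmas, is the technical bookkeeping around the application of It\^o's formula at the reduced smoothness and the localisation: one must justify that the local-martingale part is a true martingale after localising, control the behaviour across the (possibly infinitely many) decision-time jumps, and interchange limit and expectation when removing the localising sequence and sending $t\to\infty$. The discounting factor $e^{-\delta t}$ with $\delta>0$, non-negativity of $H$, and admissibility (dividends never exceed the surplus, surplus killed at $0$) are the ingredients that make these limits go through; in the unbounded-variation case one additionally checks that a dividend payment causing immediate liquidation is correctly captured by the $l=x$ endpoint of the maximum in hypothesis 3. I would also remark that hypothesis 1 guarantees $\mathscr{L}H$ is well defined so that the integrand in (i) makes sense, and that the supremum in hypothesis 3 is attained (hence ``$\max$'') because it is a continuous function of $l$ on the compact set $[0,x]$.
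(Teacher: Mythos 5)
Your proof is correct and follows the same standard verification argument that the paper defers to Appendix~B of \citet*{AvLaWo20d}: localise, apply the It\^o/Meyer--It\^o formula at the given smoothness, compensate the Poissonian decision clock at rate $\gamma$, combine the $H$-jumps with the realised dividend cash flows so that hypothesis~3 forces the integrand to be nonpositive, use $H\ge 0$ to discard the boundary term, and pass to the limit; the reverse inequality is immediate from $\pi\in\Pi_\kappa$. Nothing essential is missing.
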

The proof follows from Appendix B in \citet*{AvLaWo20d}.

\section{Value function and the choice of $(b_u,b_l)$}\label{section.value.choice.snlp}

\subsection{On the value function and its smoothness}

The value function of a periodic $(b_u,b_l)$ strategy is given by the following lemma.
\begin{lemma}\label{lemma.value.snlp}
 \corr{The value function of a periodic $(b_u,b_l)$ strategy is given by
\begin{align}
{V}_\kappa(x;{\pi}_{b_u,b_l})=~& \frac{\gamma(\frac{1}{\phiqr}+g-\kappa)}{\phiqr\Zqr(b_u)-\gamma\Wq(b_l)}\Big(\WB(x)-\gamma\Wq(b_l)\Wb(x-b_u)\Big)\nonumber\\&-\gamma\Big(\Wbb(x-b_u)+(g-\kappa) \Wb(x-b_u)\Big),
\end{align}
where 
\begin{equation}
g:=b_u-b_l
\end{equation}
and we recall that $W_{\gamma,\delta,b}$ is defined in \eqref{eqt.W3}.}

If furthermore the smoothness condition (defined as)
\begin{equation}\label{smooth.condition}
{V}_\kappa(b_u;{\pi}_{b_u,b_l})={V}_\kappa(b_l;{\pi}_{b_u,b_l})+b_u-b_l-\kappa
\end{equation}
holds, the value function of a periodic $(b_u,b_l)$ strategy reduces to the value function of a periodic barrier strategy at barrier level $b_u$ without transaction costs, i.e. 
\begin{equation}\label{eqt.reduce.to.barrier}
{V}_\kappa(x;{\pi}_{b_u,b_l})={V}_0(x;{\pi}_{b_u})=\frac{\gamma}{\phiqr\Zqr^\prime(b_u)}\Big(\WB(x)-\gamma\Wq(b_u)\Wb(x-b_u)\Big)-\gamma\Wbb(x-b_u).
\end{equation}

\end{lemma}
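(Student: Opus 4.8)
The plan is to derive the value function formula by a first-step analysis at the first decision time $T_1$, then to verify the smoothness reduction by direct substitution. For the first part, I would condition on $T_1\sim\mathrm{Exp}(\gamma)$ and on the position $X(T_1-)$ of the surplus just before the first decision, using the strong Markov property. Prior to $T_1$ the process $X$ runs freely, so ruin can only occur by $X$ hitting $(-\infty,0)$ before $T_1$; the relevant ingredients are the discounted (at rate $\delta$) resolvent/exit quantities for a spectrally negative L\'evy process killed at an independent $\mathrm{Exp}(\gamma)$ clock, which is exactly what the Section 6 results of \citet*{PeYa16b} package in terms of the functions $\Wq,\Zq,\Zqr,\WB$ introduced in Section \ref{section.definition.scale.function}. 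Concretely, if $X(T_1-)=y<b_u$ no dividend is paid and the process restarts from $y$; if $X(T_1-)=y\ge b_u$ a dividend of $y-b_l$ is paid (net value $y-b_l-\kappa$) and the process restarts from $b_l$. This yields a renewal-type identity
\begin{equation*}
{V}_\kappa(x;{\pi}_{b_u,b_l}) = A(x) + {V}_\kappa(b_l;{\pi}_{b_u,b_l})\, B(x) + (\text{constant involving }g-\kappa)\cdot B(x),
\end{equation*}
where $A$ and $B$ are explicit combinations of scale functions (the $\gamma$-excursion/occupation terms), evaluated at $x$. Solving this for the unknown constant ${V}_\kappa(b_l;{\pi}_{b_u,b_l})$ — which, since $b_l\le b_u$, is itself read off from the $x\le b_u$ branch of the same formula — closes the system and produces the stated closed form, with the denominator $\phiqr\Zqr(b_u)-\gamma\Wq(b_l)$ arising precisely as the normalising factor from that linear solve. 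The main obstacle is bookkeeping: correctly assembling the killed-process exit identities so that the $\WB$, $\Wb$, $\Wbb$ terms appear with the right coefficients, and handling the unbounded-variation case where a dividend payment from just above $b_u$ down to $b_l$ may itself be the event that triggers ruin (when $b_l$ could be $0$) — this is where It\^o's excursion theory enters, as flagged in the abstract, and care is needed that the formula remains valid as $x$ crosses $b_u$ and as $x\downarrow 0$.

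For the second part, assume the smoothness condition \eqref{smooth.condition}. Rewriting it using the $x\le b_u$ branch, ${V}_\kappa(b_l;{\pi}_{b_u,b_l})$ equals $\frac{\gamma(\frac1{\phiqr}+g-\kappa)}{\phiqr\Zqr(b_u)-\gamma\Wq(b_l)}\WB(b_l)$ (since the $\Wb(\cdot-b_u)$ terms vanish below $b_u$), and \eqref{smooth.condition} becomes one scalar equation relating $b_u,b_l,\kappa$. The claim is that, under this constraint, the general formula collapses to ${V}_0(x;{\pi}_{b_u})$. I would prove this by showing that the constraint forces $\frac{\gamma(\frac1{\phiqr}+g-\kappa)}{\phiqr\Zqr(b_u)-\gamma\Wq(b_l)} = \frac{\gamma}{\phiqr\Zqr^\prime(b_u)}$ and simultaneously $\gamma\Wq(b_l)\cdot(\text{prefactor}) = \frac{\gamma}{\phiqr\Zqr^\prime(b_u)}\gamma\Wq(b_u)$ together with the cancellation $g-\kappa$ term being absorbed — i.e. that the two representations agree coefficient-by-coefficient in the basis $\{\WB(x),\Wb(x-b_u),\Wbb(x-b_u)\}$. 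The one genuinely nontrivial identity here is that $\Zqr(b_u)$ and $\Zqr^\prime(b_u)$ get reconciled via \eqref{smooth.condition}; I expect this follows because the difference ${V}_\kappa(x;{\pi}_{b_u,b_l})-{V}_0(x;{\pi}_{b_u})$ is, by the formulas, a fixed multiple of a single scale-function combination that vanishes at $x=b_u$ exactly when \eqref{smooth.condition} holds, and then vanishes identically by the uniqueness of the representation (or equivalently because that combination is itself proportional to the null solution of the relevant integro-differential equation). I would finish by noting the stated right-hand side of \eqref{eqt.reduce.to.barrier} is literally the formula recalled at the start of Section \ref{section.prelim.snlp}, so no further computation is required.

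The likely sticking point, and the place I would spend the most care, is the first-step decomposition in the unbounded-variation regime: one must use the $\delta$-discounted potential measure of $X$ up to $\min(T_1,\tau)$ (with $\tau$ the hitting time of $(-\infty,0)$) rather than naive strong-Markov restarting, and the creeping/non-creeping dichotomy at $0$ matters for whether ${V}_\kappa(0;\cdot)$ picks up a $\Wq(0)$ boundary term. Everything downstream is then algebraic manipulation of scale-function identities of the type $\int_0^\infty e^{-\phiqr u}\Wq(u)\,du = 1/\gamma$ and the definitions of $\Zqr$, $\WB$, which I would not spell out in detail.
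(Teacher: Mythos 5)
Your plan is essentially the paper's own proof: a first-step (renewal) decomposition at the first Poissonian decision time via the strong Markov property, assembled from the killed-process exit and occupation identities of \citet*{PeYa16b} (Section 6), with It\^o's excursion theory supplying $f(0)$ in the unbounded-variation case where the naive plug-in at $0$ degenerates because $W_{\gamma+\delta}(0)=0$; the smoothness reduction is then verified algebraically, exactly as you outline. The only nitpick is on framing: the paper shifts coordinates so that $b_u\mapsto 0$ and works on a truncated interval $[a,b]$ with $b\to\infty$, and the reason excursion theory is needed is the vanishing of $W_{\gamma+\delta}(0)$ (which you do correctly identify in your ``creeping'' remark), not the possibility that a dividend payment to $b_l=0$ itself triggers ruin.
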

\begin{proof}
Results from \citet*{PeYa16b} can be adapted to our context. See Appendix \ref{appendix.value.snlp} for details. 
Note that our proof does not use the additional assumption on the L\'evy measure, thus is true for general spectrally negative L\'evy processes.
\end{proof}

When $x\leq b_u$, the value function is given by
\begin{equation}\label{eqt.value.snlp.lower}
{V}(x;{\pi}_{b_u,b_l}) = \frac{\gamma(\frac{1}{\phiqr}+g-\kappa)}{\phiqr\Zqr(b_u)-\gamma\Wq(b_l)}\Wq(x),~x\leq b_u.
\end{equation}
Hence, the smoothness condition ${V}_\kappa(b_u;{\pi}_{b_u,b_l})={V}_\kappa(b_l;{\pi}_{b_u,b_l})+g-\kappa$ is equivalent to
\begin{align*}
%\iff~&{V}_\kappa(b_u;{\pi}_{b_u,b_l})={V}_\kappa(b_l;{\pi}_{b_u,b_l})+b_u-b_l-\kappa\\
%\iff~
&\frac{\gamma(\frac{1}{\phiqr}+g-\kappa)}{\phiqr\Zqr(b_u)-\gamma\Wq(b_l)}(\Wq(b_u)-\Wq(b_l))=g-\kappa\\
\iff~&\gamma(\frac{1}{\phiqr}+g-\kappa)(\Wq(b_u)-\Wq(b_l)) -(g-\kappa)(\phiqr\Zqr(b_u)-\gamma\Wq(b_l))=0\\
\iff ~& \frac{\gamma}{\phiqr}(\Wq(b_u)-\Wq(b_l))+(g-\kappa)(\gamma\Wq(b_u)-\phiqr \Zqr(b_u))=0\\
\iff~& (g-\kappa)\Zqr^\prime(b_u)-\frac{\gamma}{\phiqr}(\Wq(b_u)-\Wq(b_l))=0,
\end{align*}
or
\begin{equation}
{\Gamma}_{b_l}(g):=(g-\kappa)\Zqr^\prime(b_l+g)-\frac{\gamma}{\phiqr}(\Wq(b_l+g)-\Wq(b_l))=0,\label{eqt.Gamma}
\end{equation}
with $g=b_u-b_l$.

We want now to show that for all $b_l\geq 0$, there is a unique $g\geq \kappa$ such that ${\Gamma}_{b_l}(g)=0$, which is precisely the following lemma.
\begin{lemma}\label{lemma.bu.exist}
	For every $b_l\geq 0$, there exist a unique $b_u>b_l+\kappa$ such that the smoothness condition holds, i.e. ${\Gamma}_{b_l}(b_u-b_l)=0$.
\end{lemma}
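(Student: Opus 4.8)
The plan is to study the function $\Gamma_{b_l}(g)$ defined in \eqref{eqt.Gamma} as a function of $g$ on $[\kappa,\infty)$, and show it has exactly one zero there, which moreover lies in the open interval $(\kappa,\infty)$. First I would check the boundary value at $g=\kappa$: there $\Gamma_{b_l}(\kappa) = -\frac{\gamma}{\phiqr}\big(\Wq(b_l+\kappa)-\Wq(b_l)\big)$, which is strictly negative since $\Wq$ is strictly increasing on $[0,\infty)$ (recall $\Wq>0$ and, under the completely monotonic density assumption, $\Wq\in\mathscr{C}^\infty$ with $\Wq'>0$). This rules out $g=\kappa$ as a solution and gives the strict inequality $b_u>b_l+\kappa$ for free. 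Next I would examine the behaviour as $g\to\infty$: using $\Zqr'(x)=\gamma e^{\phiqr x}\int_x^\infty e^{-\phiqr y}\Wq'(y)\,dy$ together with the asymptotics $\Wq(x)\sim (\partial_\theta\psi)^{-1}|_{\phi_q} e^{\phi_q x}$ and a comparison of the exponential growth rates $\phi_q<\phiqr$ (so that $\Zqr'$ grows like $e^{\phiqr x}$ while $\Wq$ grows only like $e^{\phi_q x}$), one sees the term $(g-\kappa)\Zqr'(b_l+g)$ dominates and $\Gamma_{b_l}(g)\to+\infty$. By the intermediate value theorem there is at least one zero in $(\kappa,\infty)$.

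For uniqueness I would differentiate. A direct computation gives
\begin{equation}
\Gamma_{b_l}'(g) = \Zqr'(b_l+g) + (g-\kappa)\Zqr''(b_l+g) - \frac{\gamma}{\phiqr}\Wq'(b_l+g).\nonumber
\end{equation}
The combination $\Zqr'(x)-\frac{\gamma}{\phiqr}\Wq'(x)$ can be rewritten via the identity $\Zqr'(x)=\gamma e^{\phiqr x}\int_x^\infty e^{-\phiqr y}\Wq'(y)\,dy$: integrating by parts (or using the relation already recorded, $h(x)=e^{-\phiqr x}\Zqr''(x)=\gamma\int_x^\infty e^{-\phiqr y}\Wq''(y)\,dy$) shows $\phiqr\Zqr'(x)-\gamma\Wq'(x) = -\Zqr''(x)$, i.e. $\Zqr'(x)-\frac{\gamma}{\phiqr}\Wq'(x) = -\frac{1}{\phiqr}\Zqr''(x)$. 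Hence
\begin{equation}
\Gamma_{b_l}'(g) = (g-\kappa)\Zqr''(b_l+g) - \frac{1}{\phiqr}\Zqr''(b_l+g) = \Big(g-\kappa-\frac{1}{\phiqr}\Big)\Zqr''(b_l+g).\nonumber
\end{equation}
Now I would invoke the sign structure of $\Zqr''$ coming from item 2 of Section~\ref{section.prelim.snlp} (via $h$): $\Zqr''(x)=e^{\phiqr x}h(x)$, and by \eqref{result1.noba} the function $h$ — hence $\Zqr''$ — is either nonnegative on all of $[0,\infty)$, or negative exactly on $(0,b^*]$ and nonnegative afterwards. In either case $\Zqr''(b_l+g)$ changes sign at most once and is eventually nonnegative. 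Pairing this with the linear factor $(g-\kappa-1/\phiqr)$, which is negative then positive with a single crossing, $\Gamma_{b_l}'$ is negative-then-positive with a single sign change on $(\kappa,\infty)$ (the early region where $\Zqr''\le 0$ only reinforces $\Gamma_{b_l}'<0$ there, since $g-\kappa-1/\phiqr$ may still be negative; one checks the orders of the two crossings do not create a spurious extra sign change — if $b^*$-type crossing of $\Zqr''$ occurs after $g=\kappa+1/\phiqr$ then on the overlap both factors are opposite-signed giving $\Gamma_{b_l}'\le0$, still monotone structure). Thus $\Gamma_{b_l}$ is strictly decreasing then strictly increasing on $(\kappa,\infty)$, so together with $\Gamma_{b_l}(\kappa)<0$ and $\Gamma_{b_l}(g)\to+\infty$ it has exactly one zero, which is the desired $g=b_u-b_l$.

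The main obstacle I anticipate is the bookkeeping in the last step: cleanly establishing that the product $\big(g-\kappa-\tfrac1{\phiqr}\big)\Zqr''(b_l+g)$ has the "one sign change, from $-$ to $+$" property in all cases of \eqref{result1.noba}, including the ordering of the two potential crossing points (the root $b^*$ of $h$ shifted by $b_l$, versus $\kappa+1/\phiqr$). A careful case split — (i) $h\ge0$ everywhere, where $\Gamma_{b_l}'$ has the sign of $g-\kappa-1/\phiqr$ directly; (ii) $h$ negative on $(0,b^*]$ with $b_l+g$ entering this region, handled by noting $\Gamma_{b_l}'\le0$ wherever either factor is nonpositive and the positive region is a single terminal interval — should close this, but it is the part requiring genuine care rather than routine calculation. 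Everything else (the boundary sign, the growth at infinity, the generator-free identities for $\Zqr'$ and $\Zqr''$) is standard manipulation of scale functions.
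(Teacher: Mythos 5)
There is a genuine gap, rooted in a sign error in the derivative computation. From $\Zqr(x)=\gamma e^{\phiqr x}\int_x^\infty e^{-\phiqr y}\Wq(y)\,dy$ one obtains $\Zqr'(x)=\phiqr\Zqr(x)-\gamma\Wq(x)$ and hence $\Zqr''(x)=\phiqr\Zqr'(x)-\gamma\Wq'(x)$, so
\begin{equation}
\Zqr'(x)-\frac{\gamma}{\phiqr}\Wq'(x)=+\frac{1}{\phiqr}\Zqr''(x),\nonumber
\end{equation}
not $-\frac{1}{\phiqr}\Zqr''(x)$ as you wrote. Plugging this in gives
\begin{equation}
\Gamma_{b_l}'(g)=\Big(g-\kappa+\tfrac{1}{\phiqr}\Big)\Zqr''(b_l+g),\nonumber
\end{equation}
which is exactly \eqref{equation.dGamma} in the paper, since $\Zqr''(b_u)=e^{\phiqr b_u}h(b_u)$. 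The scalar factor $g-\kappa+\tfrac1{\phiqr}$ is strictly positive for every $g\ge\kappa$, so $\Gamma_{b_l}'$ has the same sign as $\Zqr''(b_l+g)$ — there is no second sign crossing to juggle, and the whole case analysis you flagged as the ``main obstacle'' evaporates.

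Your version with the wrong sign, $\big(g-\kappa-\tfrac1{\phiqr}\big)\Zqr''(b_l+g)$, does not have the ``one sign change from $-$ to $+$'' property: on the initial interval where \emph{both} $g-\kappa-\tfrac1{\phiqr}<0$ and $\Zqr''(b_l+g)\le 0$, the product is $\ge 0$, not $\le 0$. (You claim the early negative region ``only reinforces $\Gamma_{b_l}'<0$'', but the product of two nonpositive numbers is nonnegative.) Depending on where $b^*-b_l$ falls relative to $\kappa+1/\phiqr$, the product can be positive, then negative, then positive — two sign changes — and the decreasing-then-increasing structure of $\Gamma_{b_l}$ you rely on is not available; your proof would not close.

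With the sign fixed, your approach does work and is in fact a clean alternative to the paper's: since $\Gamma_{b_l}'$ has the sign of $h(b_l+g)$, \eqref{result1.noba} makes $\Gamma_{b_l}$ either nondecreasing or decreasing-then-strictly-increasing on $[\kappa,\infty)$, which together with $\Gamma_{b_l}(\kappa)<0$ and $\Gamma_{b_l}(g)\to+\infty$ gives exactly one root. The paper instead argues by contradiction (if $\Gamma_{b_l}(g)=0$ and $\Gamma_{b_l}'(g)\le 0$, then the unimodal shape of $\Wq'$ and the mean value theorem force $\Gamma_{b_l}(g)<0$), and also derives $\Gamma_{b_l}(g)\to+\infty$ from $\Gamma_{b_l}'(g)\to+\infty$ via $\Wq'''>0$ rather than from the exponential asymptotics you invoke; both routes are fine for that step.
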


\begin{proof}
	First, we have
	\begin{align*}
	{\Gamma}_{b_l}(\kappa)=~&(\kappa-\kappa)\Zqr^\prime(b_l+\kappa)-\frac{\gamma}{\phiqr}(\Wq(b_l+\kappa)-\Wq(b_l))\\
	=~&-\frac{\gamma}{\phiqr}(\Wq(b_l+\kappa)-\Wq(b_l))\\
	<~&0
	\end{align*}
	as $\Wq$ is an increasing function.
	
	We proceed to show
	 $\lim_{g\rightarrow\infty}{\Gamma}_{b_l}(g)=+\infty$.
	By differentiating (\ref{eqt.Gamma}) with respect to $g$ and denote $b_u=b_l+g$, we get
	\corr{\begin{align}\label{equation.dGamma}
	\frac{\partial}{\partial g}{\Gamma}_{b_l}(g)=~&
	(g-\kappa)\Zqr^{\prime\prime}(b_l+g)+\Zqr^\prime(b_l+g)-\frac{\gamma}{\phiqr}\Wq^\prime(b_l+g)\nonumber\\
	=~&(g-\kappa)\Zqr^{\prime\prime}(b_l+g)+\frac{1}{\phiqr}\Big(\phiqr\Zqr^\prime(b_l+g)-\gamma\Wq^\prime(b_l+g)\Big)\nonumber\\
	%=~&(g-\kappa)\Zqr^{\prime\prime}(b_l+g)+\frac{1}{\phiqr}\Zqr^{\prime\prime}(b_l+g)\nonumber\\
	%=~&(\frac{1}{\phiqr}+g-\kappa) \Zqr^{\prime\prime}(b_u)\nonumber\\
	=~&(\frac{1}{\phiqr}+g-\kappa) e^{\phiqr b_u} h(b_u).
	\end{align}}
	Hence, by letting $g\rightarrow\infty$ and using $\Wq'''>0$ (the second item in Section \ref{section.prelim.snlp}), we have
	\corr{\begin{align}
	\lim_{g\rightarrow\infty}\frac{\partial}{\partial g}{\Gamma}_{b_l}(g)=~&\lim_{g\rightarrow\infty}(\frac{1}{\phiqr}+g-\kappa)\gamma e^{\phiqr b_u}\int_{b_u}^{\infty}e^{-\phiqr y}\Wq^{\prime\prime}(y)dy\nonumber\\
	>~&\lim_{g\rightarrow\infty}(\frac{1}{\phiqr}+g-\kappa)\gamma e^{\phiqr b_u}\int_{b_u}^{\infty}e^{-\phiqr y}dy\Wq^{\prime\prime}(y_0),~\mbox{large enough $g$ s.t. }\Wq^{\prime\prime}(y_0)>0\nonumber\\
	%=~&\lim_{g\rightarrow\infty} (\frac{1}{\phiqr}+g-\kappa)\gamma\Wq^{\prime\prime}(y_0) e^{\phiqr b_u}\int_{b_u}^{\infty}e^{-\phiqr y}dy\nonumber\\
	%=~&\lim_{g\rightarrow\infty}(\frac{1}{\phiqr}+g-\kappa)\frac{\gamma}{\phiqr}\Wq^{\prime\prime}(y_0)\nonumber\\
	=~&+\infty
	\end{align}}
	and hence $\lim_{g\rightarrow\infty}{\Gamma}_{b_l}(g)=+\infty$ as desired. By the continuity of ${\Gamma}_{b_l}$ and equation \eqref{eqt.Gamma}, we know that there exists a root for ${\Gamma}_{b_l}(g)=0$. We now show that such root is unique.
	
From equation (\ref{equation.dGamma}), we know that
	\corr{\begin{equation*}
	\frac{\partial}{\partial g}{\Gamma}_{b_l}(g)=(\frac{1}{\phiqr}+g-\kappa) \Zqr^{\prime\prime}(b_u)=(\frac{1}{\phiqr}+g-\kappa)e^{\phiqr b_u}h(b_u).
	\end{equation*}}
	Hence, if ${\Gamma}_{b_l}(g)=0$ and $\frac{\partial}{\partial g}{\Gamma}_{b_l}(g)\leq 0$, we have 
	\begin{equation}
	\frac{\partial}{\partial g}{\Gamma}_{b_l}(g)\leq 0 \implies
	\begin{cases}
	&\Zqr^{\prime\prime}(b_u)\leq 0 \iff \Zqr^\prime(b_u)\leq \frac{\gamma}{\phiqr}\Wq^\prime(b_u)\\
	&b^*> 0\mbox{ exists and }\kappa\leq b_u\leq b^*<\bar{b}\implies \Wq^\prime(b_u)\leq \Wq^\prime(y),~y\leq b_u
	\end{cases},
	\end{equation}
	where \corr{we recall $\bar{b}$ is defined in the second item in Section \ref{section.prelim.snlp} and} the second implication follows from (\ref{result1.noba}). Consequently, we have
	\begin{align}
	{\Gamma}_{b_l}(g)%=~&(g-\kappa)(\phiqr \Zqr(b_u)-\gamma\Wq(b_u))-\frac{\gamma}{\phiqr}(\Wq(b_u)-\Wq(b_l))\nonumber\\
	=~&(g-\kappa)\Zqr^\prime(b_u)-\frac{\gamma}{\phiqr}(\Wq(b_u)-\Wq(b_l))\nonumber\\
	\leq~&(g-\kappa)\frac{\gamma}{\phiqr}\Wq^\prime(b_u)-\frac{\gamma}{\phiqr}(\Wq(b_u)-\Wq(b_l))\nonumber\\
	=~&\frac{\gamma}{\phiqr}\Big((g-\kappa)\Wq^\prime(b_u)-g\Wq^\prime(\alpha)\Big),~\corr{\exists\alpha\in[b_l,b_u]\text{ by Mean Value Theorem}}\nonumber\\
	=~&\frac{\gamma}{\phiqr}\Big((g(\Wq^\prime(b_u)-\Wq^\prime(\alpha))-\kappa\Wq^\prime(b_u)\Big)\nonumber\\
	<~&0,
	\end{align}
	which contradicts with the assumption that ${\Gamma}_{b_l}(g)=0$. Hence, we have that ${\Gamma}_{b_l}(g)=0$ implies $\frac{\partial}{\partial g}{\Gamma}_{b_l}(g)>0$, which show that the root for 
	${\Gamma}_{b_l}(g)=0$ is unique, by the continuity of ${\Gamma}_{b_l}$. Moreover, the root is also continuous in $b_l$ because $\Gamma_{b_l}$ defined in (\ref{eqt.Gamma}), as a 2 parameter function, is continuous in $(b_u,b_l)$.
\end{proof}
\begin{remark}\label{remark.bu.geq.bstar}
	In view of the proof of Lemma \ref{lemma.bu.exist}, we have 
	\begin{equation}
{\Gamma}_{b_l}(b_u-b_l)=0\implies \frac{\partial}{\partial g}{\Gamma}_{b_l}(g)\bigr|_{g=b_u-b_l}>0\implies h(b_u)>0\implies b_u>b^*,
	\end{equation}
	which also implies that when the smoothness condition is met, we have
	\begin{equation}\label{ZDD.geq.0}
	\Zqr''(b_u)>0.
	\end{equation}
\end{remark}

For a $(b_u,b_l)$ strategy such that the smoothness condition (\ref{eqt.Gamma}) holds, we also call the strategy ``smooth $(b_u,b_l)$ strategy'' and denote it as $\pis$. In particular, Lemma \ref{lemma.bu.exist} assures its existence. If $\Voo\leq 1$, the smooth $(b_u,b_l)$ strategy is also called ``optimal $(b_u,b_l)$ strategy''. Otherwise, if $\Voo> 1$ and  $V_\kappa^\prime(b_l;\pis)=1$, we call the smooth $(b_u,b_l)$ strategy ``optimal $(b_u,b_l)$ strategy''. The notation for an optimal $(b_u,b_l)$ strategy is $\pio$. 

In the remaining of this paper, unless otherwise specified, when considering the properties of a smooth (resp. optimal) $(b_u,b_l)$ strategy, we assume that the barriers and the fixed transaction costs $\kappa$ are given. In this spirit, we denote its value function $\Vs$ (resp. $\Vo$). If the dependence on the barriers or the transaction costs need to be stressed, we write the value function explicitly as $V_\kappa(\cdot;\pis)$ (resp. $V_\kappa(\cdot;\pio)$).

\subsection{Existence of the lower barrier $b_l$ and liquidation at first opportunity strategies} \label{S_bl}

As explained earlier, if $X$ is of unbounded variation (e.g. if a diffusion component exists) then $b_l=0$ corresponds to a liquidation at first opportunity. This is because the surplus is ruined as soon as it reaches 0. On the other hand, if $X$ is of bounded variation then ruin does not occur when the surplus is 0 because of the spectrally negative nature of the surplus dynamics. These cases occur when $\Voo\leq 1$, as we have $\pi_{b_u,0}^{\kappa,*}$ by definition (which is $\pi_{b_u,0}^{\kappa,s}$). This is illustrated in Section \ref{sec.103}.

The existence of an ``optimal $(b_u,b_l)$ strategy'' requires more care when $\Voo> 1$. The following lemma asserts the existence of an optimal $(b_u,b_l)$ strategy.

\begin{lemma}\label{lemma.bl.exist2}
	If $\Voo> 1$, then there exist a $b_l\in(0,{b}^*)$ such that ${V}_\kappa^\prime(b_l;\pis)=1$.
\end{lemma}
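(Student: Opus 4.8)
The plan is to exhibit $b_l$ as a zero of the continuous function $g_{b_l} := V_\kappa'(b_l;\pi^{\kappa,s}_{b_u(b_l),b_l}) - 1$ on the interval $[0,b^*)$, where for each $b_l$ the upper barrier $b_u(b_l)$ is the unique value supplied by Lemma~\ref{lemma.bu.exist} so that $\pi^{\kappa,s}_{b_u(b_l),b_l}$ is well defined. Since that lemma also records that $b_u(b_l)$ depends continuously on $b_l$, and since the value function $V_\kappa(\cdot;\pi_{b_u,b_l})$ in Lemma~\ref{lemma.value.snlp} is built from scale functions (which are $\mathscr{C}^\infty$ under the completely-monotone density assumption) that depend continuously on the barrier parameters, the map $b_l\mapsto V_\kappa'(b_l;\pi^{\kappa,s}_{b_u(b_l),b_l})$ is continuous. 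So the whole argument reduces to an intermediate-value argument: I need $g_{b_l}\ge 0$ at one endpoint and $g_{b_l}<0$ at (or near) the other.

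The first endpoint is handled by hypothesis: at $b_l=0$ the smooth strategy $\pi^{\kappa,s}_{b_u,0}$ has, by the standing assumption of the lemma, $V_\kappa'(0;\pi^{\kappa,s}_{b_u,0}) = \Voo > 1$, i.e.\ $g_0 > 0$. For the other endpoint I would push $b_l$ up towards $b^*$. The key observation is that as $b_l\uparrow b^*$ the smoothness equation \eqref{eqt.Gamma}, namely $\Gamma_{b_l}(g)=0$ with $\Gamma_{b_l}(g)=(g-\kappa)\Zqr'(b_l+g)-\frac{\gamma}{\phiqr}(\Wq(b_l+g)-\Wq(b_l))$, together with the formula \eqref{eqt.value.snlp.lower} for $V_\kappa$ on the lower branch, forces $V_\kappa'(b_l;\pi^{\kappa,s}_{b_u,b_l})$ to drop below $1$. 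Concretely, on $x\le b_u$ we have $V_\kappa(x;\pi_{b_u,b_l}) = C\,\Wq(x)$ with $C = \gamma(\frac1{\phiqr}+g-\kappa)/(\phiqr\Zqr(b_u)-\gamma\Wq(b_l))$, so $V_\kappa'(b_l;\pi^{\kappa,s}_{b_u,b_l}) = C\,\Wq'(b_l)$; using the smoothness relation \eqref{eqt.Gamma} to eliminate $C$ one gets a clean expression for this derivative in terms of $\Wq'(b_l)$, $\Zqr'(b_u)$ and $g-\kappa$, and I would compare it with $1$ by comparing $\Zqr'(b_u)$ with $\frac{\gamma}{\phiqr}\Wq'(b_l)$. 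Recall from Remark~\ref{remark.bu.geq.bstar} that the smoothness condition forces $b_u>b^*$, hence $\Zqr''(b_u)>0$; combining this with the sign behaviour of $h$ from \eqref{result1.noba} and with $\Wq''<0$ on $(0,\bar b)$ (second item of Section~\ref{section.prelim.snlp}), one shows that when $b_l$ is close to $b^*$ the ratio tips the inequality the right way, giving $g_{b_l}<1-1=0$ — more carefully, I expect the cleanest route is to show directly that $g_{b_l}\to$ a nonpositive limit (indeed that at $b_l=b^*$ the smooth $(b_u,b_l)$ strategy degenerates into the barrier strategy $\pi_{b^*}$ of Section~\ref{section.prelim.snlp}, for which $V_0'(b^*;\pi_{b^*})\le 1$ by \eqref{ineq.vDbstar}).

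The main obstacle is precisely this last sign computation near $b_l=b^*$: one must show that the derivative $V_\kappa'(b_l;\pi^{\kappa,s}_{b_u(b_l),b_l})$, after substituting the smoothness constraint, is $\le 1$ there, and this requires the interplay of three facts — $b_u(b_l)>b^*$ always, the monotonicity of $\Wq'$ dictated by the location of $\bar b$, and the defining sign of $h$ (equivalently $\Zqr''$). I would organise it by first deriving the identity $V_\kappa'(b_l;\pi^{\kappa,s}_{b_u,b_l}) = 1 + \frac{\gamma}{\phiqr}\big(\Wq'(b_l) - (\text{something})\big)/(g-\kappa)\cdot(\cdots)$ from \eqref{eqt.Gamma} and \eqref{eqt.value.snlp.lower}, then invoking \eqref{result1.noba} and \eqref{ineq.vDbstar}. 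Once both endpoint signs are in hand, continuity of $b_l\mapsto g_{b_l}$ on $[0,b^*)$ and the intermediate value theorem deliver a root $b_l\in(0,b^*)$ with $V_\kappa'(b_l;\pi^{\kappa,s}_{b_u,b_l})=1$, which is exactly the asserted optimal $(b_u,b_l)$ strategy; strict positivity of this $b_l$ follows since $g_0>0$ and continuity.
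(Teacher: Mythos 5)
Your overall strategy---continuity of $b_l\mapsto V_\kappa'(b_l;\pis)$ (via Lemma \ref{lemma.bu.exist} and \eqref{eqt.value.snlp.lower}) plus an intermediate value argument between $b_l=0$, where $V_\kappa'>1$ by hypothesis, and the region near $b^*$---is exactly the paper's, and you also identify the correct comparison, namely $\Zqr'(b_u)$ against $\frac{\gamma}{\phiqr}\Wq'(b_l)$. However, there is a genuine gap at the second endpoint, and the one concrete route you offer for closing it is incorrect. The parenthetical claim that ``at $b_l=b^*$ the smooth $(b_u,b_l)$ strategy degenerates into the barrier strategy $\pi_{b^*}$'' cannot hold: the smoothness condition forces $b_u-b_l>\kappa>0$, and by Remark~\ref{remark.bu.geq.bstar} we always have $b_u>b^*$ strictly. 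So the two barriers never coincide while $\kappa>0$; the value function one gets is $V_0(\cdot;\pi_{b_u})$ with $b_u>b^*$, not $V_0(\cdot;\pi_{b^*})$, and \eqref{ineq.vDbstar} (which is a statement about the barrier $b^*$ and only for $x\geq b^*$) does not directly apply. You therefore cannot get the endpoint sign from a degeneration-to-barrier argument.

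What you actually need---and what the paper proves---is the stronger pointwise statement that $V_\kappa'(b_l;\pis)<1$ for \emph{every} $b_l\geq b^*$, not merely in a limit as $b_l\uparrow b^*$. Using \eqref{eqt.reduce.to.barrier} together with \eqref{value.b.lower}, this is equivalent to $0<\phiqr\Zqr'(b_u)-\gamma\Wq'(b_l)$. The missing algebraic step is the decomposition
\[
\phiqr\Zqr'(b_u)-\gamma\Wq'(b_l)
=\bigl(\phiqr\Zqr'(b_l)-\gamma\Wq'(b_l)\bigr)+\phiqr\bigl(\Zqr'(b_u)-\Zqr'(b_l)\bigr)
=\Zqr\pp(b_l)+\phiqr\bigl(\Zqr'(b_u)-\Zqr'(b_l)\bigr),
\]
where the identity $\phiqr\Zqr'-\gamma\Wq'=\Zqr\pp$ is the one underlying the definition of $h$ in Section~\ref{section.prelim.snlp}. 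For $b_l\geq b^*$, \eqref{result1.noba} gives $h\geq 0$ on $[b_l,\infty)$, hence $\Zqr\pp(b_l)\geq 0$ and $\Zqr'$ is strictly increasing beyond $b^*$, so $\Zqr'(b_u)>\Zqr'(b_l)$ since $b_u>b_l\geq b^*$. The sum is thus strictly positive, which delivers the endpoint sign. With that in hand your intermediate value argument goes through and yields a root $b_l\in(0,b^*)$ as claimed.
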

\begin{proof}
	First, notice that $b_l\mapsto {V}_\kappa^\prime(b_l;\pis)$ is continuous in $b_l$ due to Lemma \ref{lemma.bu.exist} and \eqref{eqt.value.snlp.lower}. Hence, by continuity it suffices to show that 
	\begin{equation}
	{V}_{b_l}^\prime(b_l)<1~\mbox{for all }b_l\geq {b}^*,
	\end{equation}
	which is equivalent to
	\begin{align*}
	0<~&\phiqr \Zqr^\prime(b_u)-\gamma\Wq^\prime(b_l)\\
	=~&\phiqr \Zqr^\prime(b_l)-\gamma\Wq^\prime(b_l)+\phiqr (\Zqr^\prime(b_u)-\Zqr^\prime(b_l))\\
	=~&\Zqr\pp(b_l)+\phiqr (\Zqr^\prime(b_u)-\Zqr^\prime(b_l))
	\end{align*}
	for all $b_l\geq {b}^*$. This is true because $b_l\geq {b}^*\implies h(x)\geq 0,~x\geq b_l\implies  \Zqr\pp(b_l)\geq 0$ and $\Zqr'(b_u)>\Zqr'(b_l)$, implying that the sum is positive.
\end{proof}

We have now proved the existence of an optimal $(b_u,b_l)$ strategy. Combining Lemma \ref{lemma.bl.exist2} with Remark \ref{remark.bu.geq.bstar}, for a $\pio$, we have
\begin{equation}\label{ineq.bs}
b_l<{b}^*<b_u.
\end{equation}

\section{The derivative of the value function, $\Vo$}
The existence of a $\pio$ strategy was shown in the previous sections. In this section, we investigate the properties of the value function of a given $\pio$ to prepare for the proof of its optimality. Our goal is to show (\ref{der.vbl}). 

We start by showing that 
\begin{equation}
\Vo^\prime(x)>1,~ x<b_l\mbox{ if }b_l>0,
\end{equation}
which is a direct consequence of (\ref{ineq.bs}). To be more specific, $\Wq$ is decreasing on $[0,\bar{b}]$ and $b^*<\bar{b}$, implying that $\Wq$ is decreasing on $[0,b_l]$. For a periodic $(b_u,b_l)$ strategy, the lower branch of the value function given by (\ref{eqt.value.snlp.lower}) is proportional to $\Wq$, hence is decreasing to $\Vo'(b_l)=1$ on $[0,b_l]$. 

Next, We show that the following lemma holds.
\begin{lemma}\label{lemma.vblPbu.leq1}
The inequality $\Vo^\prime(b_u)<1$ holds.
\end{lemma}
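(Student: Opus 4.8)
The plan is to use the smoothness condition together with the sign information on the scale functions that we have accumulated. Recall that for an optimal $(b_u,b_l)$ strategy the smoothness condition $\Gamma_{b_l}(b_u-b_l)=0$ holds, and by Remark \ref{remark.bu.geq.bstar} this forces $b_u>b^*$ and $\Zqr''(b_u)>0$. On the region $x\geq b_u$ the value function $\Vo$ coincides with the value function of a periodic barrier strategy at level $b_u$ without transaction costs, i.e. $\Vo(x)=V_0(x;\pi_{b_u})$ by \eqref{eqt.reduce.to.barrier}. So it suffices to prove $V_0^\prime(b_u;\pi_{b_u})<1$, and this is exactly the type of statement controlled by item 3 of Section \ref{section.prelim.snlp}, which gives $V_0^\prime(x;\pi_b)=K+\gamma\frac{\int_0^\infty e^{-tx}g(t,b)\mu_{q+r}(dt)}{\phi_{q+r}\Zqr^\prime(b)}$ with $K\in(0,1)$.

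First I would evaluate that representation at $x=b=b_u$, reducing the claim $\Vo^\prime(b_u)<1$ to showing that the integral term is bounded by $1-K$; more robustly, I expect the cleanest route is to invoke item 5 of Section \ref{section.prelim.snlp} (or the corresponding monotonicity/shape statement), which gives $V_0^\prime(x;\pi_{b^*})<1$ for $x\geq b^*$, and then transfer this to the barrier level $b_u$. Since $b_u>b^*$, the point $b_u$ lies in the range where the ``no-cost'' barrier value function at its own barrier has derivative strictly below $1$: concretely, $V_0^\prime(b;\pi_b)$ as a function of the barrier $b$ is known to be decreasing past $b^*$ (this is essentially how $b^*$ is characterised — it is the largest barrier at which the derivative equals $1$, via $h$), so $V_0^\prime(b_u;\pi_{b_u})\leq V_0^\prime(b^*;\pi_{b^*})=1$ with strict inequality because $b_u>b^*$ and $\Zqr''(b_u)>0$ rules out the boundary case.

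Alternatively — and this may be the more self-contained argument given what is stated — I would differentiate the identity $\Vo^\prime(b_u)=\Vo^\prime(b_l)=1$ is not available, so instead use the explicit upper-branch formula. Writing out $\Vo^\prime(b_u)$ from \eqref{eqt.reduce.to.barrier} and using the defining relation $\phi_{q+r}\Zqr'(b_u)$ in the denominator, the inequality $\Vo^\prime(b_u)<1$ should be equivalent to a statement of the form $\Zqr''(b_u)>0$ together with a sign condition on a combination of $\Wq,\Wq'$ at $b_u$; the former we already have from Remark \ref{remark.bu.geq.bstar}, and the latter follows from $b_u>b^*$ via \eqref{result1.noba} (which makes $h(b_u)>0$, i.e. $\Zqr''(b_u)>0$) combined with the fact that $K<1$ in item 3. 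The main obstacle I anticipate is bookkeeping: correctly expressing $\Vo^\prime(b_u)$ in terms of $\Zqr',\Zqr'',\Wq,\Wq'$ evaluated at $b_u$, cancelling the common factor $(\tfrac{1}{\phi_{q+r}}+g-\kappa)$ (which is positive since $g>\kappa$), and verifying that the residual inequality is precisely $h(b_u)>0$ plus $K<1$ — after which the conclusion is immediate. I do not expect any genuinely new estimate to be needed beyond the shape results of Section \ref{section.prelim.snlp} and Remark \ref{remark.bu.geq.bstar}.
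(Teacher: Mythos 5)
Your third alternative is exactly the paper's proof, and the equivalence is cleaner than you anticipate: using \eqref{eqt.reduce.to.barrier} and differentiating at $x=b_u$, the barrier-terms cancel and one gets $\Vo'(b_u)=\gamma\Wq'(b_u)/\bigl(\phiqr\Zqr'(b_u)\bigr)$, so $\Vo'(b_u)<1\iff\phiqr\Zqr'(b_u)>\gamma\Wq'(b_u)\iff\Zqr''(b_u)>0$ (via the identity $\Zqr''(x)=\phiqr\Zqr'(x)-\gamma\Wq'(x)$), and the last inequality is precisely \eqref{ZDD.geq.0} from Remark \ref{remark.bu.geq.bstar}. No appeal to item~3 of Section~\ref{section.prelim.snlp} (the constant $K$), no extra ``sign condition on $\Wq,\Wq'$'', and no use of $K<1$ are needed — the chain is an exact equivalence. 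Your second-paragraph route, which treats $V_0'(b;\pi_b)$ as monotone in the barrier $b$ past $b^*$, is not established anywhere in the paper (item~5 controls $V_0'(x;\pi_{b^*})$ in $x$ for the \emph{fixed} barrier $b^*$, not across barriers) and would require its own proof, so the direct computation is the route the paper's preliminaries actually support.
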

\begin{proof}
	Using (\ref{eqt.reduce.to.barrier}), we have
	\begin{align*}
	\iff&\Vo^\prime(b_u)<1\\
	\iff&\phiqr \Zqr^\prime(b_u)>\gamma\Wq^\prime(b_u)\\
	\iff&\Zqr\pp(b_u)>0,
	\end{align*}
	where the last line is true from (\ref{ZDD.geq.0}).
\end{proof}

Due to the shape of $\Wq^\prime$, i.e. the second item in Section \ref{section.prelim.snlp}, we have the following corollary.
\begin{corollary}\label{Corr.vD.leq1}
For $x\in(b_l,b_u]$,
	\begin{equation}
	\Vo^\prime(x)<1.
	\end{equation}
\end{corollary}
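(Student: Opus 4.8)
The plan is to leverage the explicit representation of the derivative of the value function on the upper branch together with the monotonicity structure of $\Wq^\prime$ recalled in the second item of Section \ref{section.prelim.snlp}. Recall from \eqref{eqt.reduce.to.barrier} that, once the smoothness condition holds, $\Vo$ coincides with $V_0(\cdot;\pi_{b_u})$ on the whole real line, so its derivative on $[b_l,b_u]$ is governed by the same quantity that appeared in the proof of Lemma \ref{lemma.vblPbu.leq1}. For $x\in(b_l,b_u]$ the equivalence there reads
\[
\Vo^\prime(x)<1 \iff \phiqr\Zqr^\prime(x) > \gamma\Wq^\prime(x) \iff \Zqr^{\prime\prime}(x)>0 \iff h(x)>0,
\]
since $\Zqr^{\prime\prime}(x)=e^{\phiqr x}h(x)$ and $e^{\phiqr x}>0$. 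So the corollary reduces to showing $h(x)>0$ for every $x\in(b_l,b_u]$.

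First I would invoke \eqref{ineq.bs}, namely $b_l<b^*<b_u$. If in fact $x>b^*$, then by the dichotomy \eqref{result1.noba} we immediately get $h(x)>0$ (we already know from Remark \ref{remark.bu.geq.bstar} that $b^*>0$ exists and that $h(b_u)>0$). This covers the sub-interval $(b^*,b_u]$. The remaining case is $x\in(b_l,b^*]$, where \eqref{result1.noba} gives $h(x)\leq 0$, so a direct appeal to the dichotomy is not enough and one instead argues that this sub-interval, for an \emph{optimal} $(b_u,b_l)$ strategy, is in fact empty or contributes nothing — that is, one must use that $b_l$ was chosen so that $\Vo^\prime(b_l)=1$ (when $\Voo>1$) or $b_l=0$ (when $\Voo\le 1$), together with the fact that $V_{b_l}$ on $[b_l,b_u]$ cannot recross the level $1$ in between.

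This is where the main work lies. The cleanest route is to exploit that $\Vo^\prime$ on $(b_l,b_u]$ is, up to a positive multiplicative constant, equal to $\phiqr\Zqr^\prime(x)-\gamma\Wq^\prime(x)=\Zqr^{\prime\prime}(x)/$ (something positive) $=e^{\phiqr x}h(x)\cdot(\text{const})$ — wait, more carefully: differentiating \eqref{eqt.value.snlp.lower} versus \eqref{eqt.reduce.to.barrier} shows $\Vo^\prime$ restricted to the lower branch is proportional to $\Wq^\prime$, which on $[0,\bar b]\supset[0,b^*]$ is decreasing (second item of Section \ref{section.prelim.snlp}); hence on $(b_l,b^*]$ one checks $\Wq^\prime$ is still decreasing, so $\Vo^\prime$ on that sub-interval is monotone and pinned at $\Vo^\prime(b_l)=1$ from the left (or starts below $1$ if $b_l=0$, using $\Voo\le 1$), forcing $\Vo^\prime(x)<1$ on $(b_l,b^*]$. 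Then patch with the $(b^*,b_u]$ case above. The obstacle to watch is the endpoint bookkeeping at $b^*$ and ensuring the strict inequality (not just $\le$) propagates; the monotonicity of $\Wq^\prime$ on $[0,\bar b]$ and the strictness $\Wq^{\prime\prime\prime}>0$ from the second item of Section \ref{section.prelim.snlp} are exactly what rule out equality, so I would keep those strict facts front and centre throughout.
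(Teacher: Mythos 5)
Your opening reduction, ``$\Vo'(x)<1 \iff \phiqr\Zqr'(x) > \gamma\Wq'(x) \iff h(x)>0$,'' is not valid for $x<b_u$. On the lower branch, \eqref{eqt.reduce.to.barrier} together with \eqref{value.b.lower} gives $\Vo(x)=\frac{\gamma}{\phiqr\Zqr'(b_u)}\Wq(x)$ for $x\le b_u$, so $\Vo'(x)<1$ is equivalent to $\gamma\Wq'(x)<\phiqr\Zqr'(b_u)$, a comparison against the \emph{constant} $\Zqr'(b_u)$, not against $\Zqr'(x)$. These coincide only when $x=b_u$, which is precisely the case that Lemma \ref{lemma.vblPbu.leq1} treats. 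Consequently $h(x)>0$ on $(b^*,b_u]$ does not by itself give $\Vo'(x)<1$; you still need $\Zqr'(x)\le\Zqr'(b_u)$ to chain the inequalities. That step is salvageable (since $\Zqr''>0$ on $(b^*,b_u]$ makes $\Zqr'$ increasing there), but as written your argument for the sub-interval $(b^*,b_u]$ has a gap.

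The paper's proof never invokes $h$ at all. Because $\Vo'$ on $(0,b_u]$ equals $\Wq'$ times the fixed positive constant $\gamma/(\phiqr\Zqr'(b_u))$, the shape of $\Wq'$ — strictly decreasing on $(0,\bar b)$, strictly increasing on $(\bar b,\infty)$, by the second item of Section \ref{section.prelim.snlp} — transfers directly to $\Vo'$. The supremum of a strictly-U-shaped function on $(b_l,b_u]$ is controlled by the endpoints, and you have $\Vo'(b_l)\le 1$ (equal to $1$ if $b_l>0$, or $\Voo\le 1$ if $b_l=0$) and $\Vo'(b_u)<1$ from Lemma \ref{lemma.vblPbu.leq1}; strict monotonicity on each piece then yields $\Vo'(x)<1$ on the whole of $(b_l,b_u]$. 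The paper's two-case split ($b_u\le\bar b$ versus $b_u>\bar b$) simply decides whether both monotone pieces lie inside the interval. Your split at $b^*$ rather than $\bar b$ is the wrong breakpoint: $b^*$ governs the sign of $h$, but the breakpoint that governs the shape of $\Vo'$ is $\bar b$, and switching to $\bar b$ removes the need for the problematic $h$-based equivalence entirely.
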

\begin{proof}
	Here we have 2 cases. If $b_u\leq \bar{b}$, then the derivative of $\Wq$ is decreasing on $[0,b_u]$. Hence, by (\ref{eqt.value.snlp.lower}) $\Vo$ is also decreasing on $[0,b_u]$. In particular, we have $\Vo'(x)<\Vo'(b_l)\leq 1$ for $x\in(b_l,b_u]$.
	
	On the other hand if $b_u>\bar{b}$, then the derivative of $\Wq$ is decreasing on $(0,\bar{b})$ and increasing on $(\bar{b},b_u]$. Similar to the previous case, we have $\Vo'(x)<\max\{\Vo'(b_l),\Vo'(b_u)\}\leq 1$ for $x\in(b_l,b_u]$.
\end{proof}

Next, we want to show the derivative of the value function is less than one beyond $b_u$, which is the consequence of the following lemma because of (\ref{ineq.vDbstar}).

\begin{lemma}\label{lemma.10}
	Recall that ${V}_0(x;{\pi}_b)$ is the value function of a periodic barrier strategy at barrier level $b$ without transaction costs. For  $x\geq b_u(\geq b^*)$, we have
	\begin{equation}\label{lemma.vDleq1}
	\Vo^\prime(x)\leq {V}^\prime_0(x;{\pi}_{{b}^*}).
	\end{equation}
\end{lemma}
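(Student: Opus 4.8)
The plan is to compare the two value functions $\Vo$ (the optimal $(b_u,b_l)$ strategy with transaction cost $\kappa$) and $V_0(\cdot;\pi_{b^*})$ (the optimal periodic barrier strategy without transaction costs) directly on $[b_u,\infty)$ by exploiting the fact that on their respective upper branches both are expressed via the formula in \eqref{eqt.reduce.to.barrier}, and that Section~\ref{section.prelim.snlp} gives an \emph{explicit} integral representation of the derivative $V_0^\prime(x;\pi_b)$ in terms of the completely monotone pieces $\mu_{q+r}$ (items 1 and 3 there). First I would write, for $x\geq b_u$,
\[
\Vo^\prime(x)=K_{b_u}+\gamma\,\frac{\int_0^\infty e^{-tx}g(t,b_u)\,\mu_{\gamma+\delta}(dt)}{\phiqr\Zqr^\prime(b_u)},\qquad
V_0^\prime(x;\pi_{b^*})=K_{b^*}+\gamma\,\frac{\int_0^\infty e^{-tx}g(t,b^*)\,\mu_{\gamma+\delta}(dt)}{\phiqr\Zqr^\prime(b^*)},
\]
using item~3 of Section~\ref{section.prelim.snlp} (valid here because $\Vo$ reduces to a barrier-type value function by Lemma~\ref{lemma.value.snlp}, and because $b_u>b^*\geq 0$). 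The constant $K$ there is the same for both (it is $\gamma\,\partial_y\phi(y)|_{y=\gamma+\delta}$-type quantity independent of the barrier level), so the difference $V_0^\prime(x;\pi_{b^*})-\Vo^\prime(x)$ is a single integral against $\mu_{\gamma+\delta}(dt)$ of
\[
e^{-tx}\left(\frac{g(t,b^*)}{\phiqr\Zqr^\prime(b^*)}-\frac{g(t,b_u)}{\phiqr\Zqr^\prime(b_u)}\right).
\]
Since $\mu_{\gamma+\delta}$ is a nonnegative measure and $e^{-tx}>0$, it suffices to show the bracketed coefficient is nonnegative for every $t>0$, i.e. that $b\mapsto g(t,b)/\Zqr^\prime(b)$ is nonincreasing in $b$ on $[b^*,\infty)$, or at least that its value at $b^*$ dominates its value at $b_u$.

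The key computation is therefore to differentiate $b\mapsto g(t,b)/\Zqr^\prime(b)$ and control its sign. Using the explicit $g(t,b)=t+\gamma\Wq(0)+\gamma\int_0^b e^{ut}(\Wq^\prime(u)-\tfrac{\phiqr}{\gamma}\Zqr^\prime(b))\,du-\tfrac{\phiqr}{t}\Zqr^\prime(b)$, one gets $\partial_b g(t,b)=\gamma e^{bt}(\Wq^\prime(b)-\tfrac{\phiqr}{\gamma}\Zqr^\prime(b))-\gamma\Zqr^{\prime\prime}(b)\int_0^b e^{ut}\,du-\tfrac{\phiqr}{t}\Zqr^{\prime\prime}(b)$, and recalling $\phiqr\Zqr^\prime(b)-\gamma\Wq^\prime(b)=\Zqr^{\prime\prime}(b)$ (the identity used already in the proof of Lemma~\ref{lemma.bu.exist}), the first term is $-e^{bt}\Zqr^{\prime\prime}(b)$. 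Thus $\partial_b\big(g(t,b)/\Zqr^\prime(b)\big)$ is a combination of $\Zqr^{\prime\prime}(b)$ terms divided by $\Zqr^\prime(b)^2$ plus a $g(t,b)\Zqr^{\prime\prime}(b)$ term; on $[b^*,\infty)$ we have $h(b)\geq 0$ hence $\Zqr^{\prime\prime}(b)=e^{\phiqr b}h(b)\geq 0$ by \eqref{result1.noba} and the definition of $h$, and $\Zqr^\prime>0$, so the sign reduces to checking that the $g(t,b)$ prefactor and the remaining coefficients cooperate. I expect that after collecting terms the derivative has the clean form $-\Zqr^{\prime\prime}(b)/(t\,\Zqr^\prime(b)^2)\cdot\big(\text{positive}\big)\leq 0$, giving monotonicity; alternatively one rewrites the coefficient so that the inequality at $b=b^*$ versus $b=b_u$ becomes manifest from $\Zqr^{\prime\prime}(b^*)=0$ (if $h(0)\le 0$, so $b^*$ is the zero of $h$) and $\Zqr^{\prime\prime}(b_u)>0$ (Remark~\ref{remark.bu.geq.bstar}).

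The main obstacle I anticipate is precisely this sign analysis of $\partial_b\big(g(t,b)/\Zqr^\prime(b)\big)$: the expression for $g(t,b)$ mixes an exponential-in-$t$ integral term with a $1/t$ term, so the coefficient of $\mu_{\gamma+\delta}(dt)$ is not obviously single-signed for all $t$, and one must use the complete monotonicity of $\Wq$ (item~1) together with the location of $\bar b$ and $b^*$ to force the sign — in particular the fact, from \eqref{ineq.bs} and $b^*<\bar b$, that $\Wq^\prime$ and $\Zqr^{\prime\prime}$ behave monotonically in the relevant ranges. A cleaner route, which I would try first, is to avoid the $t$-integral entirely: bound $\Vo^\prime(x)-V_0^\prime(x;\pi_{b^*})$ by showing $\Vo-V_0(\cdot;\pi_{b^*})$ is concave (or has nonincreasing difference of derivatives) on $[b_u,\infty)$ using that both satisfy $(\mathscr L-\delta)V+\gamma(\text{const})\cdot(\dots)=0$-type relations on that region, but I suspect the integral-representation comparison above is the intended and most robust argument, with the $\Zqr^{\prime\prime}\ge 0$ on $[b^*,\infty)$ fact doing the heavy lifting.
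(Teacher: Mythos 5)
Your setup is correct and matches the paper's: both use item~3 of Section~\ref{section.prelim.snlp}, note that $\Vo$ reduces to $V_0(\cdot;\pi_{b_u})$ by \eqref{eqt.reduce.to.barrier}, observe that $K$ cancels, and thereby reduce \eqref{lemma.vDleq1} to comparing $g(t,b_u)/\Zqr'(b_u)$ with $g(t,b^*)/\Zqr'(b^*)$. But from that point on your proposed route diverges from the paper's, and I think it has a genuine gap: you try to establish the comparison by proving $b\mapsto g(t,b)/\Zqr'(b)$ is monotone on $[b^*,\infty)$ via differentiation in $b$, and you candidly admit you have not managed to close the sign analysis. More importantly, your route makes no use at all of the \emph{lower} barrier $b_l$ or of the optimality condition $V_\kappa'(b_l;\pio)\leq 1$. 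That condition is the linchpin of the paper's argument: it yields $\phi_{\gamma+\delta}\Zqr'(b_u)\geq \gamma W_\delta'(b_l)$, which is the very first estimate
\[
\int_{0}^{b_u}e^{ut}\big(W_\delta'(u)-\tfrac{\phi_{\gamma+\delta}}{\gamma}\Zqr'(b_u)\big)du
\ \leq\ \int_{0}^{b_u}e^{ut}\big(W_\delta'(u)-W_\delta'(b_l)\big)du,
\]
after which the paper splits at $b_l$ and exploits the shape of $W_\delta'$ (decreasing on $(0,\bar b)$, and $W_\delta'(u)\leq W_\delta'(b_l)$ on $[b_l,b_u]$ from Corollary~\ref{Corr.vD.leq1}) together with $b_l<b^*<b_u$ to chain to the $b^*$ expression. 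None of this is reachable if you only compare $b_u$ directly against $b^*$.

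The deeper issue is that the lemma is not a pure monotonicity-in-$b$ statement: it holds for the \emph{particular} $b_u$ that is paired with $b_l$ through the smoothness and optimality conditions, not for an arbitrary barrier above $b^*$. Your derivative calculation would (if it closed) prove the stronger claim that $g(t,\cdot)/\Zqr'(\cdot)$ is nonincreasing on all of $[b^*,\infty)$, which is neither asserted nor used in the paper, and the computation you sketch leaves sign-indefinite terms (the $1/t$ piece versus the exponential-in-$t$ integral piece) precisely because you lack the $b_l$-based bound on $\Zqr'(b_u)$ that makes the paper's inequalities single-signed. Your alternative suggestion (concavity of the difference $\Vo-V_0(\cdot;\pi_{b^*})$) is even farther from the paper and would run into the same problem: nothing in that approach uses the defining property of $\pio$. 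To repair the argument, bring in $V_\kappa'(b_l;\pio)\leq 1$, translate it into $\phi_{\gamma+\delta}\Zqr'(b_u)\geq\gamma W_\delta'(b_l)$, and then compare to $b^*$ through $b_l$ as the paper does, splitting the $[0,b_u]$ integral at $b_l$ and treating the cases $b^*=0$ and $b^*>0$ separately.
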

\begin{proof}
	Recall that for $x\geq b$, we have
	\begin{equation*}
	{V}_0^\prime(x;{\pi}_b)=K+\gamma\frac{\int_{0}^{\infty}e^{-tx}g(t,b)\mu_{q+r}(dt)}{\phi_{q+r}\Zqr^\prime(b)},
	\end{equation*}
	where $K\in(0,1)$ and 
	\begin{equation*}
	g(t,b)=t+\gamma\Wq(0)+\gamma\int_{0}^{b}e^{ut}\big(\Wq^\prime(u)-\frac{\phi_{q+r}}{\gamma}\Zqr^\prime(b)\big)du-\frac{\phi_{q+r}}{t}\Zqr^\prime(b).
	\end{equation*}
	Since $\pio$ is also a smooth $(b_u,b_l)$ strategy, we have from (\ref{eqt.reduce.to.barrier}) $\Vo(x)={V}_0(x;{\pi}_{b_u})$. Hence, in order to show (\ref{lemma.vDleq1}), it suffices to show that 
	\begin{equation*}
	\frac{g(t,b_u)}{\Zqr^\prime(b_u)}\leq \frac{g(t,{b}^*)}{\Zqr^\prime({b}^*)}
	\end{equation*}
	holds. Note that we have $\Zqr\pp(b_u)>0$ from (\ref{ZDD.geq.0}), which implies $\Zqr^\prime({b}^*)<\Zqr^\prime(b_u)$. Hence, it suffices to show 
	\begin{equation}
	\int_{0}^{b_u}e^{ut}\big(\Wq^\prime(u)-\frac{\phi_{q+r}}{\gamma}\Zqr^\prime(b_u)\big)du\leq \int_{0}^{{b}^*}e^{ut}\big(\Wq^\prime(u)-\frac{\phi_{q+r}}{\gamma}\Zqr^\prime({b}^*)\big)du.
	\end{equation}
	Recall that we have $V_\kappa^\prime(b_l;\pio)\leq 1$ by the definition of $\pio$. By direct computing using (\ref{value.b.lower}), we have 
	$-\phi_{q+r}\Zqr^\prime(b_u)\leq -\Wq^\prime(b_l)$. In addition, we have  $\Wq^\prime(b_l)>\Wq^\prime({b}^*)$ and $\Wq^\prime(u)\leq \Wq^\prime(b_l),~u\in[b_l,b_u]$, due to the shape of the scale function $\Wq$, see the second item in Section \ref{section.prelim.snlp}. If ${b}^*=0$, we have $b_l=0$ and hence
	\begin{align*}
	\int_{0}^{b_u}e^{ut}\big(\Wq^\prime(u)-\frac{\phi_{q+r}}{\gamma}\Zqr^\prime(b_u)\big)du=~&\int_{b_l}^{b_u}e^{ut}\big(\Wq^\prime(u)-\frac{\phi_{q+r}}{\gamma}\Zqr^\prime(b_u)\big)du\\\leq~& \int_{b_l}^{b_u}e^{ut}\big(\Wq^\prime(u)-\Wq^\prime(b_l)\big)du\\<~&0\\=~&\int_{0}^{{b}^*}e^{ut}\big(\Wq^\prime(u)-\frac{\phi_{q+r}}{r}\Zqr^\prime({b}^*)\big)du.
	\end{align*}
	On the other hand, if ${b}^*>0$, we have
	\corr{\begin{align*}
	\int_{0}^{b_u}e^{ut}\big(\Wq^\prime(u)-&\frac{\phi_{q+r}}{\gamma}\Zqr^\prime(b_u)\big)du
	~\leq~\int_{0}^{b_u}e^{ut}\big(\Wq^\prime(u)-\Wq^\prime(b_l)\big)du\\
	=~&\int_{0}^{b_l}e^{ut}\big(\Wq^\prime(u)-\Wq^\prime(b_l)\big)du+\int_{b_l}^{b_u}e^{ut}\big(\Wq^\prime(u)-\Wq^\prime(b_l)\big)du\\
	<~&\int_{0}^{b_l}e^{ut}\big(\Wq^\prime(u)-\Wq^\prime(b)\big)du+\int_{b_l}^{b_u}e^{ut}\big(\Wq^\prime(u)-\Wq^\prime(b_l)\big)du\\
	<~&\int_{0}^{b}e^{ut}\big(\Wq^\prime(u)-\Wq^\prime(b)\big)du+\int_{b_l}^{b_u}e^{ut}\big(\Wq^\prime(u)-\Wq^\prime(b_l)\big)du\\
	<~&\int_{0}^{{b}^*}e^{ut}\big(\Wq^\prime(u)-\Wq^\prime({b}^*)\big)du
	~=~\int_{0}^{{b}^*}e^{ut}\big(\Wq^\prime(u)-\frac{\phi_{q+r}}{\gamma}\Zqr^\prime({b}^*)\big)du.
	\end{align*}}
\end{proof}

To sum up, we have 
\begin{equation}\label{der.vbl}
\begin{cases}
{V}_{b_l^*}'(x)\leq 1,~x\geq 0,~&\mbox{if }b_l=0,\\
{V}_{b_l^*}'(x)\begin{cases}
&>1,~x<b_l\\
&=1,~x=b_l\\
&<1,~x>b_l
\end{cases},~&\mbox{if }b_l>0.
\end{cases}
\end{equation}
The first case ($b_l=0$) corresponds to a liquidation at first opportunity when $X$ is of unbounded variation (see Section \ref{S_bl}).

\section{Optimality}\label{section.optimal.snlp}

In this section, we verify that the strategy $\pio$ is optimal by arguing that all conditions in Lemma \ref{lemma.ver} are satisfied by its value function. 

First, recall from (\ref{eqt.reduce.to.barrier}) that when the smoothness condition is met, the value function at barrier level $b_u$ is the same as the value function of a periodic barrier strategy without transaction costs. It was shown in \citet*{NoPeYaYa17} that the value function of a periodic barrier strategy belongs to the class of $\mathscr{C}^2(0,\infty)$ (resp. $\mathscr{C}^1(0,\infty)$) if $X$ is of unbounded (resp. bounded) variation. Therefore, we conclude that the value function of the strategy $\pio$ is smooth (see Section \ref{section.verification.snlp} for the definition of smoothness). %Moreover, from (\ref{eqt.value.snlp.lower}), we can see that the value function is directly proportional to $\Wq$, whose derivative is finite on sets $[1/n,n]$ for $n\in\mathbb{N}$. 
The second condition is satisfied directly from the definition of value function.
Conditions 3 can be shown to be met by proceeding in a similar fashion to Lemma 9.5 in \citet*{AvLaWo20d}, given the range of the derivative of the value function specified in (\ref{der.vbl}).

To conclude, we present the following theorem and corollary.
\begin{theorem}
The optimal strategy $\pio$ is optimal, i.e. $V_\kappa(s;\pio)=v_\kappa(x)$, $x\geq 0$.
\end{theorem}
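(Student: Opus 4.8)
The plan is to verify that the value function $H(x) := V_\kappa(x;\pio)$ satisfies the three conditions of the verification lemma (Lemma \ref{lemma.ver}), so that optimality follows immediately. Most of the work has already been assembled in the preceding sections, so the proof is essentially a matter of stitching it together. First I would invoke \eqref{eqt.reduce.to.barrier}: since $\pio$ is in particular a smooth $(b_u,b_l)$ strategy, its value function coincides on $[0,\infty)$ with $V_0(\cdot;\pi_{b_u})$, the value function of a periodic barrier strategy at level $b_u$ without transaction costs. By the regularity result of \citet*{NoPeYaYa17} (recalled at the start of Section \ref{section.optimal.snlp}), that function is $\mathscr{C}^2(0,\infty)$ when $X$ is of unbounded variation and $\mathscr{C}^1(0,\infty)$ when $X$ is of bounded variation; hence condition 1 (smoothness) holds. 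Condition 2 ($H \geq 0$ on $\mathbb{R}$) is immediate: $H$ is a value function, hence nonnegative by property 4 of Remark \ref{remark.different.kappa.on.v}, and $H(x)=0$ for $x<0$ by \eqref{Veq0.xleq0}.

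The substance is condition 3, the HJB-type inequality
\[
(\mathscr{L}-\delta)H(x) + \gamma\max_{l\in[0,x]}\Big((l-\kappa)1_{\{l>0\}} + H(x-l) - H(x)\Big) \leq 0, \quad x \geq 0.
\]
Here I would split into the two branches of the strategy. For $x \in [b_u,\infty)$ the strategy pays $x - b_l$ at a decision time, so the natural first step is to show the supremum in the $\max$ is attained at $l = x - b_l$ and equals $(x-b_l-\kappa) + H(b_l) - H(x)$; the remaining identity $(\mathscr{L}-\delta)H(x) + \gamma\big((x-b_l-\kappa)+H(b_l)-H(x)\big) = 0$ then holds because this is exactly the equation $V_0(\cdot;\pi_{b_u})$ solves on $[b_u,\infty)$ (this is where the smoothness condition \eqref{eqt.Gamma}, equivalently \eqref{smooth.condition}, was engineered). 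For $x \in [0,b_u)$ the strategy pays nothing, so one needs $\max_{l\in[0,x]}\big((l-\kappa)1_{\{l>0\}} + H(x-l) - H(x)\big) \leq 0$ together with $(\mathscr{L}-\delta)H(x) = 0$ (the martingale/renewal identity on the continuation region). To handle the $\max$ on the lower branch one uses the derivative bounds collected in \eqref{der.vbl}: when $b_l = 0$ one has $H' \leq 1$ everywhere, so $l \mapsto (l-\kappa) + H(x-l)$ has nonpositive derivative and the map is maximised (over $l>0$) in a way that never beats $l=0$ once the fixed cost $\kappa$ is subtracted; when $b_l > 0$ one has $H' > 1$ below $b_l$ and $H' < 1$ above, so $l \mapsto (l-\kappa)+H(x-l)$ is unimodal and again its value at any $l \in (0,x]$ is strictly less than $H(x)$ (using $\kappa>0$ and the strict inequalities). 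This is exactly the computation carried out in Lemma 9.5 of \citet*{AvLaWo20d}, which I would cite and adapt rather than reproduce in full.

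I would also, for completeness, check the boundary point $x=b_u$ and the region just above it: the derivative facts $\Vo'(b_u) < 1$ (Lemma \ref{lemma.vblPbu.leq1}) and $\Vo'(x) \leq V_0'(x;\pi_{b^*}) < 1$ for $x \geq b_u$ (Lemma \ref{lemma.10} combined with \eqref{ineq.vDbstar}) guarantee that on the upper branch the only profitable payment is indeed the one down to $b_l$, so the $\max$ identity used above is genuine and not merely an inequality; this also confirms consistency at the junction $x=b_u$ where the two branch formulas must agree (which they do by the smoothness condition).

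The main obstacle, as in essentially all verification-lemma arguments of this type, is the case analysis establishing condition 3 on the lower branch $[0,b_u)$ — in particular showing that the concave/unimodal function $l \mapsto (l-\kappa)1_{\{l>0\}} + H(x-l)$ is dominated by $H(x)$ uniformly in $l \in [0,x]$, which relies crucially on the full derivative profile \eqref{der.vbl} and on $\kappa > 0$ to rule out small payments. The algebra that $(\mathscr{L}-\delta)H = 0$ holds on the continuation region and $(\mathscr{L}-\delta)H + \gamma(\text{dividend term}) = 0$ on the payment region is routine once one knows $H = V_0(\cdot;\pi_{b_u})$ and recalls how that function was derived via excursion theory / exit identities in \citet*{NoPeYaYa17,PeYa16b}; the corollary asserting $\pio$ attains $v_\kappa$ is then immediate from Lemma \ref{lemma.ver}.
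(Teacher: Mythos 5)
Your proposal follows exactly the same route as the paper: conditions 1 and 2 of the verification Lemma \ref{lemma.ver} are dispatched via \eqref{eqt.reduce.to.barrier}, the regularity of $V_0(\cdot;\pi_{b_u})$ from \citet*{NoPeYaYa17}, and nonnegativity of value functions; condition 3 is handled by splitting at $b_u$ and invoking the derivative profile \eqref{der.vbl} together with Lemma 9.5 of \citet*{AvLaWo20d}, which is precisely what the paper does.

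One small slip worth correcting in the $b_l=0$ branch: with $H'\leq 1$ the map $l\mapsto (l-\kappa)+H(x-l)$ has \emph{nonnegative} derivative $1-H'(x-l)\geq 0$, not nonpositive, so its supremum over $(0,x]$ sits at $l=x$ (the full-liquidation payment), not near $l=0$. To close that case you still need to show $(x-\kappa)+H(0)-H(x)\leq 0$ for $x<b_u$; this follows by integrating $H'\leq 1$ backward from $b_u$ and using the smoothness condition $H(b_u)-H(0)=b_u-\kappa$, which is the anchoring you already implicitly rely on elsewhere. With that corrected, the argument is sound and matches the paper.
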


\begin{corollary}\label{Coro.uniqueness}
	There is only 1 pair of $(b_u,b_l)$ which qualifies to be a $\pio$. We write the pair $({b}_u^*,{b}_l^*)$ and the strategy $\pi_{b_u^*,b_l^*}$ is optimal.
\end{corollary}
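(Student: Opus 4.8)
The plan is to bootstrap off the preceding optimality theorem: since $v_\kappa$ is a single, well-defined function (the supremum in \eqref{def.optimal.value.function}), any two pairs $(b_u^{(1)},b_l^{(1)})$ and $(b_u^{(2)},b_l^{(2)})$ that both qualify as an optimal $(b_u,b_l)$ strategy yield strategies $\pioo,\piot$ whose value functions both equal $v_\kappa$, hence coincide. I would then read the two barrier levels off of this common function.

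First I would examine the common lower branch on the interval $[0,\min\{b_u^{(1)},b_u^{(2)}\}]$, which is nondegenerate since $b_u^{(i)}>b_l^{(i)}+\kappa\geq\kappa>0$. There, by \eqref{eqt.value.snlp.lower}, each of the two value functions is a constant multiple of $\Wq$, and as $\Wq>0$ on $(0,\infty)$ these two constants must be equal --- call the common value $c$ --- so that $v_\kappa'=c\,\Wq'$ on that interval. Next I would pin down $b_l$. If $\Voo\leq 1$, the definition of an optimal $(b_u,b_l)$ strategy forces $b_l^{(1)}=b_l^{(2)}=0$ and there is nothing further to prove for the lower barrier. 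If $\Voo>1$, then by that definition together with Lemma~\ref{lemma.bl.exist2} we have $b_l^{(i)}\in(0,b^*)$ and $v_\kappa'(b_l^{(i)})=1$ (cf. \eqref{der.vbl}), whence $\Wq'(b_l^{(1)})=1/c=\Wq'(b_l^{(2)})$. Since $b_l^{(1)},b_l^{(2)}$ lie in $(0,b^*)\subset(0,\bar{b})$, where $\Wq''<0$ (the second item of Section~\ref{section.prelim.snlp}), $\Wq'$ is strictly decreasing and hence injective there, which gives $b_l^{(1)}=b_l^{(2)}$. Writing $b_l^*$ for this common value, Lemma~\ref{lemma.bu.exist} then supplies a unique $b_u>b_l^*+\kappa$ satisfying $\Gamma_{b_l^*}(b_u-b_l^*)=0$; as both $\pioo$ and $\piot$ are smooth $(b_u,b_l)$ strategies, this forces $b_u^{(1)}=b_u^{(2)}=:b_u^*$, so the qualifying pair is unique. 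That $\pi_{b_u^*,b_l^*}$ is optimal is then exactly the preceding theorem.

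The step I expect to be the main obstacle is restricting attention to the strictly monotone part of $\Wq'$: globally $\Wq'$ is not injective (it decreases on $[0,\bar{b}]$ and increases afterwards), so the implication $\Wq'(b_l^{(1)})=\Wq'(b_l^{(2)})\Rightarrow b_l^{(1)}=b_l^{(2)}$ is only legitimate because \eqref{ineq.bs} guarantees any qualifying lower barrier satisfies $b_l<b^*<\bar{b}$. A secondary, minor, point to keep straight is that the alternative $\Voo\leq 1$ versus $\Voo>1$ is a condition on a fixed number not depending on the candidate pair, so both candidates necessarily fall into the same case --- the two cases above are therefore exhaustive and cannot be mixed.
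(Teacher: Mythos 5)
The paper does not present an in-text proof of this corollary; it simply cites Proposition~11.3 of \citet*{AvLaWo20d}, so there is no written argument to compare against line by line. Your self-contained proof is correct, and the key ingredients are all appropriately sourced: (i) the preceding theorem shows that any two qualifying pairs produce strategies whose value functions both equal the single function $v_\kappa$; (ii) the lower-branch representation \eqref{eqt.value.snlp.lower} then fixes a common multiplicative constant in front of $\Wq$; (iii) in the case $\Voo>1$, the proof (not just the statement) of Lemma~\ref{lemma.bl.exist2} establishes $V_\kappa'(b_l;\pis)<1$ for all $b_l\geq b^*$, which is what lets you conclude that \emph{every} qualifying $b_l$ lies in $(0,b^*)$, and $b^*<\bar b$ is explicitly recorded in the proof of Lemma~\ref{lemma.bu.exist}; since $\Wq''<0$ there, $\Wq'$ is injective and $b_l^*$ is pinned down; and (iv) the uniqueness claim of Lemma~\ref{lemma.bu.exist} then fixes $b_u^*$. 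Your explicit warning that $\Wq'$ is not globally injective and that the restriction to $(0,b^*)$ is essential addresses the main pitfall. The one point worth making more explicit is that the localisation of all candidate $b_l$ in $(0,b^*)$ comes from the displayed inequality inside the proof of Lemma~\ref{lemma.bl.exist2}, not merely from the lemma's bare existence statement; as written you cite the lemma together with \eqref{der.vbl}, which does carry that information, so this is a clarity suggestion rather than a gap. Overall this is a valid and more transparent alternative to the paper's reliance on an external reference.
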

The proof follows from Proposition 11.3 in \citet*{AvLaWo20d}. Therefore, we can conclude that there are (unique) $(b_u^*,b_l^*)$ such that $\pi_{b_u^*,b_l^*}$ is optimal.

\section{\corr{Convergence result when $\kappa\downarrow 0$}}
\label{S.conv.kappa}

\corr{In this section, we show in Lemmas \ref{lemma.conv.kappa.bgeq0} and \ref{lemma.conv.kappa.beq0} that when $\kappa\downarrow 0$, $b_u^*-b_l^*\rightarrow 0$ so that combining with \eqref{ineq.bs}, it can be concluded that $b_u^*\rightarrow b^*$ and $b_l^*\rightarrow b^*$. In particular, convergence in barriers will imply convergence in both strategy $\pi$ (convergence of $D^\pi$ with probability one) and value function $V$. Note it can be shown that the convergence is monotonic (i.e. $b_u^*-b_l^*\downarrow 0$) with some additional steps. Since the latter implications can be obtained easily once the convergence of barriers is shown, we will only show the convergence of the barriers $b^*_u$ and $b_l^*$. For a numerical illustration, we refer to Figure \ref{fig.sen.kappa}.}

\corr{In order to show our main result, we need to first show the following lemma, which should hold intuitively.}

\corr{\begin{lemma}\label{lemma.vkappa.converge}
    We have $\lim_{\kappa\downarrow 0}v_\kappa(x)=v_0(x)$ for all $x\geq 0$.
\end{lemma}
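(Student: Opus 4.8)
The plan is to establish the two inequalities $\limsup_{\kappa\downarrow 0} v_\kappa(x) \le v_0(x)$ and $\liminf_{\kappa\downarrow 0} v_\kappa(x) \ge v_0(x)$ separately, using monotonicity on one side and an explicit approximating-strategy argument on the other. From property 3 in Remark \ref{remark.different.kappa.on.v}, the map $\kappa \mapsto v_\kappa(x)$ is nonincreasing, so $v_\kappa(x) \ge v_0(x)$ would be false in general; rather we have $v_\kappa(x) \le v_0(x)$ for all $\kappa \ge 0$, which immediately gives $\limsup_{\kappa\downarrow 0} v_\kappa(x) \le v_0(x)$. Moreover monotonicity guarantees that $\lim_{\kappa\downarrow 0} v_\kappa(x)$ exists (it is a bounded monotone net, bounded above by $v_0(x)$ and below by, say, $V_\kappa(x;\pi_{b^*})$ for $\kappa$ small), so it remains only to show that this limit is at least $v_0(x)$.

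For the reverse inequality, fix $x\ge 0$ and $\varepsilon>0$. The idea is to take a near-optimal strategy for the $\kappa=0$ problem and perturb it into an admissible strategy for the $\kappa>0$ problem whose value is close. By the preliminary results in Section \ref{section.prelim.snlp}, the periodic barrier strategy $\pi_{b^*}$ is optimal for $\kappa=0$, so $V_0(x;\pi_{b^*}) = v_0(x)$. For $\kappa>0$ small, consider the smooth periodic $(b_u,b_l)$ strategy $\pi_{b_u,b_l}^{\kappa,s}$ (or the optimal one $\pio$); since by Lemma \ref{lemma.bu.exist} the gap $b_u - b_l$ is continuous in the parameters and $\Gamma_{b_l}(\kappa)<0$ with $\Gamma_{b_l}$ having a unique root, one checks that as $\kappa\downarrow 0$ the root $g=b_u-b_l$ tends to $0$, hence $b_u,b_l \to b^*$. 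Then by Lemma \ref{lemma.value.snlp} the value function $V_\kappa(x;\pio)$ is an explicit expression in scale functions evaluated at $b_u,b_l$ and in $\kappa$; letting $\kappa\downarrow 0$ and using continuity of $W_\delta, Z_{\gamma,\delta}, \overline{W}_{\gamma+\delta}$ and their integrals, this expression converges to the expression \eqref{eqt.reduce.to.barrier} evaluated at $b^*$, i.e. to $V_0(x;\pi_{b^*}) = v_0(x)$. Since $v_\kappa(x) \ge V_\kappa(x;\pio)$, this yields $\liminf_{\kappa\downarrow 0} v_\kappa(x) \ge v_0(x)$, completing the proof.

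There is a mild circularity to avoid: the claimed convergence $b_u^*, b_l^* \to b^*$ is stated in the surrounding text as something to be proved \emph{using} this lemma (Lemmas \ref{lemma.conv.kappa.bgeq0} and \ref{lemma.conv.kappa.beq0}), so for this lemma I would not invoke full convergence of the optimal barriers. Instead, I would only need a one-sided statement: exhibit \emph{some} admissible family of strategies $\pi^\kappa \in \Pi_\kappa$ with $\liminf_{\kappa\downarrow 0} V_\kappa(x;\pi^\kappa) \ge v_0(x)$. A clean choice is $\pi^\kappa := \pi_{b^*+\kappa,\, b^*}$, the periodic $(b^*+\kappa, b^*)$ strategy, which lies in $\Pi_\kappa$ since $b_u - b_l = \kappa \ge \kappa$; its value function is given explicitly by Lemma \ref{lemma.value.snlp} with $g=\kappa$, and letting $\kappa\downarrow 0$ every scale-function term converges to its value at $b^*$ while the factor $(\tfrac{1}{\phi_{\gamma+\delta}} + g - \kappa) = \tfrac{1}{\phi_{\gamma+\delta}}$ is in fact constant, so $V_\kappa(x;\pi_{b^*+\kappa,b^*}) \to V_0(x;\pi_{b^*}) = v_0(x)$ by inspection of the formula. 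This sidesteps any need for the smoothness-root analysis.

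The main obstacle is purely the bookkeeping in the second step: one must verify that the explicit formula in Lemma \ref{lemma.value.snlp}, with $b_u = b^*+\kappa$, $b_l = b^*$, $g=\kappa$, genuinely has a finite limit equal to \eqref{eqt.reduce.to.barrier} at $b^*$ — in particular that the denominator $\phi_{\gamma+\delta} Z_{\gamma,\delta}(b_u) - \gamma W_\delta(b_l)$ converges to $\phi_{\gamma+\delta} Z_{\gamma,\delta}(b^*) - \gamma W_\delta(b^*) = Z_{\gamma,\delta}'(b^*) / \phi_{\gamma+\delta}\cdot\phi_{\gamma+\delta}$ (using the identity $Z_{\gamma,\delta}'(b) = \phi_{\gamma+\delta}Z_{\gamma,\delta}(b) - \gamma W_\delta(b)$) which is strictly positive since $b^* < \bar b$ would need to be handled, or more simply since $Z_{\gamma,\delta}' > 0$ always. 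Continuity of $W_\delta \in \mathscr{C}^\infty$ (first preliminary item) and of its iterated integrals makes each term converge; the only real point is the non-vanishing of the denominator in the limit, which follows from $Z_{\gamma,\delta}'(b^*) > 0$. Everything else is routine.
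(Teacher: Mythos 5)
Your proof is correct, but it takes a genuinely different and considerably more elaborate route than the paper. For the lower bound you construct the explicit admissible family $\pi_{b^*+\kappa,\,b^*}\in\Pi_\kappa$ and use the scale-function formula of Lemma \ref{lemma.value.snlp} (with $g=\kappa$, so $g-\kappa=0$) to show $V_\kappa(x;\pi_{b^*+\kappa,b^*})\to V_0(x;\pi_{b^*})=v_0(x)$ as $\kappa\downarrow 0$; this works, and you correctly identify that the only delicate point is the non-vanishing denominator, which indeed tends to $\Zqr'(b^*)>0$ via the identity $\Zqr'(b)=\phiqr\Zqr(b)-\gamma\Wq(b)$. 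However, this route requires the explicit value-function formula, the preliminary optimality result $v_0=V_0(\cdot;\pi_{b^*})$, and hence the completely-monotone density assumption. The paper's argument is far shorter and more elementary: for \emph{any} $\pi\in\Pi$, comparing the definitions gives $V_0(x;\pi)-V_\kappa(x;\pi)\le\kappa\,\mathbb{E}\sum_{i\ge 1}e^{-\delta T_i}=\kappa\gamma/\delta$, a bound uniform over strategies; taking suprema yields $v_0(x)\le v_\kappa(x)+\kappa\gamma/\delta$, which together with the monotonicity bound $v_\kappa(x)\le v_0(x)$ sandwiches $v_\kappa(x)$ and immediately gives the limit. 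That argument needs nothing beyond the definition of the value function. Your instinct to avoid circularity with Lemmas \ref{lemma.conv.kappa.bgeq0} and \ref{lemma.conv.kappa.beq0} was right, but the uniform bound on the transaction-cost contribution sidesteps all machinery and is the cleaner tool.
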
}
\corr{\begin{proof}
    By the definition in equation \eqref{def.value.fcn.kappa}, we see that
    $$V_0(x;\pi)=V_\kappa(x;\pi)+\kappa\mathbb{E}\sum_{i\in\mathbb{N}}e^{-\delta T_i}1_{\{T_i\leq \tau^\pi\}}$$
    for $\pi\in\Pi$. Therefore, by taking supremum, we have
    $$
        v_0(x)\leq \sup_{\pi\in\Pi}V_\kappa(x;\pi)+\kappa \mathbb{E}\sum_{i\in\mathbb{N}}e^{-\delta T_i}=v_\kappa(x)+\kappa \mathbb{E}\sum_{i\in\mathbb{N}}e^{-\delta T_i}\leq v_0(x)+\kappa \mathbb{E}\sum_{i\in\mathbb{N}}e^{-\delta T_i},
    $$
    where the term with $\kappa$ is positive and finite. Therefore, by taking $\kappa\downarrow 0$, it is readily deduced that
    $\lim_{\kappa\downarrow 0}v_\kappa(x)=v_0(x)$ for all $x\geq 0$.
\end{proof}}

\corr{We are now ready to show our main result when $b^*>0$ in the following lemma.}
\corr{\begin{lemma}\label{lemma.conv.kappa.bgeq0}
    Suppose $b^*>0$, we have that when $\kappa\downarrow 0$, $b_u^*-b_l^*\rightarrow 0$.
\end{lemma}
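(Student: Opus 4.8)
The plan is to argue by contradiction, exploiting the monotonicity in $\kappa$ of the family $(b_u^*(\kappa), b_l^*(\kappa))$ together with the convergence $v_\kappa \to v_0$ from Lemma \ref{lemma.vkappa.converge}. Suppose $b^* > 0$ but $b_u^*(\kappa) - b_l^*(\kappa) \not\to 0$ as $\kappa \downarrow 0$; then there is a sequence $\kappa_n \downarrow 0$ and an $\varepsilon > 0$ with $b_u^*(\kappa_n) - b_l^*(\kappa_n) \geq \varepsilon$ for all $n$. By \eqref{ineq.bs} we always have $b_l^*(\kappa) < b^* < b_u^*(\kappa)$, so both barrier families stay in a bounded region (bounded above by the fact that, since $v_{\kappa}(x) \le v_0(x) < \infty$ and $v_0$ grows linearly with slope $\le 1$ eventually, $b_u^*(\kappa)$ cannot escape to $\infty$ — or one can extract this from Remark \ref{remark.bu.geq.bstar} and the behaviour of $h$). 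Passing to a further subsequence, $b_l^*(\kappa_n) \to \beta_l \le b^* $ and $b_u^*(\kappa_n) \to \beta_u \ge b^*$ with $\beta_u - \beta_l \ge \varepsilon$, so in particular $\beta_l < \beta_u$ and at least one of the strict inequalities $\beta_l < b^*$, $b^* < \beta_u$ holds.

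Next I would pass to the limit in the two defining relations of $\pio$. First, the smoothness condition $\Gamma_{b_l^*(\kappa_n)}(b_u^*(\kappa_n) - b_l^*(\kappa_n)) = 0$ from \eqref{eqt.Gamma}: since $\Gamma$, as a two-parameter continuous function of $(b_u, b_l, \kappa)$, is jointly continuous and $\kappa_n \to 0$, the limit satisfies $\Gamma^{(0)}_{\beta_l}(\beta_u - \beta_l) = 0$, i.e.\ $(\beta_u - \beta_l)\Zqr'(\beta_u) = \frac{\gamma}{\phiqr}(\Wq(\beta_u) - \Wq(\beta_l))$ — this is precisely the $\kappa = 0$ smoothness condition, which forces $\beta_u - \beta_l = 0$ unless $\beta_u = \beta_l$. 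Wait — at $\kappa = 0$ the unique root of $\Gamma^{(0)}_{b_l}$ is $g = 0$ (one checks $\Gamma^{(0)}_{b_l}(0) = 0$ and uniqueness follows exactly as in Lemma \ref{lemma.bu.exist}), so $\beta_u = \beta_l$, contradicting $\beta_u - \beta_l \ge \varepsilon$. That already yields the contradiction once I know the limiting pair really does solve the $\kappa=0$ smoothness equation; but I should be careful, because in the regime $\Voo > 1$ the strategy $\pio$ is pinned down not by $\Voo \le 1$ but by the extra condition $V_\kappa'(b_l; \pis) = 1$, and I must make sure this additional constraint also passes to the limit consistently (it will, again by continuity of $V_\kappa'(\cdot;\pis)$ in $(b_l, \kappa)$ via Lemma \ref{lemma.bu.exist} and \eqref{eqt.value.snlp.lower}).

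So the cleaner route is: show directly that $\beta_u - \beta_l = 0$ is forced, hence $\beta_u = \beta_l = b^*$ by \eqref{ineq.bs}, contradicting $\beta_u - \beta_l \ge \varepsilon > 0$. Concretely, I would (i) establish the a priori bound $\limsup_{\kappa \downarrow 0} b_u^*(\kappa) < \infty$, (ii) extract the convergent subsequence and its limits $\beta_l \le b^* \le \beta_u$, (iii) take $\kappa_n \to 0$ in $\Gamma_{b_l^*(\kappa_n)}(g_n) = 0$ using joint continuity to get that $(\beta_l, \beta_u)$ solves the $\kappa = 0$ smoothness condition, and (iv) invoke the uniqueness of the root of $\Gamma^{(0)}_{b_l}$ (the $\kappa=0$ instance of Lemma \ref{lemma.bu.exist}, whose proof goes through verbatim) which is $g = 0$, to conclude $\beta_u = \beta_l$. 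I expect the main obstacle to be step (i), the a priori upper bound on $b_u^*(\kappa)$ as $\kappa \downarrow 0$: one needs to rule out the barriers drifting off to infinity, which should follow by combining Lemma \ref{lemma.vkappa.converge} (so the value functions stay pinned near $v_0$, which has derivative $< 1$ beyond $b^*$) with the structural identity \eqref{eqt.reduce.to.barrier} expressing $V_\kappa(\cdot;\pio)$ as a $\kappa=0$ barrier value function at level $b_u$ — a barrier level far above $b^*$ would give a value function strictly dominated by $v_0 = V_0(\cdot;\pi_{b^*})$ by an amount bounded away from $0$, which is incompatible with $V_\kappa(x;\pio) = v_\kappa(x) \to v_0(x)$.
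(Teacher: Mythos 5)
Your argument has a genuine gap at its key step. You claim that ``at $\kappa=0$ the unique root of $\Gamma^{(0)}_{b_l}$ is $g=0$, and uniqueness follows exactly as in Lemma \ref{lemma.bu.exist}.'' This is false when $b_l<b^*$. From \eqref{equation.dGamma}, at $\kappa=0$ and $g=0$ one has
\[
\frac{\partial}{\partial g}\Gamma^{(0)}_{b_l}(g)\Big|_{g=0}=\frac{1}{\phiqr}\,\Zqr''(b_l)<0\qquad\text{for }b_l\in(0,b^*),
\]
so $g=0$ is a downward-crossing root, while $\Gamma^{(0)}_{b_l}(g)\to+\infty$ as $g\to\infty$ by the same estimate as in Lemma \ref{lemma.bu.exist}. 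Hence there is a second, upward-crossing root $g_0(b_l)>0$. The uniqueness argument in Lemma \ref{lemma.bu.exist} only shows (also at $\kappa=0$) that every \emph{positive} root is upward-crossing; it does not exclude the existence of such a root. Consequently, passing the smoothness condition to the limit only yields $\Gamma^{(0)}_{\beta_l}(\beta_u-\beta_l)=0$, which is perfectly compatible with $\beta_u-\beta_l=g_0(\beta_l)\geq\varepsilon>0$, and gives no contradiction. You sense this yourself when you observe that the extra condition $V_\kappa'(b_l^*;\pio)=1$ ``must also pass to the limit consistently,'' but you then set it aside; it is precisely the missing ingredient, and ``continuity'' alone does not close the gap.

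The paper's proof avoids the smoothness equation entirely. After extracting a subsequence with $b_l^n$ bounded away from $0$, it uses \eqref{eqt.value.snlp.lower} together with \eqref{eqt.reduce.to.barrier} to write $v_{\kappa_n}(x)=\frac{\gamma}{\phiqr\Zqr'(b_u^n)}\Wq(x)$ and $v_0(x)=\frac{\gamma}{\phiqr\Zqr'(b^*)}\Wq(x)$ on a fixed interval near $0$; the uniform convergence $v_{\kappa_n}\to v_0$ of Lemma \ref{lemma.vkappa.converge} then forces $\Zqr'(b_u^n)\to\Zqr'(b^*)$, hence $b_u^n\to b^*$ (as $\Zqr'$ is strictly increasing on $(b^*,\infty)$ and $b_u^n>b^*$ by Remark \ref{remark.bu.geq.bstar}). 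With $b_u^n-b_l^n>\varepsilon$ this pins $b_l^n$ strictly below $b^*-\varepsilon_1$, and comparing $\int_{b^*-\varepsilon_1}^{b^*}v_{\kappa_n}'<\varepsilon_1$ (Corollary \ref{Corr.vD.leq1}) with $\int_{b^*-\varepsilon_1}^{b^*}v_0'>\varepsilon_1$ (since $v_0'>1$ on $[0,b^*)$) yields a contradiction with $v_{\kappa_n}\to v_0$; the case $b_l^n\downarrow 0$ is handled by an adjusted choice of $\varepsilon_1$. Note that your step (i) already contains the essential observation (the structural identity \eqref{eqt.reduce.to.barrier} plus $v_\kappa\to v_0$), but you only extract boundedness from it. Exploited fully, it gives $b_u^n\to b^*$; had you then evaluated $\Gamma^{(0)}_{\beta_l}(b^*-\beta_l)$ using $\phiqr\Zqr'(b^*)=\gamma\Wq'(b^*)$ and the strict decrease of $\Wq'$ on $(0,\bar{b})$, you would find it strictly negative for $\beta_l<b^*$, which is a valid alternative contradiction. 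As written, however, step (iv) does not go through.
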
}
\begin{proof}
    \corr{By the hypothesis that $b^*>0$ and Lemma \ref{lemma.vkappa.converge}, we have that 
    \begin{enumerate}
        \item $\Wq'$ is decreasing on $(0,\bar{b})$ which contains $(0,b^*)$;
        \item $d_{[0,b^*]}(v_\kappa,v_0)\rightarrow 0$ when $\kappa\downarrow 0$, where $d_{K}(\cdot,\cdot)$ is the distance induced by the uniform norm for continuous functions on a compact set $K$.
    \end{enumerate}}
    
    \corr{Now, suppose the statement in the lemma is false, then there is a $\varepsilon>0$ and a decreasing sequence $\{\kappa_n;n\in\mathbb{N}\}$ such that $\kappa_n\downarrow 0$ and $b_u^*(\kappa_n)-b_l^*(\kappa_n)>\varepsilon$ for all $n\in\mathbb{N}$. For convenience, we denote the barriers $(b_u^n,b_l^n)$.}
    
   \corr{ Let's first consider the case there are infinite many $n$ such that $b_l^n\geq \bar{\delta}>0$ so by taking a subsequence if necessary, we can work with $b_l^n\geq \bar{\delta}$. Since $$v_0(x)=V_0(x;\pi_{b^*})=\frac{\gamma}{\phiqr\Zqr'(b^*)}\Wq(x),\quad v_{\kappa_n}(x)=V_0(x;\pi_{b_u^n})=\frac{\gamma}{\phiqr\Zqr'(b_u^n)}\Wq(x)$$
    for $x\leq \bar{\delta}$, we see that $b_u^n\rightarrow b^*$ by the second item at the beginning of the proof. In particular, we can assume that $b_u^*=b^*+\delta_n$, where $\delta_n\downarrow 0$ (by passing a subsequence if necessary). This means we can assume that $b^*-b_l^n>\varepsilon_1$ (with $\varepsilon_1\in(0,\varepsilon)$) without loss of generality. By the second point above, for large enough $n$, we have
    $$v_{\kappa_n}(b^*)-v_0(b^*)=\varepsilon_2,\quad v_{\kappa_n}(b^*-\varepsilon_1)-v_0(b^*-\varepsilon_1)=\varepsilon_3$$
    for some $\varepsilon_2,\varepsilon_3>0$ arbitrarily small absolute values. This implies 
    $$v_{\kappa_n}(b^*)-v_{\kappa_n}(b^*-\varepsilon_1)=v_0(b^*)-v_0(b^*-\varepsilon_1)+\varepsilon_2-\varepsilon_3,$$
    or
    $$\int_{b^*-\varepsilon_1}^{b^*}v_{\kappa_n}'(y)dy=\int_{b^*-\varepsilon_1}^{b^*}v_{0}'(y)dy+\varepsilon_2-\varepsilon_3.$$
    Note from Corollary \ref{Corr.vD.leq1}, the integral on the left is strictly less than $\varepsilon_1$, while the integral on the right is strictly larger than $\varepsilon_1$ (see Section \ref{section.prelim.snlp}). Since $\varepsilon_2-\varepsilon_3$ can be made arbitrarily small (in absolution value) by increasing $n$, we arrive at a contradiction.}
    
    \corr{For the case where $b_l^n\downarrow 0$, the above argument applies if we choose $\varepsilon_1=(b^*/2)\vee (b^*-\varepsilon)$.} 

\end{proof}

\corr{The next lemma deals with the case $b^*=0$.}

\corr{\begin{lemma}\label{lemma.conv.kappa.beq0}
    Suppose $b^*=0$, we have that when $\kappa\downarrow 0$, $b_u^*-b^*_l\rightarrow 0$.
\end{lemma}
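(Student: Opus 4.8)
**Proof proposal for Lemma \ref{lemma.conv.kappa.beq0} (the case $b^*=0$).**

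The plan is to mimic the structure of the proof of Lemma \ref{lemma.conv.kappa.bgeq0}, but to exploit the extra information that $b^*=0$. Recall from Remark \ref{remark.bu.geq.bstar} and \eqref{ineq.bs} that, for any $\kappa>0$, we always have $b_l^*(\kappa)<b^*<b_u^*(\kappa)$; since $b^*=0$, this forces $b_l^*(\kappa)=0$ and hence the relevant gap is simply $b_u^*-b_l^*=b_u^*$. So it suffices to show $b_u^*(\kappa)\to 0$ as $\kappa\downarrow 0$. Moreover, by item 5 in Section \ref{section.prelim.snlp}, $b^*=0$ is equivalent to $h(0)\geq 0$, and by \eqref{result1.noba} this means $h(x)\geq 0$ for all $x\geq 0$; equivalently $\Zqr''(x)\geq 0$ for all $x\geq 0$, so $\Wq'$ is nondecreasing on $[0,\infty)$ (i.e. $\bar b=0$ as well, using item 2). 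This monotonicity of $\Wq'$ on all of $[0,\infty)$ is the structural feature that replaces the ``$\Wq'$ decreasing on $(0,\bar b)$'' fact used in the previous lemma.

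First I would record that, because $b_l^*=0$, the lower branch \eqref{eqt.value.snlp.lower} gives, for $x\le b_u^*$,
\[
v_\kappa(x)=V_\kappa(x;\pi_{b_u^*,0}^{\kappa,*})=\frac{\gamma}{\phiqr\Zqr'(b_u^*)}\Wq(x),
\]
while $v_0(x)=V_0(x;\pi_{b^*})=\gamma\Wq(x)/(\phiqr\Zqr'(0))$ for $x\le b^*=0$ — which alone is not enough — so instead I would compare the two value functions on a fixed small compact interval $[0,\rho]$ using Lemma \ref{lemma.vkappa.converge}: $v_\kappa\to v_0$ uniformly on $[0,\rho]$. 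Suppose for contradiction that $b_u^*(\kappa)\not\to 0$; then there is $\varepsilon>0$ and a sequence $\kappa_n\downarrow 0$ with $b_u^n:=b_u^*(\kappa_n)>\varepsilon$ for all $n$. Pick $\rho\in(0,\varepsilon)$ (so that $[0,\rho]\subset[0,b_u^n]$ for every $n$, and the formula above for $v_{\kappa_n}$ is valid on all of $[0,\rho]$). Then on $[0,\rho]$, $v_{\kappa_n}$ is a constant multiple of $\Wq$, hence $v_{\kappa_n}'$ is a constant multiple of $\Wq'$, which — since $b^*=0$ — is \emph{nondecreasing} on $[0,\rho]$. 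In particular $v_{\kappa_n}'(x)\le v_{\kappa_n}'(\rho)$ for $x\in[0,\rho]$, and by Lemma \ref{lemma.vblPbu.leq1} (applied at $b_u^n$, or by the same argument as in \eqref{der.vbl}) we have $v_{\kappa_n}'(b_u^n)<1$; combined with the monotonicity of $\Wq'$ on $[0,b_u^n]$ this yields $v_{\kappa_n}'(x)< 1$ for all $x\in[0,\rho]$, so $v_{\kappa_n}(\rho)-v_{\kappa_n}(0)=\int_0^\rho v_{\kappa_n}'(y)\,dy<\rho$.

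On the other hand, for $v_0$ one has $v_0'(0)\ge 1$ when $b^*=0$ (recall, in the $\kappa=0$ problem with $b^*=0$, liquidation at first opportunity is optimal and the value function satisfies $v_0'(x)\ge 1$ near $0$; more precisely $v_0'(0^+)\geq 1$ and $v_0'$ does not drop below $1$ on $[0,b^*]=\{0\}$, and just above $0$ one has $v_0'\geq 1$ by the verification inequality at the boundary). Actually the cleanest route is: since $\Wq'$ is nondecreasing, $v_0' $ on any interval $[0,\rho]$ with $\rho$ small is bounded below by $v_0'(0^+)$, and $v_0'(0^+)\ge 1$ (otherwise a strictly positive barrier would improve on $b^*=0$, contradicting item 5). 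Hence $v_0(\rho)-v_0(0)=\int_0^\rho v_0'(y)\,dy\ge \rho$. But uniform convergence $v_{\kappa_n}\to v_0$ on $[0,\rho]$ gives $v_{\kappa_n}(\rho)-v_{\kappa_n}(0)\to v_0(\rho)-v_0(0)\ge \rho$, contradicting $v_{\kappa_n}(\rho)-v_{\kappa_n}(0)<\rho$ for all $n$. This contradiction establishes $b_u^*(\kappa)\to 0$, hence $b_u^*-b_l^*\to 0$.

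The step I expect to be the main obstacle is the lower bound $v_0'(0^+)\ge 1$ (equivalently, a quantitative ``non-smooth'' statement at $0$ in the $b^*=0$ regime): the paper states $v_0(x)=V_0(x;\pi_{b^*})$ and $V_0'(x;\pi_{b^*})<1$ for $x\ge b^*$ only when $b^*>0$, so when $b^*=0$ one must argue separately why the derivative at the origin is at least $1$. I would obtain this from the optimality of liquidation-at-first-opportunity when $b^*=0$: if $v_0'(0^+)<1$ held, then for small $x>0$ one would have $v_0(x)<v_0(0)+x$, which by the verification inequality (Lemma \ref{lemma.ver} with $\kappa=0$) evaluated against the ``pay everything down to a small barrier'' perturbation would contradict $v_0(x)=\sup_\pi V_0(x;\pi)$; alternatively, one reads it off directly from the explicit formula $v_0(x)=\gamma\Wq(x)/(\phiqr\Zqr'(0))$ valid for small $x$ together with the identity $\gamma\Wq'(0)=\phiqr\Zqr'(0)\,(\text{at }b^*=0)$ coming from $h(0)=0$, which gives $v_0'(0^+)=1$ exactly on the boundary case and $>1$ otherwise. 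Either way this is the one place where the ``$b^*=0$'' hypothesis must be used in an essential, non-cosmetic manner.
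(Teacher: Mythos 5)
Your argument contains a sign error that is fatal to the whole contradiction strategy. You claim that when $b^*=0$ one has $v_0'(0^+)\ge 1$ (``otherwise a strictly positive barrier would improve on $b^*=0$''). This is backwards. From item 5 of Section~\ref{section.prelim.snlp}, equation \eqref{ineq.vDbstar} reads $V_0'(x;\pi_{b^*})<1$ for $x\ge b^*$, so when $b^*=0$ we in fact have $v_0'(x)<1$ for \emph{all} $x\ge 0$, in particular $v_0'(0^+)\le 1$. (Explicitly, $v_0'(0^+)=\gamma \Wq'(0)/\bigl(\phiqr \Zqr'(0)\bigr)$, and $\Zqr''(0)=\phiqr \Zqr'(0)-\gamma \Wq'(0)\ge 0$ because $b^*=0\iff h(0)\ge 0\iff\Zqr''(0)\ge 0$; so the ratio is $\le 1$.) Your heuristic is also inverted: if the marginal value of retaining surplus near $0$ exceeded the marginal value $1$ of paying it out, then liquidating at $0$ would be \emph{sub}optimal and $b^*$ would be strictly positive. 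Consequently, both $v_{\kappa_n}(\rho)-v_{\kappa_n}(0)<\rho$ and $v_0(\rho)-v_0(0)<\rho$ hold, and no contradiction with uniform convergence is available; the mechanism that drives Lemma~\ref{lemma.conv.kappa.bgeq0} (namely $v_0'>1$ on a subinterval of $(0,b^*)$) has no analogue when $b^*=0$, which is precisely why the paper abandons that route here.

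A secondary, non-fatal inaccuracy: $b^*=0$ does \emph{not} imply that $\Wq'$ is nondecreasing on $[0,\infty)$ (i.e.\ that $\bar b=0$). The condition $b^*=0$ is $h(0)\ge 0$, i.e.\ $\Zqr''(0)=\gamma\int_0^\infty e^{-\phiqr y}\Wq''(y)\,dy\ge 0$, which can hold with $\Wq''(0)<0$ (so $\bar b>0$) because the sign of the integrand over a short initial interval can be outweighed by the tail. Your conclusion $v_{\kappa_n}'(x)<1$ on $[0,\rho]$ can still be salvaged via the ``maximum at an endpoint'' argument of Corollary~\ref{Corr.vD.leq1} (using $v_{\kappa_n}'(0)\le 1$ from the definition of $\pio$ and $v_{\kappa_n}'(b_u^n)<1$ from Lemma~\ref{lemma.vblPbu.leq1}), but the monotonicity statement as written is incorrect.

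For the record, the paper's actual proof takes an entirely different and more local route. It parameterises $b_u^*=\kappa+\varepsilon$ (with $b_l^*=0$) through the smoothness condition and defines
\[
\widetilde{\Gamma}_\kappa(\varepsilon)=\varepsilon\,\Zqr'(\kappa+\varepsilon)-\frac{\gamma}{\phiqr}\bigl(\Wq(\kappa+\varepsilon)-\Wq(0)\bigr),
\]
then lower-bounds $\widetilde{\Gamma}_\kappa(\varepsilon)\ge \frac{1}{\phiqr}\int_0^\varepsilon \Zqr''(\kappa+y)\,dy-\frac{\gamma}{\phiqr}\int_0^\kappa \Wq'(y)\,dy$ using $\Zqr''>0$ on $(0,\infty)$ (a consequence of $b^*=0$). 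Since $\Wq'(0+)<\infty$ when $b^*=0$, the subtracted term vanishes as $\kappa\downarrow 0$, and positivity of $\Zqr''$ forces the root $\varepsilon$ of $\widetilde{\Gamma}_\kappa(\varepsilon)=0$ to tend to $0$. This avoids any appeal to $v_0'$ near $0$ and, in particular, does not require (and would be incompatible with) the bound $v_0'(0^+)\ge 1$ you invoked.
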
}
\begin{proof}
    \corr{Note when $b^*=0$, we have that $b_l^*=0$ so essentially we need to show $b_u^*\rightarrow 0$. On the other hand, from $b^*=0$, we know that $\Wq'(0+)<\infty$. Similar to Lemma \ref{lemma.conv.kappa.bgeq0}, we work with a decreasing sequence of $\kappa$, say $\{\kappa_n,n\in\mathbb{N}\}$, with $\kappa_n\downarrow 0$.}
    
    \corr{Define the function
    \begin{equation}
        \widetilde{\Gamma}_\kappa(\varepsilon):=\varepsilon\Zqr'(\kappa+\varepsilon)-\frac{\gamma}{\phiqr}(\Wq(\kappa+\varepsilon)-\Wq(0)),
    \end{equation}
    which is the smoothness condition when $b_l=0$, see \eqref{eqt.Gamma}, when $b_u$ is decomposed to $b_l+\kappa+\varepsilon$. From this and that $\Zqr''>0$ on $(0,\infty)$ (because $b^*=0$), we have
    \begin{align*}
        \widetilde{\Gamma}_\kappa(\varepsilon)=~&
        \int_0^\varepsilon\Zqr'(\kappa+\varepsilon)dy - \frac{\gamma}{\phiqr}\int_0^\varepsilon \Wq'(\kappa+y)dy- \frac{\gamma}{\phiqr}\int_0^\kappa \Wq'(y)dy\\
        \geq~& \frac{1}{\phiqr}\int_0^\varepsilon \Zqr''(\kappa+y)dy- \frac{\gamma}{\phiqr}\int_0^\kappa \Wq'(y)dy.
    \end{align*}
    Since the second term goes to zero when $\kappa_n\downarrow 0$ and the term inside the first integral is strictly positive, we have $\varepsilon\rightarrow 0$ for $\widetilde{\Gamma}_\kappa(\varepsilon)=0$, when $\kappa_n\downarrow 0$. }

    \corr{This completes the proof.}
\end{proof}

\section{Numerical Illustrations}\label{section.numerical}

Our base model is a diffusion model with mixed exponential downward jumps, i.e.

$$X(t)=ct+\sigma W(t)-\sum_{k=1}^{N(t)}G_k,$$
where $W=\{W(t);t\geq 0\}$ is a standard Brownian motion, $N=\{N(t);t\geq 0\}$ is a Poisson process with rate $\lambda$, and where $G_k$ are sampled i.i.d. from a mixed exponential distribution, i.e.
$$\mathbb{P}(G_k\leq x)=p_1e^{-\beta_1 x}+p_2e^{-\beta_2 x},\quad x\geq 0,~\forall k$$
with $p_1+p_2=1$, $p_1,p_2,\beta_1,\beta_2>0$. 

In this case, the Laplace exponent (minus $q$) is given by
$$\psi(\theta)-q=c\theta+\frac{\sigma^2}{2}\theta^2+\lambda\Big(p_1\frac{\beta_1}{\beta_1+\theta}+p_2\frac{\beta_2}{\beta_2+\theta}\Big)-\lambda-q.$$
It is very easy to see that there are $4$ distinct real roots ($1$ positive and $3$ negative) for $\psi(\theta)-q=0$. Denote them $r_j,~j=1,2,3,4$ with order $r_i>r_j$ for $i<j$. Since the function $1/(\psi(\theta)-q)$ is a rational function, we can further express it (using partial fraction) as
$$\frac{1}{\psi(\theta)-q}=\sum_j \frac{A_j}{\theta-r_j}$$
with 
$$A_j=\frac{1}{\psi'(r_j)}.$$
Note $$\int_0^\infty e^{-\theta x}e^{r_j x}dx=\int_0^\infty e^{-(\theta-r_j)}dx=\frac{1}{\theta-r_j}$$
and from the uniqueness of Laplace transform, we can deduce that
$$W_q(x)=\sum_j A_j e^{r_j x}.$$
Therefore, all other scale functions can be computed explicitly easily.

To find the optimal barriers $(b_u^*,b_l^*)$, we make use of Lemmas \ref{lemma.bu.exist} and \ref{lemma.bl.exist2}. To be more specific, we perform the following:
\begin{enumerate}
	\item Find $b^*$ using (\ref{result1.noba}). Specifically, if $\Zqr^{\prime\prime}(0)\geq 0$, then set $b^*=0$, otherwise, solve $b^*$ such that $\Zqr^{\prime\prime}(b^*)= 0$. This can be done by (1) trying a large enough $b$ such that $\Zqr^{\prime\prime}(b)> 0$ following by (2) a bisection method on the range $[0,b]$.
	\item Write a function on $b_l\in[0,b^*]$ to output $b_u$ from Lemma \ref{lemma.bu.exist} with a similar method as the previous step (using range $[\max(\kappa,b^*),b]$ for large enough $b$), then calculate the derivative of the value function at $b_l$ and return this number. Say we call this function $G$.
	\item Find $b_l^*$ using Lemma \ref{lemma.bl.exist2}. Specifically, if $G(0)\leq 1$, then we set $b_l^*=0$, otherwise we can obtain $b_l^*$ by solving $G(b_l^*)=1$ via a bisection method on the range $[0,b^*]$. Use Lemma \ref{lemma.bu.exist} to calculate $b_u^*$ from $b_l^*$.
\end{enumerate}
\begin{remark}
	We remark that gradient descent type of methods typically do not work well here because a relatively large increment of the parameters (barriers) only results in a small change of the objective function (i.e. plateau). Therefore, analytic methods (those used in this paper) are needed. Perhaps more importantly, this shows that in practice one typically has more flexibility to deviate from the optimal strategy to incorporate other considerations.
\end{remark}

Our choice of parameters is $\lambda=10$, $p_1=0.9$, $\beta_1=1.9$, $\beta_2=0.19$, $c=11$, $\sigma=1$, $\gamma=1$, $\delta=0.2$ and $\kappa=0.2$. The Brownian motion term are used to model uncertainty (e.g. of the expenses), while the jump terms are used to model small and (ten times) larger claims, which occur at an average rate of 10\% of total claims.
Here, the value of $c$ is chosen such that the profit loading is $10\%$ and the expected profit per unit of time (net drift) of the process is $1$. In the following, we will use $\mu$ and $\varsigma^2$ to denote the expected value and the variance of the expected profit (increment over one unit of time of the surplus before dividends), respectively. That is,
\begin{align}
    \mu:=~&c-\lambda\Big(\frac{p_1}{\beta_1}+\frac{p_2}{\beta_2}\Big)=1,\\
    \varsigma^2:=~& \sigma^2+\frac{p_1}{\beta_1^2}+\frac{p_2}{\beta_2^2}.
\end{align}
In addition, we also use $M$ to denote the ratio between the expected value of the large claims and that of the small claims, i.e.
\begin{equation}
    M:=\frac{\beta_1}{\beta_2}=10.
\end{equation}

\subsection{The first derivative of the optimal value function beyond the upper barrier}
%\BA{I am not sure I understand what you are doing here. You claim you study $v'(b^*)$, so what are you varying if it is evaluated at $b^*$? Are you looking at its behaviour if you change $b^*$, or are you looking at $v'$ for set $b^*$? If the former, I am not sure I understand your argument. If the latter, you should use the notation that displays both variables. It would be good to plot $v'$ directly as well, and in the graphs of Figure 2 is the $x$ axis really $x$, or $b^*+x$?}\HL{$b^*$ is fixed, I plot $v''$ to see whether $v'$ is increasing beyond $b^*$. If it is positive (high cost, right graph), $v'$ is increasing; if it is negative (small cost, left graph, the usual case), $v'$ is decreasing. }

Generally speaking, we expect the optimal value function $v$ (see Equation \eqref{def.optimal.value.function}) to be concave because we expect the law of diminishing return holds. However, with the presence of fixed transaction costs, this is not necessarily the case. To see why it is possible to violate the concavity property, we shall consider the scenario when the surplus is high and the fixed transaction cost is also high. In such scenario, an incremental increase in surplus would actually decrease the ratio of the transaction costs to the first dividend payment (provided the company has not ruined yet), achieving a (relatively) higher return. Hence, the first order derivative $v'$ is increasing, as shown in Figure \ref{fig.vd.k2}. Remarkably, such a case also seems to hold even when the fixed transaction cost is low, see Figure \ref{fig.vd.k0.01}. We therefore conjecture that this would be the general case. We also plot the second order derivative for reference. The horizontal line in Figures \ref{fig.vd.k0.01} and \ref{fig.vd.k2} are the asymptotes $\gamma/(\gamma+\delta)$.

\begin{figure}[ht]
	\centering
	\begin{minipage}{0.3\textwidth}
		\centering
		\includegraphics[width=1\textwidth]{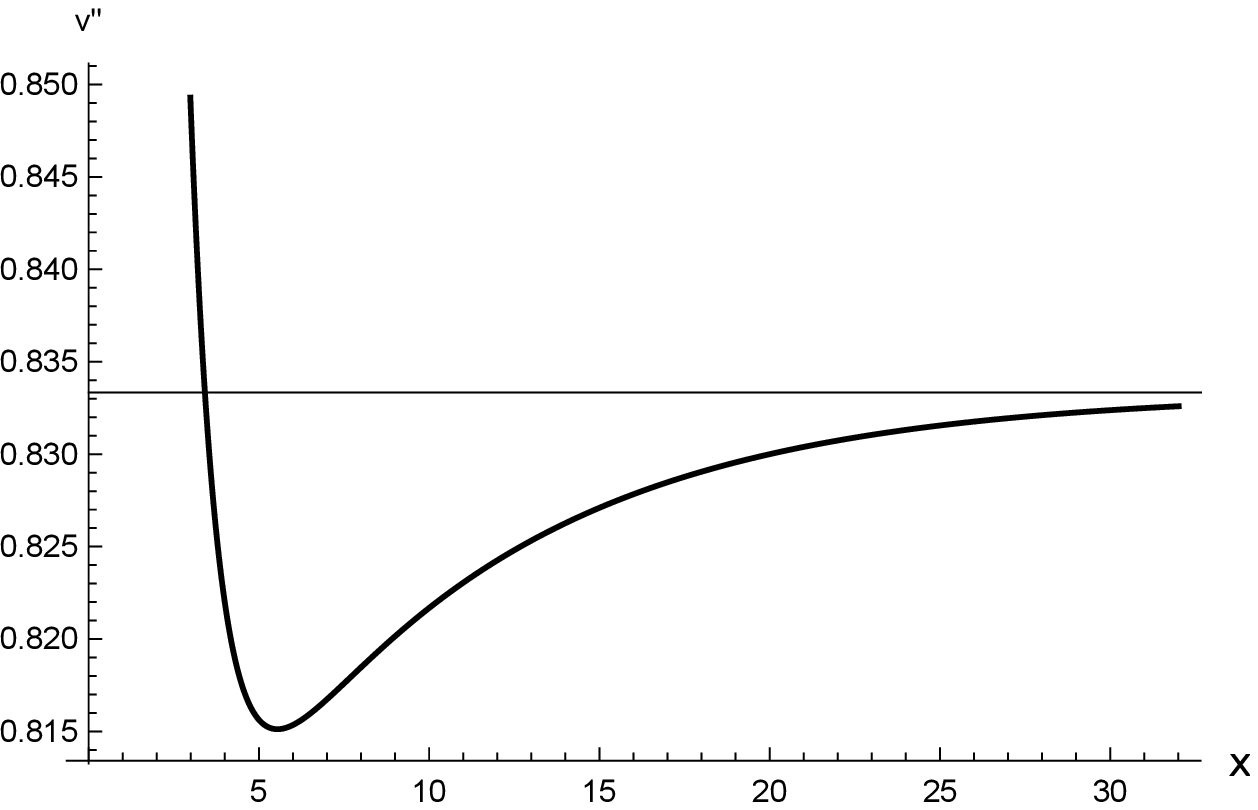}
		\subcaption[first caption.]{$v'(b_u^*+x)$ against $x$: $\kappa=0.01$}\label{fig.vd.k0.01}
	\end{minipage}%
	~~~\begin{minipage}{0.3\textwidth}
		\centering
		\includegraphics[width=1\textwidth]{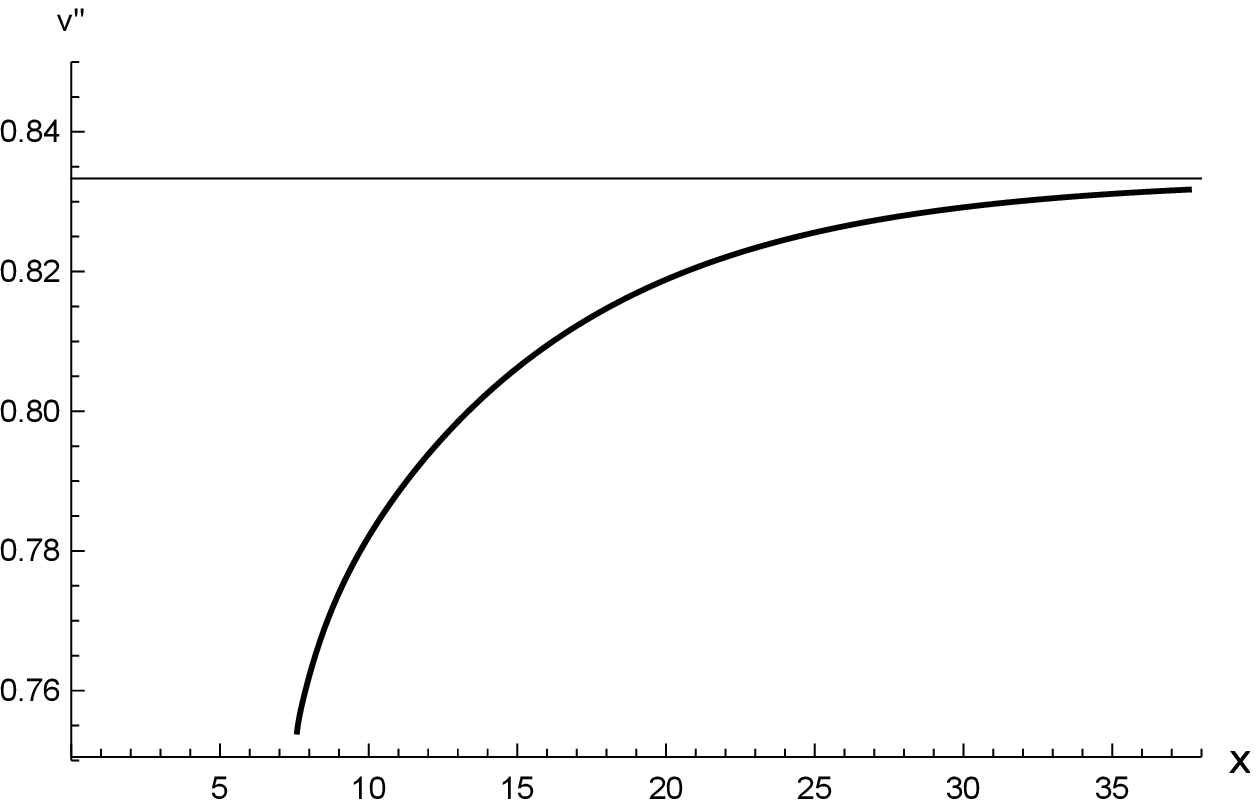}
		\subcaption[second caption.]{$v'(b_u^*+x)$ against $x$: $\kappa=2$}\label{fig.vd.k2}
	\end{minipage}%	
	\\\begin{minipage}{0.3\textwidth}
		\centering
		\includegraphics[width=1\textwidth]{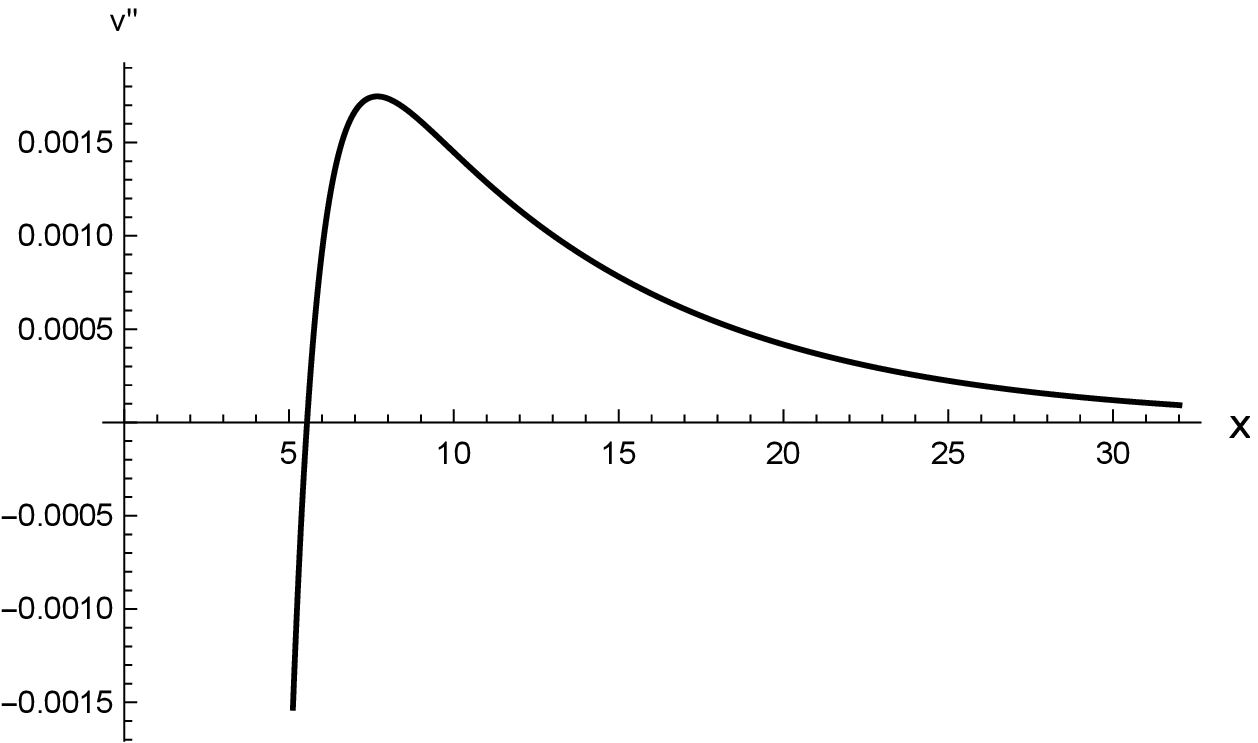}
		\subcaption[first caption.]{$v''(b_u^*+x)$ against $x$: $\kappa=0.01$}\label{fig.vdd.k0.01}
	\end{minipage}%
	~~~\begin{minipage}{0.3\textwidth}
		\centering
		\includegraphics[width=1\textwidth]{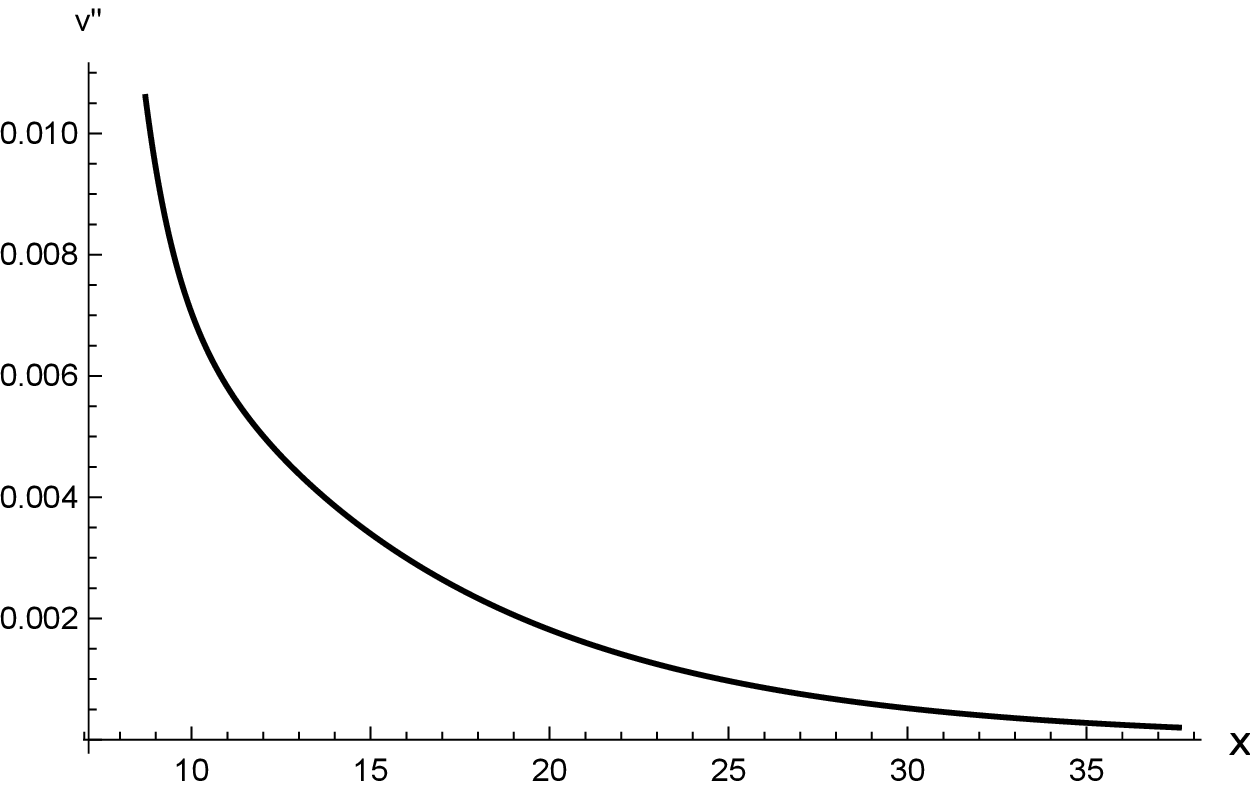}
		\subcaption[second caption.]{$v''(b_u^*+x)$ against $x$: $\kappa=2$}\label{fig.vdd.k2}
	\end{minipage}%	
	\caption{The derivatives $v'(b_u^*+x)$ (top row) and $v''(b_u^*+x)$ (bottom row) against $x$} \label{fig.vdd}
\end{figure}

\subsection{The impact of risk on the optimal barriers}

\begin{figure}[htb]
	\centering
	\begin{minipage}{0.3\textwidth}
	\centering
	\includegraphics[width=1\textwidth]{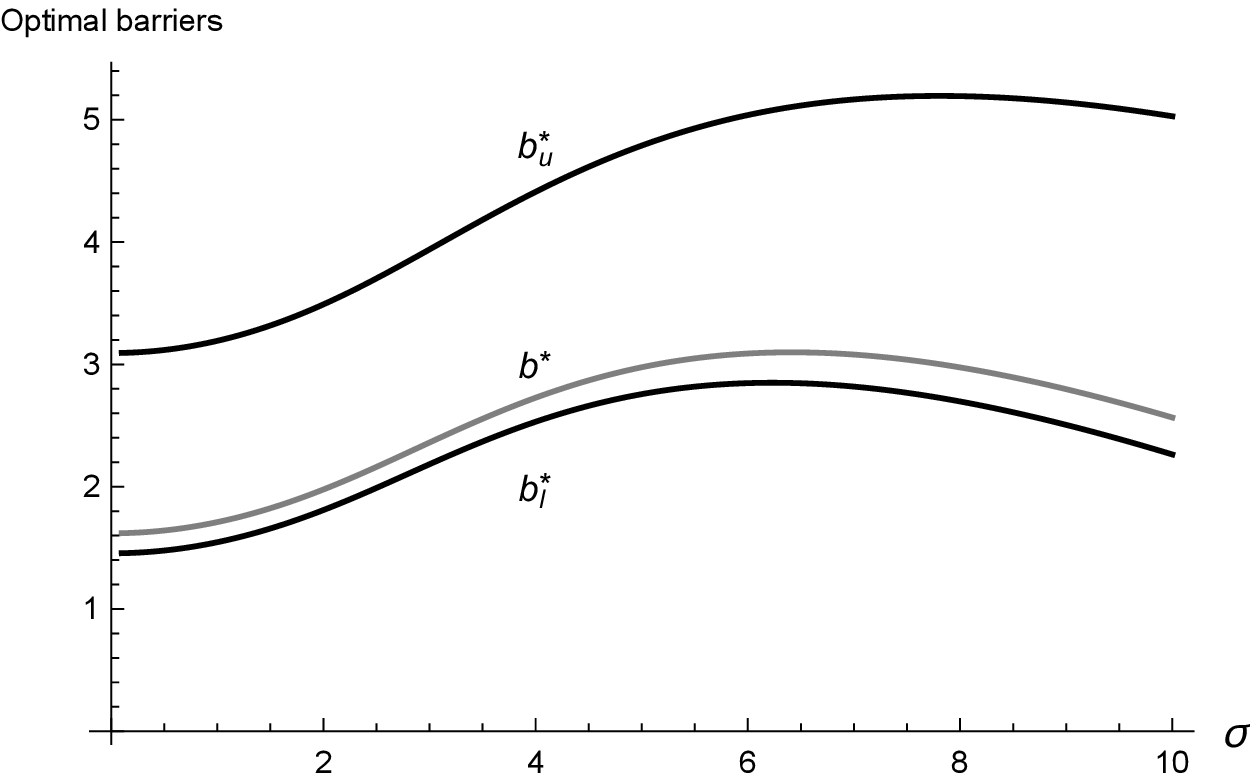}
	\subcaption[fourth caption.]{Impact of $\sigma$ on the barriers}\label{fig.sen.sigma}
\end{minipage}	
\\
	\begin{minipage}{0.3\textwidth}
		\centering
		\includegraphics[width=1\textwidth]{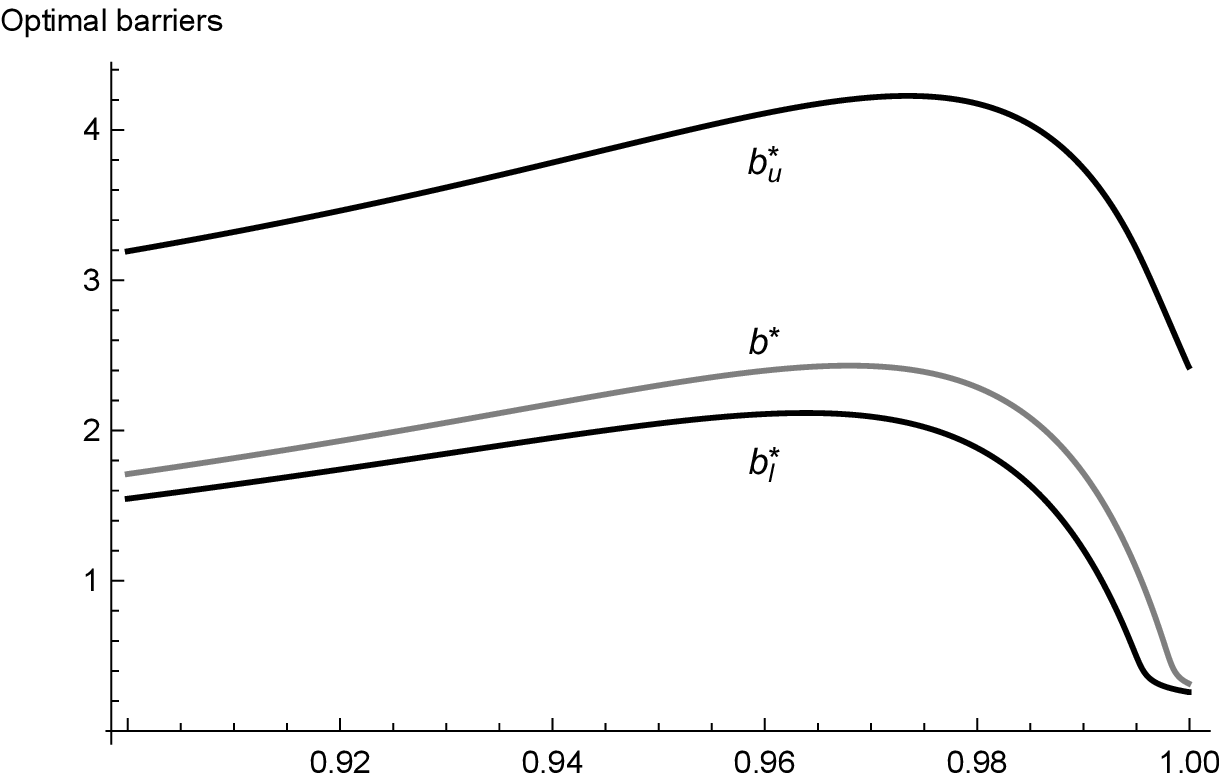}
		\subcaption[third caption.]{$x$-axis: Probability of small claims, with ratio between claims expectations $M$ fixed}\label{fig.sen.muP}
	\end{minipage}
	~~\begin{minipage}{0.3\textwidth}
		\centering
		\includegraphics[width=1\textwidth]{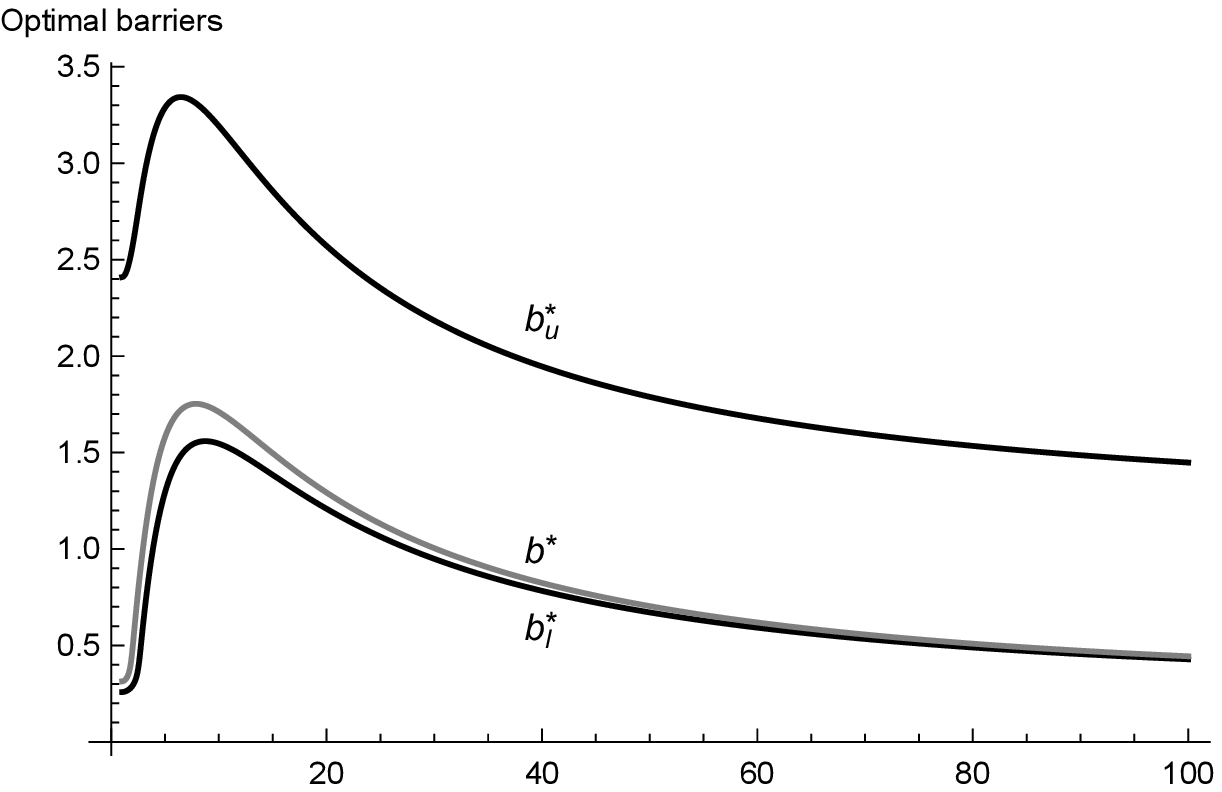}
		\subcaption[fourth caption.]{$x$-axis: Ratio $M$ between claims expectations, with Probability of small claims fixed}\label{fig.sen.muM}
	\end{minipage}	
	~~\begin{minipage}{0.3\textwidth}
	\centering
	\includegraphics[width=1\textwidth]{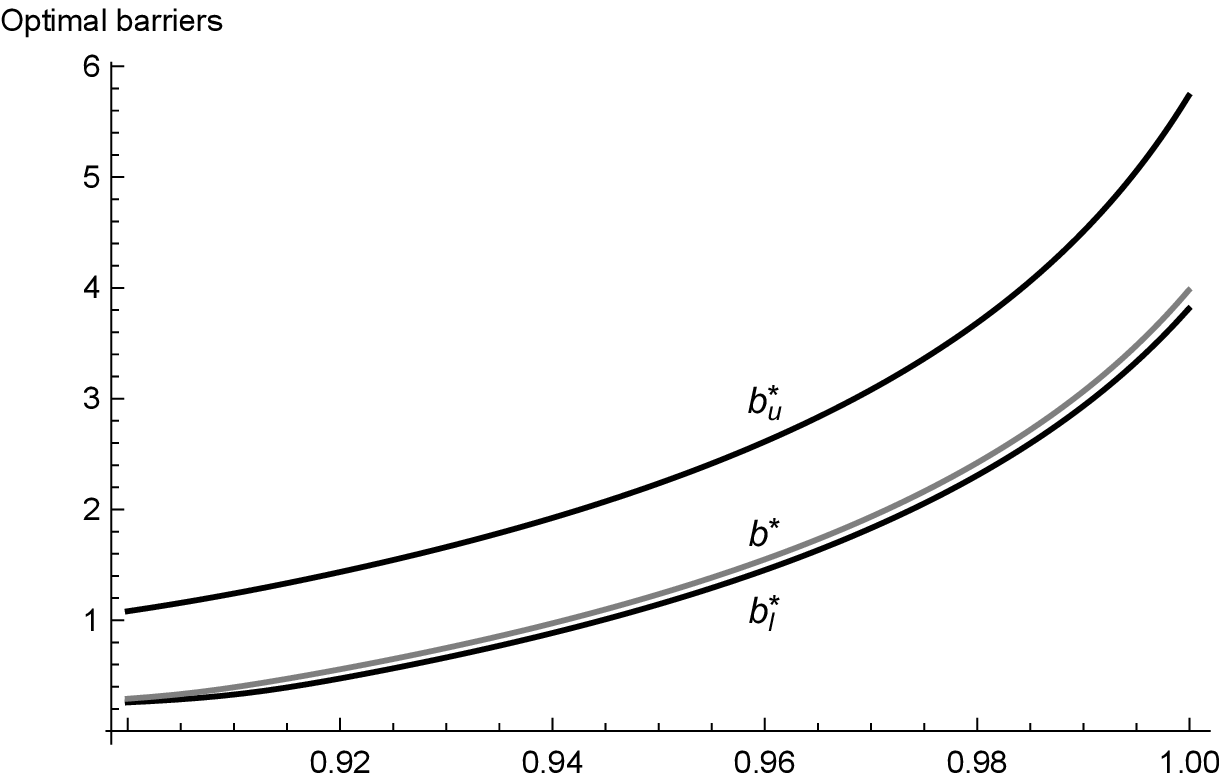}
	\subcaption[fourth caption.]{$x$-axis: Probability of small claims, with expected size of small claims fixed}\label{fig.sen.muE}
\end{minipage}	
	\\
		\begin{minipage}{0.3\textwidth}
		\centering
		\includegraphics[width=1\textwidth]{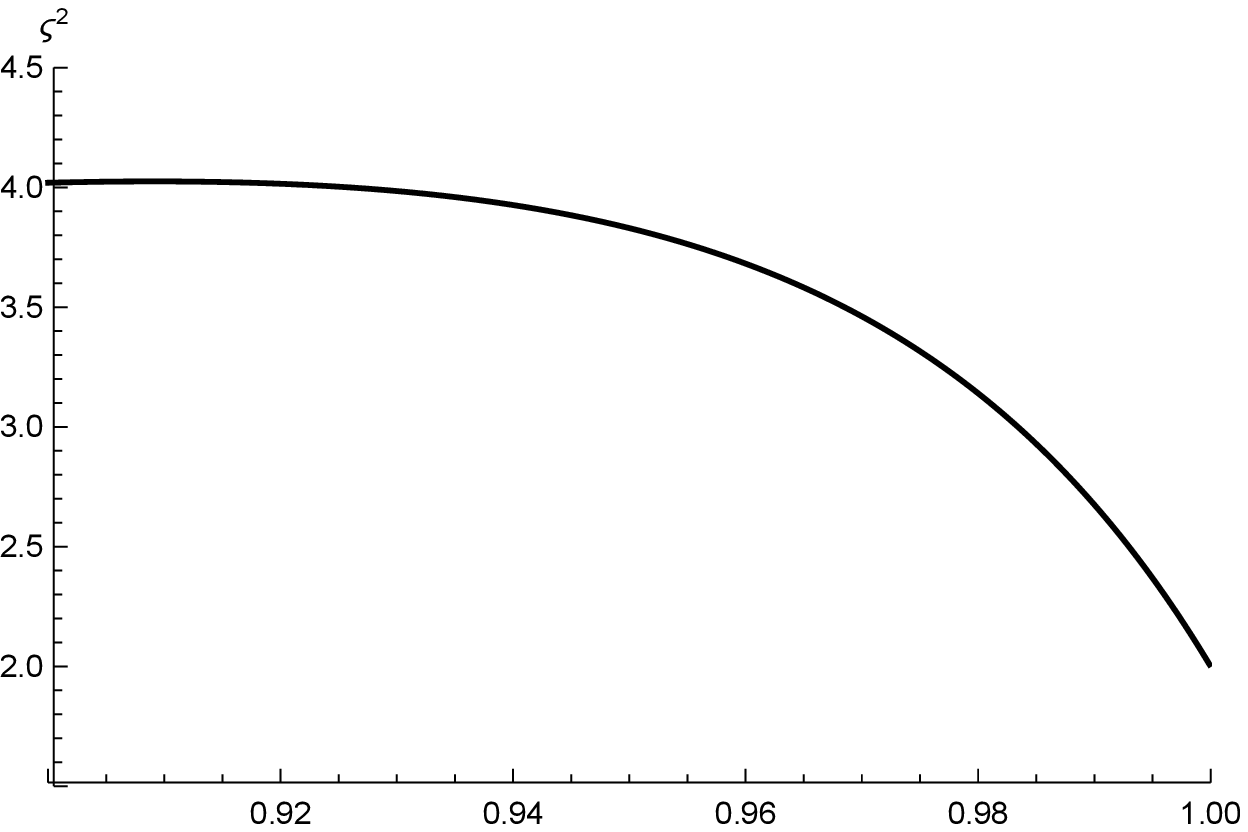}
		\subcaption[first caption.]{$x$-axis: Probability of small claims, with ratio between claims fixed; $y$-axis: overall variability $\varsigma^2$}\label{fig.sen.VarP}
	\end{minipage}%
	~~\begin{minipage}{0.3\textwidth}
		\centering
		\includegraphics[width=1\textwidth]{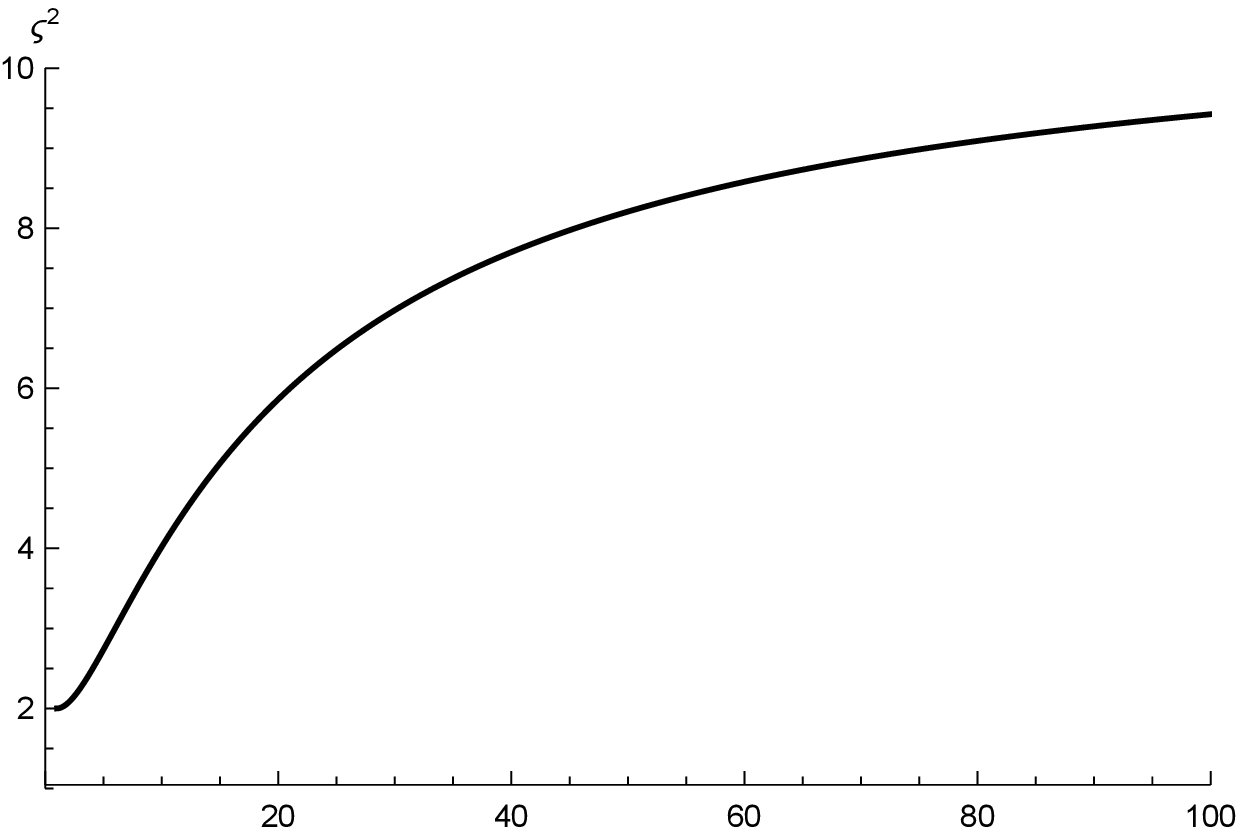}
		\subcaption[second caption.]{$x$-axis: Ratio between claims $M$, with Probability of small claims fixed; $y$-axis: overall variability $\varsigma^2$}\label{fig.sen.VarM}
	\end{minipage}%
	~~\begin{minipage}{0.3\textwidth}
		\centering
		\includegraphics[width=1\textwidth]{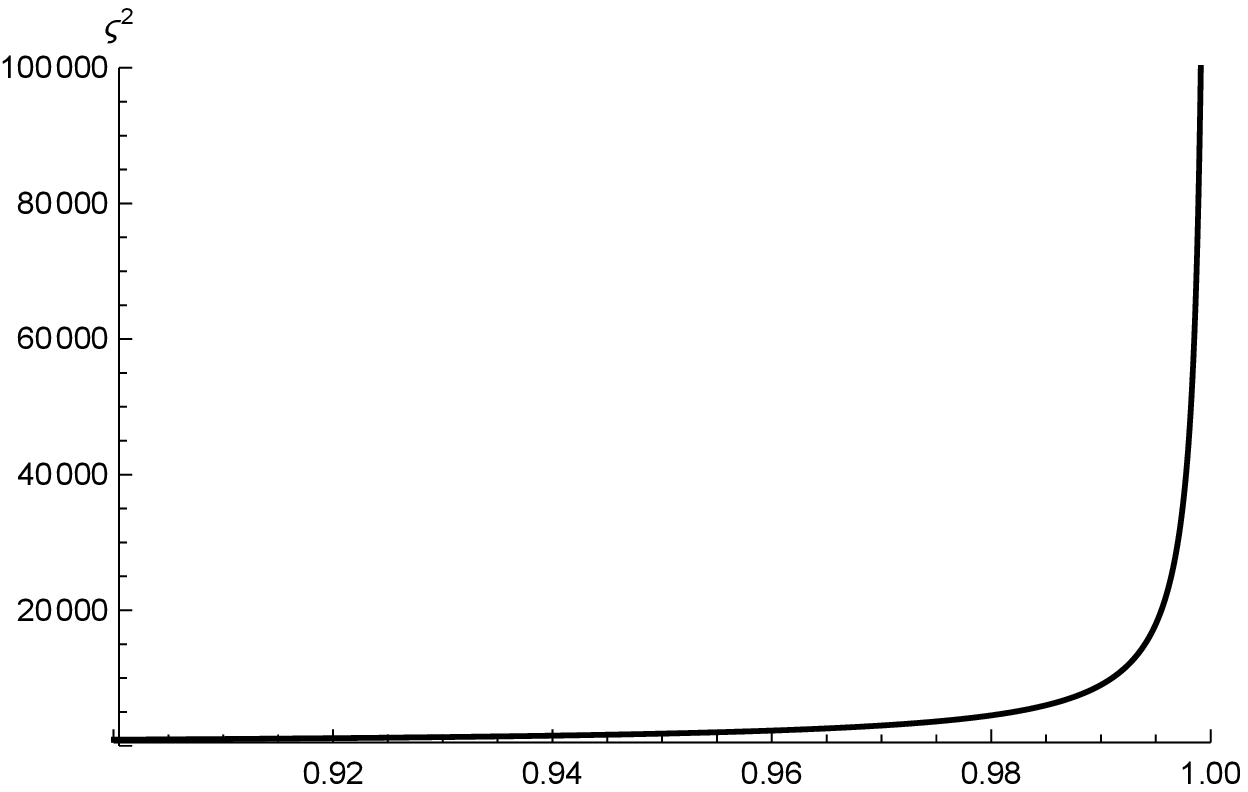}
		\subcaption[fourth caption.]{$x$-axis: Probability of small claims, with expected size of small claims fixed; $y$-axis: overall variability $\varsigma^2$}\label{fig.sen.VarE}
	\end{minipage}	
	
	\caption{Impact of surplus variability on the optimal barriers} \label{fig.sen}
\end{figure}

Figure \ref{fig.sen} shows the impact of the changes of parameters ($\sigma$, $p_1$ and $M$) on the optimal barrier levels ($b_l^*$, $b^*$ and $b_u^*$), where the variability measure $\varsigma^2$ corresponding to the changes are also plotted. 

Figure \ref{fig.sen.sigma} exhibits an increasing then decreasing behaviour. This is because the increased variability would induce cautiousness at first and then deem to be too high for the business to be sustainable, leading eventually to an optimal liquidation at first opportunity. 

For the adjustments of the jumps, we hold the overall expected claim amount per unit time $\mu$  fixed and adjust the parameters accordingly. 
The left column (Figures \ref{fig.sen.muP},\ref{fig.sen.VarP}) adjusts $p_1$, the probabilities of having small jumps (where $\beta_1$ and $\beta_2$, the expected sizes of small and large claims are adjusted accordingly so that their ratio $M$, as well as $\mu$, are fixed). As we see in Figure \ref{fig.sen.VarP}, when the probability of the occurrence of large jumps decreases, the variability of the process decreases. However, barriers in (Figure \ref{fig.sen.muP}) are increasing then decreasing, as a trade-off between occurrence of large jumps (which is decreasing) and their size (which is increasing) operates.

The middle column (Figures \ref{fig.sen.muM},\ref{fig.sen.VarM}) adjusts $M$, the ratio between the expected sizes of the large and small jumps (where $\beta_1$ and $\beta_2$, the expected sizes of small and large claims are adjusted accordingly so that $\mu$ is fixed). In Figure \ref{fig.sen.VarM}, when the ratio between the large and small claims increases, the variability ($\varsigma^2$) of the process increases to a limit. The barriers in Figure \ref{fig.sen.muM} (and beyond) seem to display a convergent behaviour which agrees with Figure \ref{fig.sen.VarM}.

For another comparison, we increase the magnitude of the extreme events while decreases its probability of occurrence in Figures \ref{fig.sen.muE} and \ref{fig.sen.VarE}. To achieve this, in the right column (Figures \ref{fig.sen.muE},\ref{fig.sen.VarE}) we fix the expected value of the small jumps ($1/\beta_1$), decreases the probability of large claims ($p_2$) but at the same time increases its expected value ($1/\beta_2$). We keep the overall expected claims ($\mu$) constant for a fair comparison. It is remarkable that the barriers don't seem to decline to a liquidation-at-first opportunity, even though $\varsigma^2$ becomes very large. Here scarcity of large events seem to overpower the size of the events, even though optimal barriers are still increasing.

\subsection{The impact of the surplus process parameters on the optimal barriers}\label{sec.103}

\begin{figure}[htb]
	\centering

	\begin{minipage}{0.45\textwidth}
		\centering
		\includegraphics[width=1\textwidth]{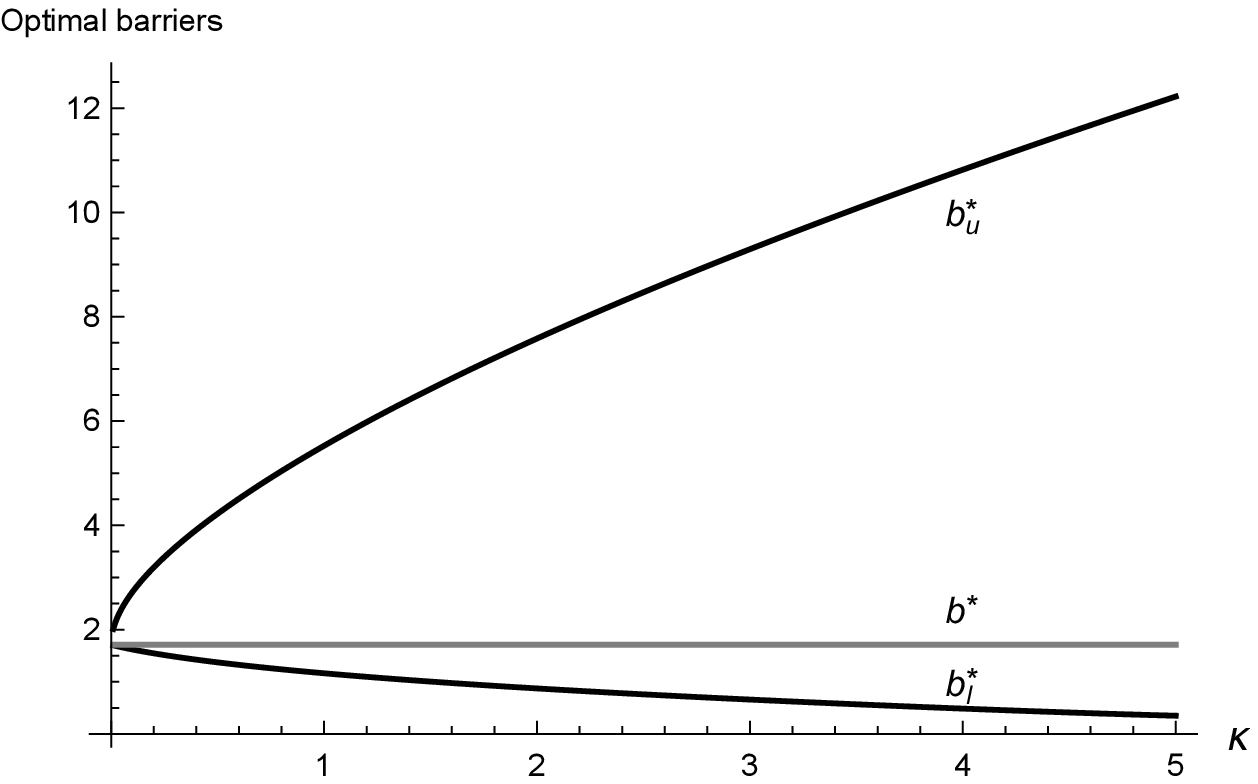}
		\subcaption[first caption.]{$\kappa$}\label{fig.sen.kappa}
	\end{minipage}%
	~~\begin{minipage}{0.45\textwidth}
		\centering
		\includegraphics[width=1\textwidth]{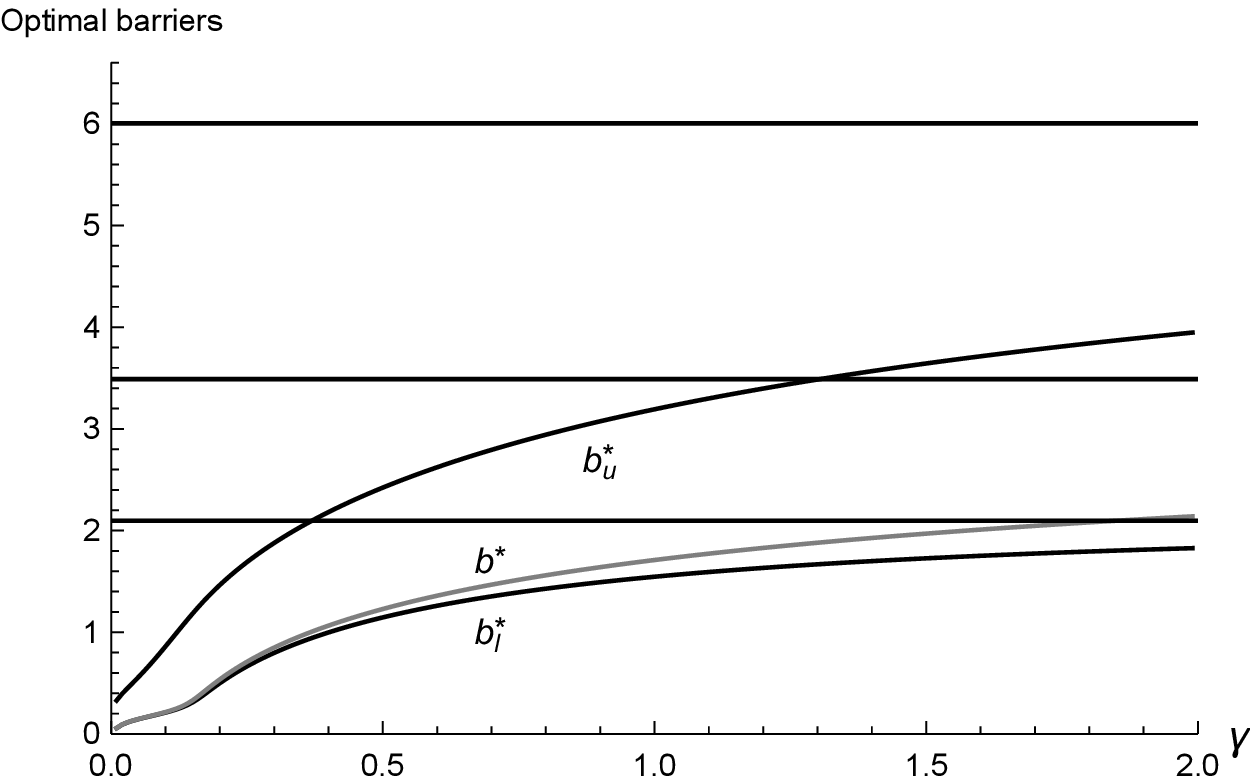}
		\subcaption[second caption.]{$\gamma$}\label{fig.sen.gamma}
	\end{minipage}%
	\\
		\begin{minipage}{0.45\textwidth}
		\centering
		\includegraphics[width=1\textwidth]{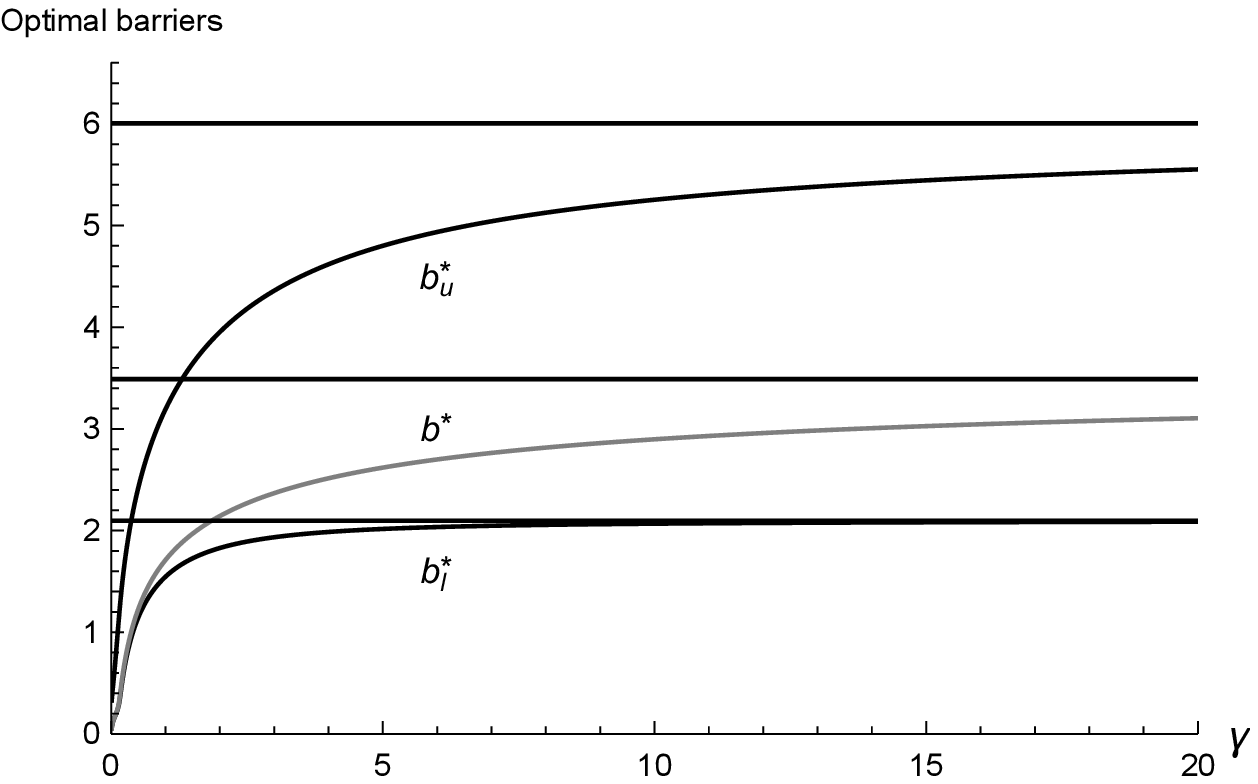}
		\subcaption[second caption.]{$\gamma$ (zoom out)}\label{fig.sen.gamma2}
	\end{minipage}%
	~~\begin{minipage}{0.45\textwidth}
		\centering
		\includegraphics[width=1\textwidth]{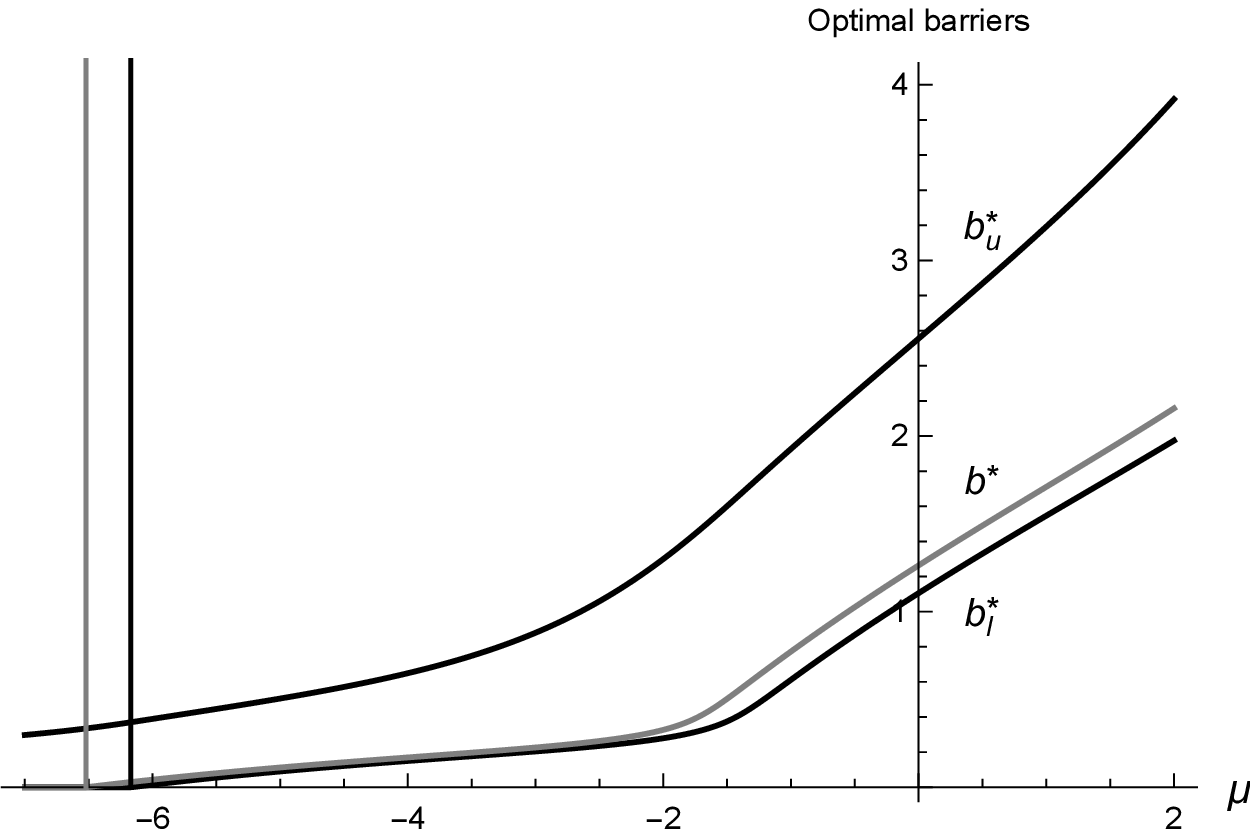}
		\subcaption[second caption.]{$\mu$ (due to change in $c$)}\label{fig.sen.mu}
	\end{minipage}%
	\caption{Sensitivities to parameters (II): Others} \label{fig.sen2}
\end{figure}

Figure \ref{fig.sen2} describes the sensitivities of the barriers to changes in the level of fixed transaction costs, the frequencies of dividend payment opportunities and the premium rate $c$. Figure \ref{fig.sen.kappa} shows that when the fixed transaction costs increase, the optimal periodic barriers $b_u^*$ and $b_l^*$ are moving further away from the periodic barrier $b^*$. This is consistent with the results in the literature \citep*[e.g.][]{BaKyYa13}. 

Figures \ref{fig.sen.gamma}, \ref{fig.sen.gamma2} show the monotonic increase in the optimal barriers with increasing dividend payment frequency $\gamma$. Ultimately they will converge to the barriers when dividends can be paid at any time. It is quite interesting that when $\gamma$ is small, the lower optimal barrier $b_l^*$ behaves similarly to the optimal barrier without the fixed transaction costs. This suggests that the fixed transaction costs are mainly compensated by the upper optimal barrier $b_u^*$, as $b_l$ cannot go below 0, lest a liquidation at first opportunity occurs, which is not optimal in those cases unless $gamma$ is extremely small($<0.002$).

Finally, Figure \ref{fig.sen.mu} plots the change of the optimal barriers corresponding to the changes of the expected gain per unit time $\mu$, solely due to the change of the premium rate $c$. Interestingly, the lower barriers $b_l^*$ and $b^*$ (when there is no transaction costs) are hardly zero unless $\mu$ is too negative. This displays a very different behaviour compared to the case when the surplus process is spectrally positive (i.e. deterministic costs and random gain) where the optimal lower barrier is zero as long as the business is non-profitable indicated by a non-positive $\mu$. This is because in our case, as long as the premium rate $c$ is positive, there is a benefit in having a small but positive surplus as the surplus ``pushes up'' in-between downwards jumps there are chances that no claims arrive before the next jump. On the other hand, once $c$ becomes too small it makes sense to liquidate at first opportunity. This happens on the solid black vertical line. The threshold in absence of fixed transaction costs is different and is illustrated with a solid grey vertical line, which is lower due to absence of transaction costs.

\section{Conclusion}\label{section.conclusion}
In this paper, we determined the form of the optimal periodic dividend strategy when there are fixed transaction costs, when the dividend decisions are Poissonian, and where  the underlying model is a spectrally negative L\'evy process. Extending papers such as \citet*{PeYa16b}, we were able to compute the value function of a periodic $(b_u,b_l)$ strategy concisely in terms of scale functions.

Using an additional assumption that the L\'evy measure has completely monotonic density and imposed the same 2 conditions as \citet*{AvLaWo20d} on the parameters $b_u$ and $b_l$, it was then confirmed that the periodic $({b}_u^*,{b}_l^*)$ (exists and) is optimal.

\section*{Acknowledgments}

%This paper was presented at the Australasian Actuarial Education and Research Symposium in December 2017 in Sydney, Australia \BA{was it?}\HL{Not presented yet}. The authors are grateful for constructive comments received from colleagues who attended those events.

This research was  supported under Australian Research Council's Linkage (LP130100723) and Discovery (DP200101859) Projects funding schemes. Hayden Lau acknowledges financial support from an Australian Postgraduate Award and supplementary scholarships provided by the UNSW Australia Business School. The views expressed herein are those of the authors and are not necessarily those of the supporting organisations. 

\section*{References}

\bibliographystyle{elsarticle-harv}

\bibliography{libraries}

\appendix
\section{Value function of a $(b_u,b_l)$ strategy in SNLP}\label{appendix.value.snlp}

We adapt the approach in \citet*{PeYa16b} where the scale of the process is shifted $b_u$ units down, i.e. there are numbers $a<d<-\kappa<0<b$ such that whenever the process $X$ is above or at $0$ at exponential times ($t=e_\gamma$), it jumps to $d$. We denote the difference (the dividend) is denoted as $dL_\gamma^d(t)$ (with the aggregate version being $L_\gamma^d(t)$) and the process is killed upon exiting the interval $[a,b]$.

For $a<0$, we define for any measurable function $f$
\begin{equation}
\Ma{f(x)}:= f(x-a)+\gamma \int_{0}^{x}\Wq(x-y)f(y-a)dy
\end{equation}
and 
\begin{equation}
\Wa(x):=\Ma\Wq(x).
\end{equation}
In particular, we have $\Wa(0)=\Wq(-a)$.
For $a\leq x\leq b$, we have the standard 2 sided exiting identities
\begin{equation}
\Ex(e^{-\delta \tau^+_b};\tau_b^+<\tau_a^-)=\frac{\Wq(x-a)}{\Wq(b-a)},
\end{equation}
\begin{equation}
\Ex(e^{-\delta \tau^-_a};\tau_b^+>\tau_a^-)=\Zq(x-a,\theta)-\Zq(b-a,\theta)\frac{\Wq(x-a)}{\Wq(b-a)}.
\end{equation}
In addition, from equation (2.21), (5.1) and Lemma 5.2 in \cite*{PeYa16b}, we have 
\begin{equation}\label{eqt.2.21}
\Ex(e^{-(\gamma+\delta)\tau_0^-}\Wq(X(\tau_0^-)-a);\tau_0^-<\tau^+_b)= \Wa(x)-\frac{\Wqr(x)}{\Wqr(b)}\Wa(b),
\end{equation}
\begin{equation}\label{eqt.5.1}
\Ex(e^{-\delta e_\gamma};e_\gamma<\tau_b^+\wedge\tau_0^-) = \gamma(\frac{\Wqr(x)}{\Wqr(b)}\Wb(b)-\Wb(x)),
\end{equation}
and
\begin{equation}\label{eqt.lemma5.2}
\Ex(e^{-\delta e_\gamma}X(e_\gamma);e_\gamma<\tau_b^+\wedge\tau_0^-) = \gamma(\frac{\Wqr(x)}{\Wqr(b)}\Wbb(b)-\Wbb(x)),
\end{equation}
where $e_\gamma$ is an independent exponential random variable with mean $1/\gamma$.

By denoting our quantity of interest
\begin{equation}
f_{a,b,d}(x):=\Ex(\int_{0}^{\tau_b^+(\gamma)\wedge\tau_a^-(\gamma)}e^{-\delta t}dL_\gamma^d(t))
\end{equation}
and using the strong Markov property, we have for $x\leq 0$,
\begin{equation*}
f_{a,b,d}(x) = \mathbb{E}_x(e^{-\delta \tau^+_0};\tau_0^+<\tau_a^-)f_{a,b,d}(0)=\frac{\Wq(x-a)}{\Wq(-a)}f_{a,b,d}(0).
\end{equation*}
Hence, for $x\geq 0$, we have by the strong Markov property
\begin{align}
f_{a,b,d}(x) =~& \Ex(e^{-\delta e_\gamma}X(e_\gamma);e_\gamma<\tau_0^-\wedge\tau_b^+)+\Ex(e^{-\delta e_\gamma};e_\gamma<\tau_0^-\wedge\tau_b^+)(f_{a,b,d}(d)-d-\kappa)\nonumber\\
&~+\Ex(e^{-(\gamma+\delta)\tau_0^-}\Wq(X(\tau_0^-)-a);\tau_0^-<\tau_b^+)\frac{f_{a,b,d}(0)}{\Wq(-a)}\nonumber\\
=~& \Ex(e^{-\delta e_\gamma}X(e_\gamma);e_\gamma<\tau_0^-\wedge\tau_b^+)\nonumber\\
&~+\Ex(e^{-\delta e_\gamma};e_\gamma<\tau_0^-\wedge\tau_b^+)(\frac{\Wq(d-a)}{\Wq(-a)}f_{a,b,d}(0)-d-\kappa)\nonumber\\
&~+\Ex(e^{-(\gamma+\delta)\tau_0^-}\Wq(X(\tau_0^-)-a);\tau_0^-<\tau_b^+)\frac{f_{a,b,d}(0)}{\Wq(-a)}.\nonumber
\end{align}
Hence, via equations (\ref{eqt.lemma5.2}), (\ref{eqt.5.1}) and (\ref{eqt.2.21}), we have 
\begin{align}
f_{a,b,d}(x)=~&
\gamma(\frac{\Wqr(x)}{\Wqr(b)}\Wbb(b)-\Wbb(x))\nonumber\\
&~+\gamma(\frac{\Wqr(x)}{\Wqr(b)}\Wb(b)-\Wb(x))\times(\frac{\Wq(d-a)}{\Wq(-a)}f_{a,b,d}(0)-d-\kappa)\nonumber\\
&~+(\Wa(x)-\frac{\Wqr(x)}{\Wqr(b)}\Wa(b))\frac{f_{a,b,d}(0)}{\Wq(-a)}.\nonumber
\end{align}

Since we are only interested in the case when $b\rightarrow\infty$, we should take the limit before calculating $f_{a,b,d}(0)$. By using (\ref{eqt.lim1}), (\ref{eqt.lim2}) and (\ref{eqt.lim3}), we get
\begin{align}\label{eqt.fad}
f_{a,d}(x):=~&\lim_{b\rightarrow\infty}f_{a,b,d}(x)\nonumber\\
=~&\gamma(\frac{1}{\phiqr^2}\Wqr(x)-\Wbb(x))\nonumber\\
&~+\gamma(\frac{1}{\phiqr}\Wqr(x)-\Wb(x))\times(\frac{\Wq(d-a)}{\Wq(-a)}f_{a,d}(0)-d-\kappa)\nonumber\\
&~+(\Wa(x)-\Wqr(x)\Zqr(-a))\frac{f_{a,d}(0)}{\Wq(-a)}\nonumber\\
=~&\gamma(\frac{1}{\phiqr^2}\Wqr(x)-\Wbb(x))+\gamma(-d-\kappa)(\frac{1}{\phiqr}\Wqr(x)-\Wb(x))\nonumber\\
&~+\frac{f_{a,d}(0)}{\Wq(-a)}\Big\{r\Wq(d-a)(\frac{1}{\phiqr}\Wqr(x)-\Wb(x))+\Wa(x)-\Wqr(x)\Zqr(-a)\Big\}.
\end{align}

When $X$ is of bounded variation, $\Wqr(0)>0$, and hence we put $x=0$ in equation (\ref{eqt.fad}) to obtain
\begin{align}
f_{a,d}(0)=~& \gamma\frac{1}{\phiqr^2}\Wqr(0)+\gamma(-d-\kappa)\frac{1}{\phiqr}\Wqr(0)\nonumber\\
&~+\frac{f_{a,d}(0)}{\Wq(-a)}\Big\{\gamma\Wq(d-a)\frac{1}{\phiqr}\Wqr(0)+\Wq(-a)-\Wqr(0)\Zqr(-a)\Big\}\nonumber\\
=~&\frac{\Wqr(0)}{\phiqr}(\gamma(\frac{1}{\phiqr}-d-\kappa))+\frac{\Wqr(0)}{\phiqr}\frac{f_{a,d}(0)}{\Wq(-a)}\Big\{\gamma\Wq(d-a)-\phi\Zqr(-a)\Big\}+f_{a,d}(0),\nonumber
\end{align}
or
\begin{equation}\label{eqt.f0}
\frac{f_{a,d}(0)}{\Wq(-a)}=\frac{\gamma(\frac{1}{\phiqr}-d-\kappa)}{\phiqr\Zqr(-a)-\gamma\Wq(d-a)}.
\end{equation}
When $X$ is of unbounded variation, by denoting the event
\begin{equation}
E_B:=\{e_\gamma<\tau_0^-\}\cup \{\tau_b^+<\zeta\}\cup\{\tau_a^-<\zeta\},
\end{equation}
where $\zeta$ is the lifetime of an excursion away from $0$, $a\leq 0\leq b$. In addition, we denote $\T$ the first time an excursion in the event $E_B$ occurs and $l_\T$ the starting point of the excursion, i.e.
\begin{equation}
l_\T:=\sup\{t<\T:X(t)=0\}.
\end{equation}

From equation (6.6) in \citet*{PeYa16b}, by denoting $\bar{T}^-_0=l_{T_{E_B}}+\tau^-_0\circ \vartheta_{l_{T_{E_B}}}$, where $\vartheta$ is the shifting operator, we have 
\begin{equation}\label{eqt.6.6.1}
{\mathbb{E}}(e^{-\delta(l_\T+e_\gamma)}X(l_\T+e_\gamma);l_\T+e_\gamma<\bar{T}_0^-\wedge\tau_b^+) = \gamma\frac{\Wq(-a)}{\Wa(b)}\Wbb(b),
\end{equation}
\begin{equation}\label{eqt.6.6.2}
{\mathbb{E}}(e^{-\delta(l_\T+e_\gamma)};l_\T+e_\gamma<\bar{T}_0^-\wedge\tau_b^+) = \gamma\frac{\Wq(-a)}{\Wa(b)}\Wb(b).
\end{equation}

Regarding the limiting behaviour when $b\rightarrow\infty$, we have
\begin{equation}\label{eqt.lim1}
\lim_{b\rightarrow\infty}\frac{\Wbb(b)}{\Wqr(b)}=\frac{1}{\phiqr^2},
\end{equation}
\begin{equation}\label{eqt.lim2}
\lim_{b\rightarrow\infty}\frac{\Wb(b)}{\Wqr(b)}=\frac{1}{\phiqr},
\end{equation}
and
\begin{equation}\label{eqt.lim3}
\lim_{b\rightarrow\infty}\frac{\Wa(b)}{\Wqr(b)}=\Zqr(-a).
\end{equation}

On the other hand, when $X$ is of unbounded variation, we proceed as in section 6 in \cite*{PeYa16b} to yield
\begin{align}
f_{a,b,d}(0)=~&\mathbb{E}(e^{-\delta(l_\T+e_\gamma)}X(l_\T+e_\gamma);l_\T+e_\gamma<\bar{T}_0^-\wedge\tau_b^+)\nonumber\\
&~+\mathbb{E}(e^{-\delta(l_\T+e_\gamma)};l_\T+e_\gamma<\bar{T}_0^-\wedge\tau_b^+) \times (f_{a,b,d}(d)-d-\kappa)\nonumber\\
=~&\gamma\frac{\Wq(-a)}{\Wa(b)}\Wbb(b)+\gamma\frac{\Wq(-a)}{\Wa(b)}\Wb(b)(f_{a,b,d}(d)-d-\kappa).\nonumber
\end{align}
By passing the limit $b\rightarrow\infty$, we get
\begin{align}
f_{a,d}(0) =~& \frac{\gamma\Wq(-a)}{\phi^2 \Zqr(-a) }+\frac{\gamma\Wq(-a)}{\phi\Zqr(-a)}(\frac{\Wq(d-a)}{\Wq(-a)}f_{a,b,d}(0)-d-\kappa)\nonumber\\
=~&  \frac{ \gamma\Wq(-a)}{\phiqr \Zqr(-a) }(\frac{1}{\phiqr}-d-\kappa)+\frac{\gamma\Wq(d-a)}{\phiqr\Zqr(-a)}f_{a,b,d}(0),\nonumber
\end{align}
or
\begin{align*}
\phiqr\Zqr(-a)f_{a,d}(0) =~& \gamma\Wq(-a)(\frac{1}{\phiqr}-d-\kappa)+\gamma\Wq(d-a)f_{a,d}(0),
\end{align*}
which yields
\begin{equation*}
\frac{f_{a,d}(0)}{\Wq(-a)}=\frac{\gamma(\frac{1}{\phiqr}-d-\kappa)}{\phiqr\Zqr(-a)-\gamma\Wq(d-a)},
\end{equation*}
the same as (\ref{eqt.f0}).

Thus, plugging (\ref{eqt.f0}) back to (\ref{eqt.fad}), we obtain
\begin{align}\label{value.kappa.progress}
f_{a,d}(x)=~&\gamma(\frac{1}{\phiqr^2}\Wqr(x)-\Wbb(x))+\gamma(-d-\kappa)(\frac{1}{\phiqr}\Wqr(x)-\Wb(x))\nonumber\\
&~+\frac{f_{a,d}(0)}{\Wq(-a)}\Big\{\gamma\Wq(d-a)(\frac{1}{\phiqr}\Wqr(x)-\Wb(x))+\Wa(x)-\Wqr(x)\Zqr(-a)\Big\}\nonumber\\
=~&\frac{\Wqr(x)}{\phiqr}\gamma(\frac{1}{\phiqr}-d)-\gamma(\Wbb(x)-d \Wb(x)-\kappa\Wb(x))\nonumber\\
&~+\frac{f_{a,d}(0)}{\Wq(-a)}\Big\{\Wa(x)-\gamma\Wq(d-a)\Wb(x)\Big\}\nonumber\\
&~+\frac{f_{a,d}(0)}{\Wq(-a)}\Big\{\gamma\Wq(d-a)-\phiqr\Zqr(-a)\Big\}\frac{1}{\phiqr}\Wqr(x)\nonumber\\
=~&\frac{\Wqr(x)}{\phiqr}\gamma(\frac{1}{\phiqr}-d-\kappa)-\gamma(\Wbb(x)-d \Wb(x)-\kappa\Wb(x))\nonumber\\
&~+\frac{f_{a,d}(0)}{\Wq(-a)}\Big\{\Wa(x)-\gamma\Wq(d-a)\Wb(x)\Big\}\nonumber\\
&~-\gamma(\frac{1}{\phiqr}-d-\kappa)\frac{\Wqr(x)}{\phiqr},
\end{align}
or
\begin{align}
f_{a,d}(x)=~&\frac{\gamma(\frac{1}{\phiqr}-d-\kappa)}{\phiqr\Zqr(-a,)-\gamma\Wq(d-a)}\Big(\Wa(x)-\gamma\Wq(d-a)\Wb(x)\Big)\nonumber\\&-\gamma\Big(\Wbb(x)-d \Wb(x)-\kappa\Wb(x)\Big).\label{value.kappa}
\end{align}

Next, the smoothness condition ((\ref{smooth.condition}) shifted $b_u$ units downward)
\begin{equation}
f_{a,d}(0)=f_{a,d}(d)-d-\kappa
\end{equation}
can be rewriten as
\begin{equation}
-d-\kappa = \frac{f_{a,d}(0)}{\Wq(-a)}(\Wq(-a)- \Wq(d-a)).
\end{equation}
Hence, in view of (\ref{value.kappa.progress}), we have
\begin{align}\label{value.kappa.progress2}
&f_{a,d}(x)\nonumber\\ =~& \frac{f_{a,d}(0)}{\Wq(-a)}\Big(\Wa(x)-\gamma\Wq(d-a)\Wb(x)\Big)-\gamma\Big(\Wbb(x)-d \Wb(x)-\kappa\Wb(x)\Big)\nonumber\\
=~&\frac{f_{a,d}(0)}{\Wq(-a)}\Big(\Wa(x)-\gamma\Wq(d-a)\Wb(x)\Big)-\gamma(-d-\kappa)\Wb(x)-\gamma\Wbb(x)\nonumber\\
=~&\frac{f_{a,d}(0)}{\Wq(-a)}\Big(\Wa(x)-\gamma\Wq(d-a)\Wb(x)\Big)-\gamma\frac{f_{a,d}(0)}{\Wq(-a)}(\Wq(a)- \Wq(d-a))\Wb(x)-\gamma\Wbb(x)\nonumber\\
=~&\frac{f_{a,d}(0)}{\Wq(-a)}\Big(\Wa(x)-\gamma\Wq(d-a)\Wb(x)+\gamma\Wq(d-a)\Wb(x)-\gamma\Wq(-a)\Wb(x)\Big)-\gamma\Wbb(x)\nonumber\\
=~&\frac{f_{a,d}(0)}{\Wq(-a)}\Big(\Wa(x)-\gamma\Wq(-a)\Wb(x)\Big)-\gamma\Wbb(x).
\end{align}
Now, plugging in $x=0$ in (\ref{value.kappa}), we get
\begin{equation*}
f_{a,d}(0) = \frac{\gamma(\frac{1}{\phi}-d-\kappa)}{\phiqr\Zqr(-a)-\gamma\Wq(d-a)}\Wq(-a),
\end{equation*} 
or
\begin{align*}
\frac{f_{a,d}(0)}{\Wq(-a)}=~&\frac{\frac{\gamma}{\phiqr}+\gamma\frac{f_{a,d}(0)}{\Wq(-a)}(\Wq(-a)- \Wq(d-a))}{\phiqr\Zqr(-a)-\gamma\Wq(d-a)},
\end{align*}
which results in 
\begin{equation}
\frac{f_{a,d}(0)}{\Wq(-a)} = \frac{\gamma}{\phiqr}\frac{1}{\phiqr\Zqr(-a)-\gamma\Wq(-a)}.
\end{equation}
Plugging this back to (\ref{value.kappa.progress2}), we retrive the value function of a periodic barrier strategy (at barrier level $a$), which appears in \cite*{PeYa16b}.

Finally, we shall perform a horizontal transformation of the axis such that we have $0<\kappa<g = b_u-b_l$, $0<b_u,b_l$, the process is ruin when downcrossing $0$ and whenever at Poissonian times the process is above $b_u$ it jumps to $b_l$. This gives the value function of a periodic $(b_u,b_l)$ strategy as desired.

\begin{comment}

\vspace{2cm}

\noindent \sc{Benjamin Avanzi} \normalfont(Corresponding Author)

\noindent \emph{School of Risk and Actuarial Studies}

\noindent \emph{Australian School of Business}

\noindent \emph{University of New South Wales}

\noindent \emph{Sydney NSW 2052}

\noindent \emph{Australia}

\noindent \emph{E-Mail: b.avanzi@unsw.edu.au}

\vspace{.5cm}

\noindent \sc{Vincent Tu}

\noindent \emph{School of Risk and Actuarial Studies}

\noindent \emph{Australian School of Business}

\noindent \emph{University of New South Wales}

\noindent \emph{Sydney NSW 2052}

\noindent \emph{Australia}

\noindent \emph{E-Mail: v.tu@unsw.edu.au}

\vspace{.5cm}

\noindent \sc{Bernard Wong}

\noindent \emph{School of Risk and Actuarial Studies}

\noindent \emph{Australian School of Business}

\noindent \emph{University of New South Wales}

\noindent \emph{Sydney NSW 2052}

\noindent \emph{Australia}

\noindent \emph{E-Mail: bernard.wong@unsw.edu.au}

\end{comment}

\end{document}